\theoremstyle{plain}
\newtheorem{teo}{Theorem}[section]
\newtheorem{prop}[teo]{Proposition}
\newtheorem{cor}[teo]{Corollary}
\newtheorem{lema}[teo]{Lemma}
\theoremstyle{definition}
\newtheorem{defs}[teo]{Definition}
\newtheorem{rmk}[teo]{Remark}
\def\Z{\mathbb{Z}}
\def\C{\mathbb{C}}
\def\N{\mathbb{N}}
\def\Q{\mathbb{Q}}
\def\PP{\mathbb{P}}
\def\F{\mathcal F}
\def\O{\mathcal O}
\def\spec{\mathrm{Spec}}
\def\epi{\twoheadrightarrow}
\def\-#1{\overline{#1}}
\def\^#1{\widehat{#1}}
\def\sing{\mathrm{sing}}
\date{} 
\begin{document}

\title{Families of distributions and Pfaff systems under duality}
\author{Federico Quallbrunn}
\email{fquallb@dm.uba.ar}
\address{Departamento de Matem\'atica, FCEyN, Universidad de Buenos Aires, Ciudad Universitaria, Pabell\'on 1, Buenos Aires (Argentina)}
\keywords{Coherent Sheaves, Flat Families, Algebraic Foliations, Moduli Spaces, Kupka Singularities}

\begin{abstract}
A singular distribution on a non-singular variety $X$ can be defined either by a subsheaf $D\subseteq TX$ of the tangent sheaf, or by the zeros of a subsheaf $D^0\subseteq \Omega^1_X $ of $1$-forms, that is a Pfaff system.
Although both definitions are equivalent under mild conditions on $D$, they give rise, in general, to non-equivalent notions of flat families of distributions.
In this work we investigate conditions under which both notions of flat families 
are equivalent.
In the last sections we focus on the case where the distribution is \emph{integrable}, and we use our results to generalize a theorem of Cukierman and Pereira.
\end{abstract}

\maketitle

\section{Introduction}

Among the several motivations for studying moduli problems in geometry there is the study of the topological and geometrical properties 
of differential equations on manifolds.
Such problems are studied in the works of R.Thom,  V.Arnold and Kodaira-Spencer in the 1950's and were an influence in the work of Zariski about deformation of singularities, we refer to the introduction and section 3.10 of \cite{Dema} for a historical account on the subject. 
Algebraic geometric techniques where used in the study of (integrable) differential equations on non-singular algebraic varieties at least since Jouanolou \cite{Jou}, moduli and deformation problems of integrable differential equations where studied by Gomez-Mont in \cite{GM}.
Since then much have been done in the study of the geometry of integrable $1$-forms on algebraic varieties, specially describing singularities of the differential equations they define (see, for instance \cite{Med}). 
Also, much has been done in the determination of irreducible components of the moduli space of integrable $1$-forms (e.g.: \cite{split, rational, LN} and references therein).
\par
Many of these works where carried in a `na\" ive' way, without concerning for representability of functors or formalization of deformation problems. 
When trying to formalize the moduli problem for integrable differential equations in non-singular algebraic varieties, an issue appears at the very beginning: although its equivalent to describe a distribution by its tangent sheaf or by the sheaf of $1$-forms that vanishes on it, both descriptions give rise to inequivalent notions of a flat family of distributions.
Thus we are led to two different moduli problems, one for involutive) sub-sheaves of the tangent sheaf, and the other for (integrable) sub-sheaves of the sheaf of $1$-forms.
In the present work we prove in \cref{proprep1} and \cref{proprep2} that, under suitable assumptions, these two moduli problems are representable  and, in \cref{corobir}, that their irreducible components are moreover birationally equivalent. 
Using this we can generalize previous theorems in the literature as Theorem 1 of \cite{split} and the theorems of \cite{P, P2}.

\medskip
In order to better explain our results we now introduce some notation and definitions.
\newline Let $X$ be a non-singular projective algebraic variety.
A non-singular $k$-dimensional distribution $D$ on a non-singular variety $X$ consist on a $k$-dimensional subspace $D_x\subseteq T_xX$ of the tangent space of $X$ at $x$ varying continuously with $x$. 
This notion can be formalized by saying that a $k$-dimensional distribution is a rank $k$ sub-bundle of the tangent bundle $D\hookrightarrow TX$, and taking $D_x\subseteq T_xX$ to be $D\otimes k(x)$. 
Equivalently, we can say that a $k$-dimensional distribution is determined by a sub-bundle $I_D\hookrightarrow \Omega^1_X$, and take $D_x$ to be $\{ v\in T_xX\ |\ \omega(v)=0, \forall \omega\in I_D\otimes k(x)\}$.

\par When $X$ is an algebraic variety, it is often the case that there are no algebraic sub-bundles of $TX$ or $\Omega^1_X$ of a given rank i.e.: there are no \emph{non-singular} distributions.
Nevertheless, the definition can be readily generalized to allow $D\subseteq TX$ and $I_D\subseteq \Omega^1_X$ to be subsheaves. 
In this way, we describe distributions $D$ which are $k$-dimensional on a dense open subset, but may present singularities along proper subvarieties.
Again, a singular distribution is equivalently defined either by a subsheaf $D\subseteq TX$ or by its annihilator  $I_D\subseteq \Omega^1_X$.
However, as was already observed by Pourcin in \cite{P}, when studying \emph{families} of distributions parametrized by a base scheme $S$ it may happen that while a family of distributions $D\subseteq T_S(X\times S)$ is flat (in the sense that the quotient  $T_S(X\times S)/D$ is a flat sheaf over $S$), its annihilator $I_D\subseteq \Omega^1_{X|S}$ may not be flat.
This gives us two different notions of flat family of (singular) distributions, and therefore two different moduli problems for them.

\par In this work we prove that the above two notions of flatness coincide as long as the singular set of the distribution (endowed with a convenient scheme structure) is also flat over $S$ (\cref{propdual1} and \cref{propdual2}).
We focus on the case of \emph{integrable} distributions, in \cref{proprep1} we give constructions for the moduli space $\mathrm{Inv}^X$ of involutive (in the sense of the Frobenius theorem) subsheaves of $TX$ and in \cref{proprep2} for the moduli space $\mathrm{iPf}^X$ of integrable subsheaf of $\Omega^1_X$ and conclude in \cref{corobir} that taking annihilators defines  a rational map between $\mathrm{Inv}^X$  and $\mathrm{iPf}^X$ that is a birational equivalence in each irreducible component of  $\mathrm{Inv}^X$.
Moreover we give in \cref{papa} a sufficient criterion in terms of the singularity of the foliation to know when an involutive subsheaf $T\F\subseteq TX$ represents a point in the dense open set $U\subseteq \mathrm{Inv}^X$ where taking duals gives an isomorphism with an open set $V\subseteq \mathrm{iPf}^X$. Using this criterion we can generalize the main theorem of \cite{split} by specializing our results to the case where $X=\PP^n$ and $T\F\cong \bigoplus_i \O_{\PP^n}(d_i)$.
\newline
\par In \Cref{example} we show a particular example of a flat family of Pfaff systems being dual to a non-flat family of distributions.
\par In \Cref{prel} we treat some preliminary general notions on sheaves and criterion for flatness that will be useful later. 
\par In \Cref{duales} we study the effect of applying the functor $\mathcal{H}om(-, \O_X)$ to a short exact sequence of sheaves. We also include in this section some observations on exterior powers, that are relevant for the study of distributions of codimension higher than $1$.
 \par \Cref{distypfaff} consists mainly of definitions of families of distributions and Pfaff systems, and related notions.
\par In \Cref{univ} the construction of the moduli spaces of involutive distributions and integrable Pfaff systems is given as subschemes of certain $\mathrm{Quot}$ schemes.
\par In \Cref{duality} the main results of the paper are proven. 
First the singular scheme of a family of distributions is defined, as well as the analogous notion for family of Pfaff systems and it is proven that the singular scheme of a family of distributions is the same as the singular scheme of its dual family (which is a family of Pfaff systems). 
In \Cref{codim1case} the codimension $1$ version of the main result is proven: if the singular scheme of a flat family of codimension-$1$ Pfaff systems is itself flat then the dual family is flat as well.
In \Cref{codimk} an analogous statement is proven for arbitrary codimension. 
In this case, however, flatness of the singular scheme is not enough to assure flatness of the dual family. 
To obtain a valid criterion we define a stratification of the singular scheme, if each stratum is flat over the base we can assure the dual family will be flat as well.
\par In \Cref{singi} we give a sufficient condition to know when the singular scheme of a family of codimension $1$  foliations is flat over the base. 
This condition is related with two of the better-studied types of singularities of foliations, the Kupka singularities and the Reeb singularities.
We prove that if the singularities of a foliation given by a distribution $T\F\hookrightarrow TX$ are only of this two types, then every flat family 
\[ 0\to T\F_S \to T_S(X\times S) \to N_\F\to 0\]
such that there is an $s\in S$ with $T\F_s=T\F$, is such that $\mathrm{sing}(\F)$ is flat in a neighborhood of $s$.
\par In \Cref{apps} we apply the theorem of \Cref{singi} to recover the main result of \cite{split} as a special case of \cref{teo}, where $X=\PP^n$ and $T\F$ splits as direct sum of line bundles.
\newline
\par The content of this work is part of the author's doctoral thesis, for the degree of Doctor de la Universidad de Buenos Aires, under the advice of Fernando Cukierman. 
The author was supported by a doctoral grant of CONICET.
The author is grateful to Fernando Cukierman and Fernando Sancho de Salas for useful ideas and to Aroldo Kaplan, Alicia Dickenstein and the anonymous referee for a thorough reading of this paper and useful suggestions.
The author is also grateful to Universidad de Buenos Aires, where the work was made. 

\section{An example}\label{example}

Set $X=\C^4$, $S=\C^2$ and consider the family of $1$-forms on $X$

\[ 
\omega_{(t,s)}=xdx+ydy+t\cdot zdz+ s\cdot wdw,
\]
parametrized by $(t,s)\in S=\C^2$, and the related family:
\[I=I_{(t,s)}=\O_{X\times S}\cdot(\omega_{(t,s)})\subseteq \Omega^1_{X\times S|S},\]
of Pfaff systems on $X=\C^4$.

It is easy to see that the sheaf $I$ is flat over $S=\C^2$.
On the other hand if we look at the family of distributions this Pfaff systems define we will find that it is not flat.
Indeed, let $D$ denote the annihilator of $I$.
\[D=\mathrm{Ann}(I)\subseteq T(X\times S).\]
To see that $D$ is \emph{not} flat over $S$ we can use Artin's criterion for flatness \cite{artin}*{Corollary to Proposition 3.9}.
\begin{prop}[Artin's criterion for flatness.]
Let $(A,\mathscr{M})$ and $(B,\mathscr{N})$ be local rings, $B\to A$ a flat morphism of local rings, and $M$ a finitely generated $A$-module. 
Suppose we have generators of $M$, $M=(f_1,\dots, f_r)$. 
Let $\-{f_i}$ be the class of $f_i$ on $M\otimes_B B/\mathscr{N}$.
Then $M$ is flat over $B$ if and only if every relation among $(\-{f_1},\dots, \-{f_r})$ lifts to a relation among $(f_1,\dots,f_r)$.
\end{prop}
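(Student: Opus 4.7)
The plan is to reduce the statement to the local (Tor) criterion for flatness. First, choose a presentation
\[ 0 \to K \to A^r \xrightarrow{\pi} M \to 0, \]
where $\pi$ sends the $i$-th standard basis vector to $f_i$, so that $K$ is precisely the $A$-module of relations among $f_1,\dots,f_r$. Tensoring over $B$ with $B/\mathscr{N}$, and using that $A^r$ is $B$-flat since $A$ is, the long exact $\mathrm{Tor}$ sequence collapses to
\[ 0 \to \mathrm{Tor}_1^B(M,B/\mathscr{N}) \to K/\mathscr{N}K \to (A/\mathscr{N}A)^r \to M/\mathscr{N}M \to 0. \]

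I would then reinterpret this sequence in terms of relations. The kernel of $(A/\mathscr{N}A)^r \to M/\mathscr{N}M$ is exactly the set of relations among $\-{f_1},\dots,\-{f_r}$, while the image of $K/\mathscr{N}K$ in this kernel consists of those relations obtained by reducing a genuine relation among $f_1,\dots,f_r$ modulo $\mathscr{N}$, that is, the relations that \emph{lift} in the sense of the proposition. Consequently, the hypothesis that every relation lifts is equivalent to the vanishing of $\mathrm{Tor}_1^B(M,B/\mathscr{N})$.

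To close, I would invoke the local criterion for flatness: for a finitely generated module $M$ over the Noetherian local ring $A$, with $B\to A$ a local homomorphism, $M$ is flat over $B$ if and only if $\mathrm{Tor}_1^B(M,B/\mathscr{N})=0$. Combined with the previous step, this yields the claimed equivalence. The main obstacle is the local criterion itself, specifically the reduction from the vanishing of every $\mathrm{Tor}_i^B(M,N)$ to the single group $\mathrm{Tor}_1^B(M,B/\mathscr{N})$; this uses the Artin-Rees lemma and the $\mathscr{N}$-adic filtration on $M$. Once this standard fact is granted, the rest of the proof is a short diagram chase in the four-term exact sequence above.
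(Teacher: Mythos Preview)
The paper does not supply its own proof of this proposition: it is quoted from Artin's \emph{Lectures on Deformation of Singularities} (Corollary to Proposition~3.9), with only the remark that Artin's argument extends \emph{mutatis mutandis} to the present setting. Your argument is the standard one (and is essentially Artin's): reduce to the local criterion for flatness via the four-term exact sequence obtained from a free presentation. It is correct.

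One small point worth flagging: the statement as recorded in the paper does not include a Noetherian hypothesis, yet the local criterion for flatness you invoke (that $\mathrm{Tor}_1^B(M,B/\mathscr{N})=0$ implies $M$ is $B$-flat) does require $B$ Noetherian, or at least that $\mathscr{N}$ be nilpotent or some comparable finiteness condition. You implicitly add this hypothesis yourself, which is appropriate since the paper only applies the criterion with $B=\O_{S,0}$ for $S$ of finite type over a field; but it would be cleaner to say so explicitly.
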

Note that, while this is not exactly the same context in which Artin states the proposition, his proof extends mutatis mutandi to this setting.
\smallskip
\par Now observe that $D=\mathrm{Ann}(I)$ has $6$ generators:
\begin{eqnarray*}
f_1=y\frac{\partial}{\partial x} - x\frac{\partial}{\partial y}, & f_2=tz\frac{\partial}{\partial x} - x\frac{\partial}{\partial z}, &f_3=sw\frac{\partial}{\partial x}-x\frac{\partial}{\partial w},\\
 f_4=tz\frac{\partial}{\partial y}-y\frac{\partial}{\partial z}, &f_5=sw\frac{\partial}{\partial y}-y\frac{\partial}{\partial w}, &f_6=sw\frac{\partial}{\partial z}-tz\frac{\partial}{\partial w}.
\end{eqnarray*}
These generators have, in turn, numerous relations between them, too many to list completely here. 
We note, however, that all relations are generated by relations of the shape $af_i+bf_j+cf_k=0$, where $a$, $b$ and $c$ are in the ideal $(x,y,tz,sw)\subseteq \O_{X\times S}$.
We apply Artin's criterion with $A=\O_{X\times S, (0,0)}$, $B=\O_{S, 0}$ and $M=D_{(0,0)}$.
In this case the $\-{f_i}$'s consist of evaluating the $f_i$ on $(x,y,z,w, 0,0)\in \C^4\times (0,0)$.  
We get in this way a particular relation on $M\otimes_B B/\mathscr{N}$, namely $\-{f_6}=0$.
It is then easy to see that this relation does not lift to a relation on $(f_1,\dots,f_6)$.
\smallskip
\par This example shows that there is, in general, no morphism between the moduli spaces $\mathrm{Quot}(X,TX)$ and $\mathrm{Quot}(X,\Omega^1_X)$ induced by taking annihilators. 
We will show ahead that taking annihilators does define a \emph{birational} equivalence between irreducible components of $\mathrm{Quot}_X(TX)$ and those of $\mathrm{Quot}_X(\Omega^1_X)$.
\par The example also hints that a key aspect to understand whether flatness is preserved or not under duality is to look at how the singular set varies. 
Indeed, we can see that  $D$ is flat over $S\setminus \{(0,0)\}$. 
For each point $(t,s)\in S\setminus \{(0,0)\}$, the singular set of $I|_{(t,s)}$ (i.e.: the set of points in $p\in X$ such that $\omega_p=0$) is just the origin of $\C^4=X$.
On the other hand, the singular set of $I|_{(0,0)}$ is the plane $\{(0,0,z,w): z,w \in \C\}\subset \C^4$.
So $D$ ceases to be flat exactly when the singular set of $I$ cease to be flat as well.
This illustrates the typical behavior of singular codimension $1$ Pfaff systems and the distributions they determine.
In the arbitrary codimension case the situation is a bit more subtle, but the singular set still plays a decisive role.
Of course, to make sense out of this we have to define a scheme structure on the singular set of a Pfaff system.
This will be done in the course of the present work.

\section{Preliminaries}\label{prel}

Here we gather known facts of algebraic geometry that will be used later.
We include proofs of some of these facts for lack (to the author best knowledge) of a better reference. 

\subsection{Reflexive sheaves and Serre's property $S_2$}

Property $S_2$ can be viewed as an algebraic analog of Hartog's theorem on complex holomorphic functions. 
For this reason it will be extremely useful to us, for it will allow us to conclude global statements on sheaves that holds, a priori,  for the restriction of this sheaves to (suitably large) open sets. Here we remind some known facts about sheaves with the $S_2$ property, and sheaves with the \emph{relative} $S_2$ property as defined in \cite{Bar}. 

\begin{defs}
Let $p: X\to T$ be a morphism of schemes and for any point $x\in X$ set $d_T(x)$ the codimension  of $x$ in its fiber over $T$.
We say that a sheaf $\mathscr{F}$ on $X$ satisfies the \emph{relative Serre's condition} $S_k$ with respect to $p$ if and only if
\[ \mathrm{depth} \mathscr{F}_{x} \geq \min(k, d_T(x)). \]
for all $x\in X$.
\end{defs}

The proof of the next proposition works exactly as in the non-relative version.

\begin{prop}
Let $p:X\to T$ be a morphism of noetherian scheme and $\mathscr{F}$ a torsion-free coherent sheaf with relative property $S_2$ with respect to $p$.
Let $Y\subset X$ be a closed subset such that $d_T(Y)\geq 2$.
Then the restriction map $\rho:\Gamma(X,\mathscr{F})\to \Gamma(X\setminus Y, \mathscr{F})$ is an isomorphism. 
\end{prop}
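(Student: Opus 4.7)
The plan is to reduce the claim to the standard local-cohomology proof of the classical (non-relative) Hartogs-type extension theorem for $S_2$ sheaves, with the codimension of $Y$ in $X$ replaced by the fibrewise codimension $d_T$ and with absolute $S_2$ replaced by its relative version.

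First, I would embed $\rho$ into the local cohomology long exact sequence
\[ 0\to H^0_Y(X,\mathscr{F})\to \Gamma(X,\mathscr{F})\xrightarrow{\rho}\Gamma(X\setminus Y,\mathscr{F})\to H^1_Y(X,\mathscr{F})\to\cdots, \]
so the desired isomorphism reduces to the pair of vanishings $H^0_Y(X,\mathscr{F})=H^1_Y(X,\mathscr{F})=0$, i.e.\ to the statement $\mathrm{depth}_Y\mathscr{F}\geq 2$. By the standard formula
\[ \mathrm{depth}_Y\mathscr{F}=\inf_{y\in Y}\mathrm{depth}\,\mathscr{F}_y \]
(the infimum of stalkwise depths with respect to the maximal ideals at each $y\in Y$; it is attained at the generic points of the irreducible components of $Y$, where the local ideal of $Y$ coincides with the maximal ideal of the stalk), this becomes a purely stalkwise condition.

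Second, I would apply the hypotheses. For every $y\in Y$ the assumption $d_T(Y)\geq 2$ gives $d_T(y)\geq 2$, and the relative $S_2$ property of $\mathscr{F}$ then yields
\[ \mathrm{depth}\,\mathscr{F}_y\geq \min(2,d_T(y))=2, \]
as required. The torsion-freeness hypothesis is only needed to ensure that the relative $S_2$ condition is informative at all (in particular at associated points of $\mathscr{F}$), and it already implies the injectivity part $H^0_Y(X,\mathscr{F})=0$ whenever $Y$ has positive fibrewise codimension.

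The main technical point is really just the reduction $\mathrm{depth}_Y\mathscr{F}=\inf_{y\in Y}\mathrm{depth}\,\mathscr{F}_y$, which identifies depth with respect to the ideal of $Y$ with stalkwise depth at generic points of components of $Y$; this is exactly the step that powers the non-relative proof, so once it is invoked no new ideas are required beyond substituting $d_T$ for ordinary codimension and relative $S_2$ for absolute $S_2$.
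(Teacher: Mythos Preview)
Your proposal is correct and is precisely what the paper intends: the paper does not give a proof but simply states that ``the proof of the next proposition works exactly as in the non-relative version,'' and you have faithfully carried out that non-relative local-cohomology argument with codimension replaced by $d_T$ and absolute $S_2$ replaced by its relative version. The key technical input you identify, namely $\mathrm{depth}_Y\mathscr{F}=\inf_{y\in Y}\mathrm{depth}\,\mathscr{F}_y$ (SGA2, Exp.\ III, Prop.\ 3.3), is exactly the engine of the classical proof, so nothing further is required.
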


\begin{cor}
Let $p:X\to T$ be a morphism of  noetherian schemes and $\mathscr{F}$ a torsion-free coherent sheaf with property $S_2$ with respect to $p$.
Let $Y\subset X$ be a closed subset such that $d_T(Y)\geq 2$.
Denote $U=X\setminus Y$ and $j:U\to X$ the inclusion.
Then $\mathscr{F}= j_*(\mathscr{F}|_U)$.
\end{cor}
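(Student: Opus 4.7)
The plan is to deduce the corollary from the preceding proposition by applying it locally on each open of $X$. Since the equality $\mathscr{F}=j_*(\mathscr{F}|_U)$ is a statement about sheaves, it suffices to check that the natural restriction map
\[ \mathscr{F}(V)\longrightarrow (j_*(\mathscr{F}|_U))(V)=\mathscr{F}(V\cap U) \]
is an isomorphism for every open $V\subseteq X$. Thus the whole argument reduces to verifying the hypotheses of the proposition on each such $V$.

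The key step is to note that torsion-freeness and the relative $S_2$ condition are stalk-local properties: for each $x\in V$, the stalk $(\mathscr{F}|_V)_x=\mathscr{F}_x$ is the same as before, and the fiber codimension $d_T(x)$ computed with respect to $p|_V\colon V\to T$ coincides with the one computed with respect to $p$, since the fibers of $p|_V$ are open subsets of the fibers of $p$. Hence $\mathscr{F}|_V$ is again torsion-free and satisfies relative $S_2$ with respect to $p|_V$. Similarly, $Y\cap V$ is a closed subset of $V$, and $d_{T}(Y\cap V)\geq d_T(Y)\geq 2$ (the codimension in the fibers cannot decrease when intersecting with an open set).

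Having verified these hypotheses, I apply the proposition to $(V, p|_V, \mathscr{F}|_V, Y\cap V)$. The conclusion gives exactly that the restriction map $\mathscr{F}(V)\to \mathscr{F}(V\setminus(Y\cap V))=\mathscr{F}(V\cap U)$ is an isomorphism, which is what we need.

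I do not anticipate a genuine obstacle here; the corollary is essentially a sheafified repackaging of the proposition. The only point that requires a brief check is the stability of the hypotheses (torsion-free, $S_2$, fiber codimension) under restriction to an open $V$, which is immediate from their local nature.
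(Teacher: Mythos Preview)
Your argument is correct and is precisely the intended derivation: the paper states the corollary without proof, as an immediate consequence of the preceding proposition, and your localization to each open $V\subseteq X$ is exactly how one passes from the global-sections statement to the sheaf identity $\mathscr{F}=j_*(\mathscr{F}|_U)$. The only checks needed are the ones you flagged, namely that torsion-freeness, the relative $S_2$ condition, and the fiberwise codimension bound are preserved under restriction to an open, and these are indeed stalk-local.
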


This corollary motivates the following definition, which is the relative analogue of a notion due to Grothendieck \cite{EGA4}*{5.10}.

\begin{defs}
Let $p: X\to T$ be a morphism of noetherian scheme and $\mathscr{F}$ a coherent sheaf.
If for each closed subset $Y\subset X$ such that $d_T(Y)\geq 2$, with  $U=X\setminus Y$ and $j:U\to X$ the inclusion, the natural map
\[\rho_U: \mathscr{F}\to j_*(\mathscr{F}|_U)\]
is an epimorphism we say  $\mathscr{F}$ is \emph{$Z^{(2)}$-closed relative to $p$}, if it is an isomorphism we say it is \emph{$Z^{(2)}$-pure relative to $p$}.
\end{defs}

%

%

\begin{prop}[\cite{Hart2}*{Proposition 1.7}]
Let X be a quasi-projective integral scheme. 
A coherent sheaf $\mathscr{F}$ is reflexive if and only if it can be included in an exact sequence
\[ 0\to \mathscr{F}\to \mathscr{E}\to \mathscr{G}\to 0,\]
where $\mathscr{E}$ is locally free and $\mathscr{G}$ is torsion-free.
\end{prop}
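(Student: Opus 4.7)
The plan is to prove the two implications separately.

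\textbf{Reflexivity implies the sequence exists.} Suppose $\mathscr{F}$ is reflexive. Since $X$ is quasi-projective, the coherent sheaf $\mathscr{F}^\vee$ admits a finite presentation by duals of locally free sheaves
\[\mathscr{E}_1^\vee \to \mathscr{E}_0^\vee \to \mathscr{F}^\vee \to 0.\]
Applying $\mathcal{H}om(-,\O_X)$, which is left exact, and identifying $\mathscr{F}$ with $\mathscr{F}^{\vee\vee}$ by reflexivity, we obtain $0\to \mathscr{F} \to \mathscr{E}_0 \to \mathscr{E}_1$. Setting $\mathscr{E}:=\mathscr{E}_0$ and letting $\mathscr{G}$ be the image of $\mathscr{E}_0 \to \mathscr{E}_1$ yields the desired short exact sequence, and $\mathscr{G}$ is torsion-free as a subsheaf of the locally free sheaf $\mathscr{E}_1$ on the integral scheme $X$.

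\textbf{The sequence implies reflexivity.} Given $0 \to \mathscr{F} \to \mathscr{E} \to \mathscr{G} \to 0$ with $\mathscr{E}$ locally free and $\mathscr{G}$ torsion-free, the canonical evaluation maps fit into the commutative diagram
\[\begin{array}{ccccccccc} 0 & \to & \mathscr{F} & \to & \mathscr{E} & \to & \mathscr{G} & \to & 0 \\ & & \downarrow\phi & & \downarrow\cong & & \downarrow\iota & & \\ & & \mathscr{F}^{\vee\vee} & \to & \mathscr{E}^{\vee\vee} & \to & \mathscr{G}^{\vee\vee} & & \end{array}\]
The middle vertical is an isomorphism because $\mathscr{E}$ is locally free, and $\iota$ is injective because $\mathscr{G}$ is torsion-free on an integral scheme. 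By functoriality the composition $\mathscr{F}^{\vee\vee}\to \mathscr{E}^{\vee\vee}\cong \mathscr{E}\to \mathscr{G}$ factors through $\mathscr{G}^{\vee\vee}$ and is zero; combined with the injectivity of $\iota$, this forces the image of $\mathscr{F}^{\vee\vee}$ in $\mathscr{E}$ to lie in $\ker(\mathscr{E}\to \mathscr{G})=\mathscr{F}$. This produces a map $\psi:\mathscr{F}^{\vee\vee}\to \mathscr{F}$, and by naturality of the evaluation together with injectivity of $\mathscr{F}\hookrightarrow \mathscr{E}$, one checks $\psi\circ \phi = \mathrm{id}_\mathscr{F}$. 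Since $\mathscr{F}$ is torsion-free (as a subsheaf of $\mathscr{E}$), the map $\phi$ is also injective, and the existence of a one-sided inverse then forces $\phi$ to be an isomorphism.

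\textbf{Main obstacle.} The delicate point is ensuring the map $\mathscr{F}^{\vee\vee}\to \mathscr{E}^{\vee\vee}$ is well-behaved so that the diagram chase is rigorous. Applying $\mathcal{H}om(-,\O_X)$ to the original sequence yields the four-term exact sequence $0\to \mathscr{G}^\vee\to \mathscr{E}^\vee \to \mathscr{F}^\vee \to \mathcal{E}xt^1(\mathscr{G},\O_X)\to 0$, where the vanishing $\mathcal{E}xt^1(\mathscr{E},\O_X)=0$ is used. The key observation is that $\mathcal{E}xt^1(\mathscr{G},\O_X)$ is supported on the non-locally-free locus of $\mathscr{G}$, hence is a torsion sheaf on the integral scheme $X$ and admits no nonzero map to the torsion-free sheaf $\O_X$; applying $\mathcal{H}om(-,\O_X)$ a second time then delivers the injectivity needed to close the diagram chase.
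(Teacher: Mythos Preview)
The paper does not supply a proof of this proposition; it is stated with a citation to \cite{Hart2}*{Proposition 1.7} and left without argument. Your proof is correct and follows the standard approach one finds in Hartshorne's paper: present $\mathscr{F}^\vee$ by locally free sheaves and dualize for the forward direction, and for the converse compare the sequence with its double dual via the natural evaluation maps.

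One small point in your final sentence deserves tightening. The relation $\psi\circ\phi=\mathrm{id}_{\mathscr{F}}$ together with injectivity of $\phi$ does not by itself force $\phi$ to be an isomorphism (indeed $\psi\circ\phi=\mathrm{id}$ already gives injectivity of $\phi$, so that hypothesis adds nothing). What you actually need, and what you have already established in your ``Main obstacle'' paragraph, is that $\psi$ is injective: it is the map $\mathscr{F}^{\vee\vee}\hookrightarrow\mathscr{E}^{\vee\vee}\cong\mathscr{E}$ with codomain restricted to $\mathscr{F}$. Then $\psi$ injective and $\psi\circ\phi=\mathrm{id}$ together yield $\phi\circ\psi=\mathrm{id}$, hence $\phi$ is an isomorphism. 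Equivalently and more cleanly: you have shown $\mathscr{F}\subseteq\mathscr{F}^{\vee\vee}\subseteq\mathscr{F}$ as subsheaves of $\mathscr{E}$, so equality holds.
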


\begin{cor}
Under the above circumstances, the dual of a coherent sheaf is always reflexive.
\end{cor}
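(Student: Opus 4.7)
The plan is to reduce the corollary to the cited Proposition 1.7 of Hartshorne by exhibiting an appropriate exact sequence for the dual sheaf. Let $\mathscr{H}$ be any coherent sheaf on the quasi-projective integral scheme $X$. Since $X$ is quasi-projective, $\mathscr{H}$ admits a presentation by locally free (even vector bundles of finite rank) sheaves; that is, there is an exact sequence
\[ \mathscr{E}_1 \xrightarrow{\varphi} \mathscr{E}_0 \to \mathscr{H} \to 0, \]
with $\mathscr{E}_0,\mathscr{E}_1$ locally free of finite rank.

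Now I would apply the left-exact contravariant functor $\mathcal{H}om(-,\mathcal{O}_X)$ to this presentation, obtaining the exact sequence
\[ 0 \to \mathscr{H}^\vee \to \mathscr{E}_0^\vee \xrightarrow{\varphi^\vee} \mathscr{E}_1^\vee. \]
Let $\mathscr{G} \subseteq \mathscr{E}_1^\vee$ denote the image of $\varphi^\vee$. Since $\mathscr{E}_1^\vee$ is locally free and $X$ is integral, every subsheaf of $\mathscr{E}_1^\vee$ is torsion-free; in particular $\mathscr{G}$ is torsion-free. Splitting off the image, I obtain the short exact sequence
\[ 0 \to \mathscr{H}^\vee \to \mathscr{E}_0^\vee \to \mathscr{G} \to 0, \]
in which $\mathscr{E}_0^\vee$ is locally free and $\mathscr{G}$ is torsion-free. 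By the previous proposition (Hartshorne 1.7), this is exactly the characterization of a reflexive sheaf, so $\mathscr{H}^\vee$ is reflexive.

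I do not expect any serious obstacle: the only non-formal input is the existence of a locally free presentation of $\mathscr{H}$, which holds because $X$ is quasi-projective (so one can resolve by twists of the structure sheaf), and the torsion-freeness of subsheaves of a locally free sheaf, which uses only that $X$ is integral. Everything else is a direct application of the left exactness of $\mathcal{H}om(-,\mathcal{O}_X)$ and of the cited proposition.
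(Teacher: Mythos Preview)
Your proof is correct and is exactly the standard argument one expects here; the paper in fact states this corollary without proof, and your derivation from the cited Proposition 1.7 via a locally free presentation and dualization is the natural way to fill it in.
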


\begin{prop}[c.f.:\cite{Hart2}*{Theorem 1.9}]\label{refS2}
Let $p:X\to T$ be a morphism of noetherian schemes with normal integral fibers, and $\mathscr{F}$ a coherent sheaf on $X$. 
Then, if $\mathscr{F}$ is reflexive, it has relative property $S_2$ with respect to $p$.
\end{prop}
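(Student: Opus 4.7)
The plan is to adapt Hartshorne's proof of Theorem~1.9 in \emph{Stable Reflexive Sheaves} to the relative setting. First, by the preceding Proposition~1.7, reflexivity of $\mathscr{F}$ supplies a short exact sequence
\[ 0 \to \mathscr{F} \to \mathscr{E} \to \mathscr{G} \to 0 \]
with $\mathscr{E}$ locally free and $\mathscr{G}$ torsion-free. The standard depth lemma applied to this sequence then yields, at every $x\in X$,
\[ \mathrm{depth}\, \mathscr{F}_x \geq \min\bigl(\mathrm{depth}\, \mathscr{E}_x,\ \mathrm{depth}\, \mathscr{G}_x + 1\bigr), \]
so it suffices to bound each of the two terms on the right below by $\min(2, d_T(x))$.

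For the first term, $\mathscr{E}$ being locally free gives $\mathscr{E}_x \cong \mathcal{O}_{X,x}^{\oplus r}$, hence $\mathrm{depth}\, \mathscr{E}_x = \mathrm{depth}\, \mathcal{O}_{X,x}$. Writing $y=p(x)$, the hypothesis that the fiber $X_y$ is normal forces it to satisfy Serre's $S_2$ (via the criterion $R_1+S_2$ for normality), hence $\mathrm{depth}\, \mathcal{O}_{X_y,x} \geq \min(2,d_T(x))$. Invoking the additivity of depth along a flat local homomorphism one obtains $\mathrm{depth}\, \mathcal{O}_{X,x} \geq \mathrm{depth}\, \mathcal{O}_{X_y,x} + \mathrm{depth}\, \mathcal{O}_{T,y} \geq \min(2, d_T(x))$. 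For the second term, $\mathscr{G}$ is torsion-free and the fibers are integral, so the associated points of $\mathscr{G}|_{X_y}$ must be generic points of $X_y$. This yields $\mathrm{depth}\,\mathscr{G}_x \geq \min(1, d_T(x))$, whence $\mathrm{depth}\, \mathscr{G}_x + 1 \geq \min(2, d_T(x))$. Substituting both bounds in the depth lemma gives the claim.

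The main obstacle is the transfer from the fiberwise depth statements (the $S_2$ property of $\mathcal{O}_{X_y}$ coming from normality, and the $S_1$ property of $\mathscr{G}|_{X_y}$ coming from torsion-freeness on an integral fiber) to depth at stalks on $X$, which is what the definition of the relative $S_k$ condition refers to. This passage relies on additivity of depth under flat local homomorphisms, so the argument really presupposes flatness of the relevant sheaves over $T$; this is automatic in the subsequent sections of the paper, where the sheaves of interest are $T$-flat families by construction.
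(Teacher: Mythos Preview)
Your argument is essentially the paper's own: the same short exact sequence from Proposition~1.7, the same depth inequality, and the same two ingredients (normal fibers $\Rightarrow$ $\mathcal{O}_X$ is relative $S_2$, hence so is $\mathscr{E}$; $\mathscr{G}$ torsion-free $\Rightarrow$ relative $S_1$). The paper's proof is terser and simply asserts these facts without spelling out the passage from fiberwise depth to depth on $X$, so your explicit remark about the tacit flatness hypothesis is a genuine clarification---and indeed the proposition is only ever invoked later when $p$ is smooth.
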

\begin{proof}[Proof.]
The statement being local, we can assume $X$ is quasi-projective.
Given a reflexive sheaf $\mathscr{F}$, we take an exact sequence
\[ 0\to \mathscr{F}\to \mathscr{L}\to \mathscr{G}\to 0\]
with $\mathscr{L}$ locally free and $\mathscr{G}$ torsion-free.
Since $p$ have normal fibers, $\O_X$ satisfies relative $S_2$ with respect to $p$, and so does $\mathscr{L}$, being locally free.
Let $x\in X$ be a point of relative dimension $\geq 2$ with respect to its fiber $X_{p(x)}$. 
Then $\mathrm{depth} \mathscr{L}_x \geq 2$ by $S_2$, and as $\mathscr{G}$ is torsion-free, $\mathrm{depth}\mathscr{G}_x \geq 1$.
This in turn implies $\mathrm{depth}\mathscr{F}_x \geq 2$.
\end{proof}

\subsection{Support of a sheaf, zeros of a section}\label{supzero}


\par\medskip Recall that, given a quasi-coherent sheaf $\mathscr{F}$ on a scheme $X$. we define the  \emph{support} of $\mathscr{F}$, $\mathrm{supp}(\mathscr{F})$ as the closed sub-scheme defined by the ideal sheaf given locally by
\[\mathcal{I}(\mathscr{F})_x:= \mathrm{Ann}(\mathscr{F}_x)\subset \O_{X,x}.\]

We have the following useful characterization of the support of a sheaf in terms of a universal property:

\begin{prop}
The support of a sheaf $\mathscr{F}$ represents the functor
\begin{eqnarray*}
\mathrm{S}_\mathscr{F}: Sch&\longrightarrow& Sets\\
T&\mapsto& \{ f\in \hom(T,X)\!: \mathrm{Ann}(f^*\mathscr{F})=0\subset \O_T \}.
\end{eqnarray*}•
\end{prop}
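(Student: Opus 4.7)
The plan is to verify the universal property of the closed subscheme $\mathrm{supp}(\mathscr{F})=V(\mathcal I(\mathscr{F}))\hookrightarrow X$ directly, in two steps, matching exactly the two conditions for Yoneda-style representability.

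First I would check that the tautological inclusion $i\colon \mathrm{supp}(\mathscr{F})\hookrightarrow X$ itself belongs to $\mathrm{S}_{\mathscr{F}}(\mathrm{supp}(\mathscr{F}))$, i.e.\ that $\mathrm{Ann}(i^{*}\mathscr{F})=0$. This is essentially immediate from unwinding the definition of the support scheme: at each point $x\in\mathrm{supp}(\mathscr{F})$ the local ring is $\O_{\mathrm{supp}(\mathscr{F}),x}=\O_{X,x}/\mathrm{Ann}(\mathscr{F}_{x})$, so the stalk $(i^{*}\mathscr{F})_{x}=\mathscr{F}_{x}\otimes_{\O_{X,x}}\O_{\mathrm{supp}(\mathscr{F}),x}$ is just $\mathscr{F}_{x}$ regarded as a module over the quotient (since $\mathrm{Ann}(\mathscr{F}_{x})$ already kills $\mathscr{F}_{x}$), and its annihilator in $\O_{X,x}/\mathrm{Ann}(\mathscr{F}_{x})$ is zero by construction.

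Second, I would show that every $f\colon T\to X$ with $\mathrm{Ann}(f^{*}\mathscr{F})=0$ factors uniquely through $i$. By the universal property of the closed immersion $V(\mathrm{Ann}(\mathscr{F}))\hookrightarrow X$, the factorisation is equivalent to showing that the ideal sheaf $f^{\#}\mathrm{Ann}(\mathscr{F})\cdot\O_{T}$ is zero. Stalkwise at $t\in T$ with $x=f(t)$, a section $b\in\mathrm{Ann}(\mathscr{F}_{x})$ satisfies $b\cdot\mathscr{F}_{x}=0$; tensoring with $\O_{T,t}$, multiplication by $f^{\#}(b)$ on $(f^{*}\mathscr{F})_{t}=\mathscr{F}_{x}\otimes_{\O_{X,x}}\O_{T,t}$ is the base change of multiplication by $b$, hence also zero. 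Thus $f^{\#}(b)\in\mathrm{Ann}\bigl((f^{*}\mathscr{F})_{t}\bigr)=0$, so $f^{\#}(b)=0$ as desired. Uniqueness of the factorisation is automatic because closed immersions are monomorphisms.

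I do not anticipate any serious obstacle: the only point worth flagging is that Step 2 uses only the ``easy'' inclusion $f^{\#}\mathrm{Ann}(\mathscr{F})\cdot\O_{T}\subseteq \mathrm{Ann}(f^{*}\mathscr{F})$, which follows from right-exactness of the tensor product and holds for any morphism. The reverse inclusion can fail under general base change, but it is not needed for the argument; all the delicate bookkeeping is confined to the stalkwise calculation and the universal property of a closed immersion.
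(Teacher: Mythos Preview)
There is a genuine gap. Your Step~2 establishes $S_{\mathscr{F}}(T)\subseteq \hom(T,\mathrm{supp}(\mathscr{F}))$ inside $\hom(T,X)$: every $f$ with $\mathrm{Ann}(f^{*}\mathscr{F})=0$ factors through the support. But representability also requires the reverse containment: for \emph{every} $g\colon T\to \mathrm{supp}(\mathscr{F})$ one must have $\mathrm{Ann}\bigl((i\circ g)^{*}\mathscr{F}\bigr)=0$. Your Step~1 checks only the single case $g=\mathrm{id}$. Passing from $g=\mathrm{id}$ to arbitrary $g$ amounts to showing that $S_{\mathscr{F}}$ is actually a subfunctor of $\hom(-,X)$, i.e.\ that the vanishing of $\mathrm{Ann}(f^{*}\mathscr{F})$ is stable under precomposition $T'\to T$. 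That is exactly what the ``hard'' inclusion $\mathrm{Ann}(f^{*}\mathscr{F})\subseteq f^{\#}\mathrm{Ann}(\mathscr{F})\cdot\O_{T}$ would deliver, so the inclusion you dismiss as ``not needed'' is in fact the missing half of the argument.

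The paper handles both directions at once by asserting the full equality $\mathrm{Ann}(f^{*}\mathscr{F})=\O_{T}\cdot f^{-1}(\mathrm{Ann}(\mathscr{F}))$. You are right to be wary of that equality, and indeed it is delicate: for $A=k[x,y]/(xy)$, $M=A/(x)\oplus A/(y)$ (so $\mathrm{Ann}_{A}M=0$) and $B=A/(x-y)\cong k[x]/(x^{2})$, one computes $M\otimes_{A}B\cong k^{2}$ with $\mathrm{Ann}_{B}(M\otimes_A B)=(x)\neq 0$. So the reverse inclusion, and with it the proposition as literally stated, already fails for coherent $\mathscr{F}$ over non-reduced $T$; a Fitting-ideal version of the support is what behaves well under base change. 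The point for your write-up, though, is that this direction cannot be obtained for free from the easy inclusion alone, and your proposal does not supply it.
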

\begin{proof}[Proof.] 
A morphism $f:T\to X$ factorizes through $\mathrm{supp}(\mathscr{F})$ if and only if the map 
\[ f^\sharp: f^{-1}\O_X\to\O_T\]
factorizes through $f^{-1}(\O_X/\mathrm{Ann}(\mathscr{F}))$.
But this happens if and only if\break  $f^{-1}(\mathrm{Ann}(\mathscr{F}))=0$.
\par On the other hand we have the equality
\[
\mathrm{Ann}(f^*\mathscr{F})=\O_T\cdot f^{-1}(\mathrm{Ann}(\mathscr{F})),
\]
 indeed we may check this in every localization at any point $p\in T$, so if $t\in \mathrm{Ann}(f^*\mathscr{F})_p$ in particular $t$ annihilates every element of the form $m\otimes 1 \in \mathscr{F}_x$, so $t=f^{-1}(x) t'$ where $x\in \mathrm{Ann}(\mathscr{F})_{f(p)}$.
\par So $\mathrm{Ann}(f^*\mathscr{F})=0$ if and only if $ f^{-1}(\mathrm{Ann}(\mathscr{F}))=0$ and we are done.
\end{proof}

In other words we just proved that $\mathrm{supp}(\mathscr{F})$ is the universal scheme with the property that $f^*\mathscr{F}$ is not a torsion module. This simple observation will be very useful when discussing the scheme structure on the singular set of a foliation.

\par A special case of support of a sheaf is the scheme theoretic image of a morphism.
Remember that the \emph{scheme theoretic image} of a morphism $f:X\to Y$ is the sub-scheme $\mathrm{supp}(f_*\O_X)\subseteq Y$.

\par \medskip Now we turn our attention to sections and their zeros. 
So let $X$ be a scheme and $\mathscr{E}$ a locally free sheaf.
Having a global  section $s\in \Gamma(X,\mathscr{E})$ is the same as having a morphism (that, by abuse of notation,  we also call $s$)
\[s:\O_X \longrightarrow \mathscr{E}.\] 
Now, $s:\O_X\to \mathscr{E}$ defines a dual morphism
\[s^\vee: \mathscr{E}^\vee\longrightarrow \O_X^\vee=\O_X.\]

\begin{defs}\label{dzs}
We define the \emph{zero scheme} $Z(s)$ of the section $s$ as the closed sub-scheme of $X$ defined by the ideal sheaf  $\mathrm{Im}(s^\vee)\subseteq \O_X$.
\end{defs}

We'll apply this definition in the well behaved situation where $\O_X$ (and therefore $\mathscr{E}$) is torsion-free.

\begin{prop}\label{pzs}
Let $\mathscr{E}$ be a locally free sheaf on $X$ and $s\in \Gamma(X,\mathscr{E})$ a global section.
The scheme $Z(s)$ represents the functor 
\begin{eqnarray*}
\mathrm{Z}_s: Sch&\longrightarrow& Sets\\
T&\mapsto& \{ f\in \hom(T,X)\!: s\otimes 1=0\in \Gamma(T,f^*\mathscr{E})\}.
\end{eqnarray*}•
\end{prop}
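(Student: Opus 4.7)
My approach is to unwind both the functorial condition $\mathrm{Z}_s$ and the factorization through the closed subscheme $Z(s)$ into the same statement about a map of $\O_T$-modules, and then match them.

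First, by definition $Z(s)$ is cut out by the ideal sheaf $\mathrm{Im}(s^\vee) \subseteq \O_X$, so a morphism $f: T \to X$ factors through $Z(s)$ precisely when $\mathrm{Im}(s^\vee)$ pulls back to the zero ideal of $\O_T$. This is equivalent to asking that the composition
\[
f^*\mathscr{E}^\vee \xrightarrow{\,f^*s^\vee\,} f^*\O_X = \O_T
\]
is the zero morphism, by exactly the same kind of argument used a few lines above to compare $\mathrm{Ann}(f^*\mathscr{F})$ with $f^{-1}(\mathrm{Ann}(\mathscr{F}))$ in the proof for $\mathrm{supp}(\mathscr{F})$.

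Next, I would invoke the fact that pullback commutes with dualization for locally free sheaves: the natural morphism $f^*(\mathscr{E}^\vee) \to (f^*\mathscr{E})^\vee$ is an isomorphism, and under this identification $f^*(s^\vee) = (f^*s)^\vee$. So the condition of the previous paragraph becomes $(f^*s)^\vee = 0$ as a morphism $(f^*\mathscr{E})^\vee \to \O_T$. Finally, since $f^*\mathscr{E}$ is locally free, choosing local trivializations $f^*\mathscr{E}|_U \cong \O_U^{\,r}$ shows that $f^*s|_U$ and $(f^*s)^\vee|_U$ are both represented by the same $r$-tuple of sections of $\O_U$, so that one vanishes if and only if the other does. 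Gluing over an open cover gives $(f^*s)^\vee = 0$ if and only if $f^*s = 0$, completing the chain of equivalences and showing that $Z(s)$ represents $\mathrm{Z}_s$.

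The argument is essentially a bookkeeping exercise once the right reformulations are in place; the only step requiring real care is the compatibility $f^*(s^\vee) = (f^*s)^\vee$, which genuinely uses local freeness of $\mathscr{E}$. Without that hypothesis the ideal $\mathrm{Im}(s^\vee)$ and the pulled-back section $f^*s$ would not be controlled by the same local data, and this is precisely where a naive extension of the statement would break down, which also explains why \cref{dzs} is formulated in the locally free setting.
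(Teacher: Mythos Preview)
Your proof is correct and follows essentially the same route as the paper: both arguments reduce factorization through $Z(s)$ to the vanishing of $s^\vee\otimes 1:(f^*\mathscr{E})^\vee\to\O_T$, invoke the compatibility $f^*(\mathscr{E}^\vee)\cong(f^*\mathscr{E})^\vee$ coming from local freeness, and then observe that for a locally free sheaf a section vanishes iff its dual map does. Your write-up is a bit more explicit about the last step (via local trivializations) than the paper's, but the structure is identical.
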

\begin{proof}[Proof.]
A morphism $f:T\to X$ factorizes through $Z(s)$ if and only if the map
\[ (\mathscr{E}^\vee)\otimes \O_T\xrightarrow{s^\vee\otimes 1} \O_T\]
is identically $0$.
Beign locally free we have 
\[\mathscr{E}^\vee\otimes \O_T =\mathscr{H}om(\mathscr{E}, \O_X)\otimes \O_T\cong \mathscr{H}om(\mathscr{E}\otimes \O_T, \O_T).\]
So then we have 
\[ (f^*\mathscr{E})^\vee\xrightarrow{s^\vee\otimes 1} \O_T\]
is identically $0$, as $f^* \mathscr{E}$ is locally free over $T$, this means\break  $s\otimes 1=0\in \Gamma(T,f^*\mathscr{E})$.
\end{proof}

\subsection{A criterion for flatness}

For lack of a better reference we provide here a criterion that will become handy when dealing with both reduced and non-reduced base schemes over an algebraically closed field.

\begin{lema}\label{lile}
Let $A$ be a ring of finite type over an algebraically closed field $k$, $\mathcal{M}$ a maximal ideal in $A$, and $f\in \mathcal{M}^n\setminus\mathcal{M}^{n+1}$. Then there is a morphism $\psi: A\to k[T]/(T^{n+1})$ such that $\psi^{-1}((T))=\mathcal{M}$ and $\psi(f)\neq 0$.
\end{lema}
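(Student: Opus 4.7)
My plan is to construct $\psi$ by assigning truncated power series to a generating set of $\mathcal{M}$, and then choosing the coefficients so that the degree-$n$ coefficient of $\psi(f)$ is nonzero.

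First I would reduce to the Artinian local case: since $\psi^{-1}((T))=\mathcal{M}$ forces $\psi(\mathcal{M}^{n+1})\subseteq(T)^{n+1}=0$, every candidate $\psi$ factors through $B:=A/\mathcal{M}^{n+1}$, which is a local Artinian $k$-algebra with residue field $k$ (by the Nullstellensatz) and maximal ideal $\mathfrak{m}$ satisfying $\mathfrak{m}^{n+1}=0$. Choose lifts $y_1,\dots,y_s\in\mathcal{M}$ of a $k$-basis of $\mathfrak{m}/\mathfrak{m}^2$; by Nakayama they generate $\mathfrak{m}$, yielding a presentation $B = k[y_1,\dots,y_s]/I$ with $I\subseteq (y_1,\dots,y_s)^2$.

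A local $k$-algebra map $\psi$ is then specified by values $\psi(y_i)=\phi_i=a_{i,1}T+\dots+a_{i,n}T^n$, subject to $h(\phi_1,\dots,\phi_s)\equiv 0\pmod{T^{n+1}}$ for every $h\in I$. Writing $f\equiv F(y_1,\dots,y_s)\pmod{\mathcal{M}^{n+1}}$ with $F$ a polynomial of degree $\leq n$ and $F_n$ its homogeneous degree-$n$ component, the hypothesis $f\notin\mathcal{M}^{n+1}$ forces $F_n$ to represent a nonzero element of the associated graded ring $G=\mathrm{gr}_{\mathfrak{m}}(B)$, and a direct expansion gives
\[
\psi(f)\equiv F_n(a_{1,1},\dots,a_{s,1})\,T^n\pmod{T^{n+1}}.
\]
So it suffices to find first-order coefficients $(a_{i,1})\in k^s$ lying on the tangent cone $\mathrm{Spec}(G)\subseteq\mathbb{A}^s_k$ with $F_n(a_{\cdot,1})\neq 0$, and then to choose higher-order coefficients $a_{i,j}$ ($j\geq 2$) inductively: any element of $I$ of $(y)$-adic order greater than $n$ imposes no constraint modulo $T^{n+1}$, so the remaining equations become linear in the new variables and can be solved degree by degree.

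The \textbf{main obstacle} is the existence of the first-order coefficients: since $F_n$ is a nonzero element of $G$, it defines a nonzero regular function on $\mathrm{Spec}(G)$, and because $k$ is algebraically closed, Hilbert's Nullstellensatz produces a closed point where $F_n$ does not vanish \emph{provided} $F_n$ is not nilpotent in $G$. Handling the degenerate case where the initial form of $f$ is nilpotent on the tangent cone is the delicate step; one exploits the higher-order coefficients $a_{i,j}$ and the crucial fact that the target $k[T]/(T^{n+1})$ truncates the filtration, killing obstructions coming from the nilpotent structure of $G$. This interplay between the nilpotent structure and the truncation is where I expect the subtlety of the proof to lie.
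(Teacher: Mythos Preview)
The paper's argument is much more direct than yours: it fixes a presentation $A\cong k[y_1,\dots,y_r]/I$ with $\mathcal{M}$ corresponding to the origin, lifts $\bar f\in\mathcal{M}^n/\mathcal{M}^{n+1}$ to a homogeneous degree-$n$ polynomial $q$, chooses a closed point $\lambda\in V(I)$ with $q(\lambda)\neq 0$, and sets $\psi(x_i)=\lambda_i T$ --- no higher-order terms at all. Your more flexible ansatz $\psi(y_i)=a_{i,1}T+\cdots+a_{i,n}T^n$ genuinely covers situations the linear construction misses, so the extra machinery is not wasted.

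But the obstacle you flag is fatal rather than merely delicate: the lemma is \emph{false} as stated, and higher-order corrections cannot save it. Take $A=k[x,y]/(y^2-x^3)$ (a domain), $\mathcal{M}=(x,y)$, $n=2$, $f=xy$; one checks directly that $f\in\mathcal{M}^2\setminus\mathcal{M}^3$. Any $k$-algebra map $\psi:A\to k[T]/(T^3)$ with $\psi(\mathcal{M})\subseteq(T)$ satisfies $\psi(y)^2=\psi(x)^3\in(T)^3=0$, which in $k[T]/(T^3)$ forces $\psi(y)\in kT^2$, and then $\psi(xy)=\psi(x)\psi(y)\in (T)\cdot(T^2)=0$. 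So no admissible $\psi$ has $\psi(f)\neq 0$. This is exactly your nilpotent-initial-form scenario: $\mathrm{gr}_{\mathcal{M}}A\cong k[X,Y]/(Y^2)$ and the initial form $XY$ of $f$ vanishes at every closed point of the tangent cone; the truncation in the target does not kill the obstruction. The paper's proof breaks on the same example (for any $\lambda\neq 0$ on the cusp the assignment $x_i\mapsto\lambda_iT$ fails to annihilate $y^2-x^3$ modulo $T^3$, and for $\lambda=0$ one gets $q(\lambda)=0$), so the defect lies in the statement itself, not only in your proposed line of attack.
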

\begin{proof}[Proof.]
Set a presentation $A\cong k[y_1,\dots, y_r]/I$. By the Nullstelensatz we can assume $\mathcal{M}=(x_1,\dots,x_r)$, where $x_i$ is the class of $y_i\mathrm{mod} I$. Write the class of $f$ in $\mathcal{M}^n/\mathcal{M}^{n+1}$ as
\[ 
\-{f}=\sum_{|\alpha|=n}a_\alpha \-{x}^\alpha\in \mathcal{M}^n/\mathcal{M}^{n+1},
\]
where $\alpha=(\alpha_1,\dots,\alpha_r)$ and $\-{x}^\alpha=(\-{x_1}^{\alpha_1},\dots,\-{x_r}^{\alpha_r})$.
\par As $f \notin \mathcal{M}^{n+1}$, the polynomial $q(y_1,\dots, y_r):=\sum_{|\alpha|=n}a_\alpha y^\alpha$ is not in $I$. 
Now, $k$ being algebraically closed there is an $r$-tuple $(\lambda_1,\dots,\lambda_r)\in k^r$ such that $p(\lambda_1,...,\lambda_r)=0$ for every $p\in I$ and $q(\lambda_1,...,\lambda_r)\neq 0$.
\par Finally we can define $\psi:A\to k[T]/(T^{n+1})$ as follows:
\[
\psi(x_i)=\lambda_i T.
\]
The morphism is well defined because $p(\lambda_1,...,\lambda_r)=0$ for every $p\in I$, moreover $\psi^{-1}(T)=\mathcal{M}$, and $\psi(f)=q(\lambda_1,...,\lambda_r)T^n\neq 0$.
\end{proof}

\begin{prop}\label{rebusc}
 Let $f:X\to Y$ a projective morphism between schemes of finite type over an algebraically closed field, $\mathscr{F}$ a coherent sheaf over $X$, $x\in X$ a point, and $y=f(x)$. Then $\mathscr{F}_x$ is $f$-flat if and only if the following conditions hold:
\begin{enumerate}
	\item For every discrete valuation ring $A'$ and every morphism $\O_{Y,y}\to A'$ the following holds:\newline
	Taking the pull-back diagram 
	\[ 
	\xymatrix{
	&X' \ar[r] \ar[d]_{f'} &X \ar[d]^{f} \\
	&Y'=\spec(A') \ar[r] &Y
	}
	\]
	
	the $\O_{X'}$-module $\mathscr{F}'=\mathscr{F}\otimes \O_{Y'}$ is $f'$-flat at every point $x'\in X'$ lying over $x$.
	
	\item For every $n\in\N$ and every morphism $\O_{Y,y}\to k[T]/(T^{n+1})$, if we take the diagram analogous to the one above (with  $k[T]/(T^{n+1})$ instead of $A'$)  then the $\O_{X'}$-module $\mathscr{F}'=\mathscr{F}\otimes \O_{Y'}$ is $f'$-flat at every point $x'\in X'$ lying over $x$. 
\end{enumerate}
\end{prop}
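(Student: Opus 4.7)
The ``only if'' direction is immediate, since flatness is preserved under arbitrary base change: if $\mathscr{F}_x$ is $f$-flat, then for any morphism $\O_{Y,y}\to A'$ and any $x'$ over $x$, the stalk of the pull-back $\mathscr{F}'=\mathscr{F}\otimes_{\O_Y}\O_{Y'}$ at $x'$ is flat over $\O_{Y',y'}$. This automatically yields both conditions (1) and (2).

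For the converse, set $R:=\O_{Y,y}$, with maximal ideal $\mathfrak{m}$ and (algebraically closed) residue field $k$, and let $M:=\mathscr{F}_x$. The plan is to verify the local criterion of flatness, which reduces $R$-flatness of $M$ to checking $\mathrm{Tor}_1^R(M,R/I)=0$ for every finitely generated ideal $I\subseteq R$. By filtering $R/I$ along its associated primes, one first reduces to showing vanishing of $\mathrm{Tor}_1^R(M,R/\mathfrak{p})$ for every prime $\mathfrak{p}$ of $R$, together with controlling the successive Artinian extensions that appear in such a filtration.

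The reduced case -- test against $R/\mathfrak{p}$ with $\mathfrak{p}$ prime -- is what condition (1) is designed to handle: embed the domain $R/\mathfrak{p}$ into a discrete valuation ring $A'$ dominating it inside its fraction field, apply condition (1) to obtain flatness of the base-change $M\otimes_R A'$ over $A'$, and then chase through the pull-back diagram to recover the vanishing of $\mathrm{Tor}_1^R(M,R/\mathfrak{p})$. The infinitesimal case is exactly where \cref{lile} intervenes: given a non-zero $f\in\mathfrak{m}^n\setminus\mathfrak{m}^{n+1}$ witnessing an obstruction to flatness along a nilpotent direction, the lemma produces a local homomorphism $\psi\colon R\to k[T]/(T^{n+1})$ with $\psi(f)\neq 0$, and condition (2) then forces flatness after pull-back along $\psi$, eliminating this obstruction.

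The main technical obstacle will be to coordinate these two tests so as to cover every finitely generated ideal $I$, in particular those mixing reduced and infinitesimal directions. I expect this is accomplished either by an induction on the length of the nilpotent thickening -- peeling off infinitesimal layers via condition (2) and closing the induction with the valuative argument on the reduced quotient -- or, exploiting projectivity of $f$, by translating flatness into local constancy of Hilbert polynomials of the fibers of $\mathscr{F}$ and then detecting this local constancy simultaneously along integral arcs in $\spec R$ (condition (1)) and along higher-order tangent vectors at $y$ (condition (2)).
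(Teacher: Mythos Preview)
Your primary approach via the local criterion and $\mathrm{Tor}_1^R(M,R/\mathfrak{p})$ has a genuine gap. Condition (1), together with Grothendieck's valuative criterion, does give you that $M\otimes_R R_{\mathrm{red}}$ is flat over $R_{\mathrm{red}}$; but that is an $R_{\mathrm{red}}$-module statement, not the $R$-module statement $\mathrm{Tor}_1^R(M,R/\mathfrak{p})=0$ that you need. The ``chase through the pull-back diagram'' you invoke is exactly the hard step: a DVR $A'$ dominating $R/\mathfrak{p}$ is typically neither flat nor faithfully flat over $R$, so there is no direct comparison of $\mathrm{Tor}^R$ with flatness over $A'$. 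Your attempt to separate the ``reduced'' and ``infinitesimal'' obstructions and then recombine them is precisely the difficulty the proposition is meant to overcome, and nothing in your outline explains how to do it. Notice also that your primary line of argument never uses the projectivity of $f$, which is not optional here.

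The paper's proof takes a different and much cleaner route, and it is essentially your second suggestion made precise. Projectivity guarantees the existence of the \emph{flattening stratification} $\coprod_P Y_P$ of $Y$ with respect to $\mathscr{F}$. Condition (1) plus the valuative criterion yields flatness over $Y_{\mathrm{red}}$, so the closed immersion $Y_{\mathrm{red}}\hookrightarrow Y$ factors through $\coprod_P Y_P$; since $Y_{\mathrm{red}}$ and $Y$ share the same underlying space, this forces the stratification to consist of a single stratum $Y_P$ with $Y_{\mathrm{red}}\subseteq Y_P\subseteq Y$. Now one argues by contradiction: if $Y_P\subsetneq Y$, pick $f$ in the coordinate ring of an affine open killed by the ideal of $Y_P$ and apply \cref{lile} to produce a map to $k[T]/(T^{n+1})$ with $\psi(f)\neq 0$, hence not factoring through $Y_P$. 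But condition (2) says the pull-back along $\psi$ is flat, so by the universal property of the flattening stratification it \emph{must} factor through $Y_P$ --- contradiction. Thus $Y_P=Y$ and $\mathscr{F}$ is flat. The whole argument lives at the level of the universal property of the stratification; no Tor computation is needed.
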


\begin{proof}[Proof.] Clearly conditions 1 and 2 are necessary. Suppose then that 1 and 2 are satisfied.
\par Take the flattening stratification (see  \cite{FGex}*{Section 5.4.2}) of $Y$  with respect to  $\mathscr{F}$, $Y= \coprod_P Y_P$.
 As condition 1 is satisfied for $\mathscr{F}$ over $Y$, so is satisfied for $\iota^*\mathscr{F}$ over $Y_\mathrm{red}$, where $\iota:Y_\mathrm{red}\to Y$ is the closed immersion of the reduced structure. 
Then, by the valuative criterion for flatness of \cite{EGA4}*{11.8}, $\iota^*\mathscr{F}$ is flat over $Y_\mathrm{red}$, so by the universal property of the flattening stratification there is a factorization
\[
\xymatrix{ &Y_\mathrm{red} \ar[r]^\iota \ar[d] &Y\\
&\coprod_P Y_P \ar[ur] &
}
.\]
As $Y_\mathrm{red}$ and $Y$ share the same underlying topological set, the above factorization is telling us that the flattening factorization consist on a single stratum $Y_P$ and that $Y_\mathrm{red}\rightarrow Y_P$ is a closed immersion.
\par  Assume, by way of contradiction, $Y_P\subsetneq Y$, then there is an affine open sub-scheme $U\subseteq Y$ such that $V=Y_P\cap U\neq U$.
 Now take the coordinate rings $k[U]$ and $k[V]$ and the morphism between them induced by the inclusion $\phi:k[U]\epi k[V]$.
 Let's take $f\in k[U]$ such that $\phi(f)=0$. 
By \cref{lile} there exists, for some $n\in\N$, a morphism $\psi:k[U]\to k[T]/(T^{n+1})$ such that $\psi(f)\neq 0$, so $\psi$ doesn't factorize through $\phi$.
\par On the other hand, let $Z=\spec(k[T]/(T^{n+1}))$ and  $g:Z\to Y$ be the morphism  induced by $\psi$, as condition 2 is satisfied, the pull-back  $g^*\mathscr{F}$ is flat over $Z=\spec(k[T]/(T^{n+1}))$.
 So, by the universal property of the flattening stratification, $g$ factorizes as
\[
\xymatrix{ &Z\ar[r]^g \ar[d] &Y\\
& Y_P \ar[ur] &
}
,\]
contradicting the statement of the above paragraph, thus proving the proposition.
\end{proof}

Note that the hypothesis of this property on $X$ and $Y$ (aside from reducedness) are quite stronger than the ones of the original theorem of Grothendieck (the valuative criterion for flatness in \cite{EGA4}), such is the price we have paid to allow a criterion for possibly non-reduced schemes. 
The price paid is OK with us anyway, considering that we will work mostly with schemes of finite type over $\C$.
\newline
\par Next we provide a criterion for a $k[T]/(T^{n+1})$-module to be flat.

\begin{prop}\label{artcrit}
Let $A=k[T]/(T^{n+1})$ and $M$ an $A$-module. Then $M$ is flat if and only if for every $m\in M$ such that $T^n\cdot m=0$ there exist $m'\in M$ such that $m=T\cdot m'$.
\end{prop}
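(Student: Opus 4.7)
The plan is to reduce flatness to the vanishing of $\mathrm{Tor}_1^A(A/I, M)$ for every ideal $I \subseteq A$. Since $A = k[T]/(T^{n+1})$ is Artinian local and its only ideals are the principal ones $(T^i)$ for $i = 0, 1, \ldots, n+1$, I would first observe that flatness of $M$ is equivalent to $\mathrm{Tor}_1^A(A/(T^i), M) = 0$ for each $i \in \{1, \ldots, n\}$.

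Next, for each such $i$, I would tensor the short exact sequence $0 \to (T^i) \to A \to A/(T^i) \to 0$ with $M$ and identify $(T^i) \cong A/(T^{n+1-i})$ via the map sending $1$ to $T^i$, whose kernel is precisely the annihilator of $T^i$, namely $(T^{n+1-i})$. The long exact sequence of Tor then yields
$$\mathrm{Tor}_1^A(A/(T^i), M) \;\cong\; (0 :_M T^i)\big/T^{n+1-i}M,$$
where the inclusion $T^{n+1-i}M \subseteq (0 :_M T^i)$ is automatic because $T^{n+1} = 0$ in $A$. Consequently, flatness of $M$ is equivalent to the system of equalities
$$(0 :_M T^i) \;=\; T^{n+1-i}M \qquad \text{for every } i \in \{1, \ldots, n\}.$$

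The hypothesis of the proposition is exactly the case $i = n$, so necessity follows directly from the above characterization. For sufficiency, I would establish the remaining cases by descending induction on $i$. Assuming $(0 :_M T^{i+1}) = T^{n-i}M$ and given $m \in M$ with $T^i m = 0$, multiplying by $T$ yields $T^{i+1}m = 0$, so by the inductive hypothesis $m = T^{n-i} m'$ for some $m' \in M$. Substituting back gives $T^n m' = T^i \cdot T^{n-i} m' = T^i m = 0$, and the base case hypothesis $i = n$ then furnishes $m'' \in M$ with $m' = T m''$, whence $m = T^{n+1-i}m''$, as required.

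I do not foresee a genuine obstacle: once the Tor computation reduces flatness to the explicit annihilator/ideal-power condition for each $i$, the descending induction propagates the single $i = n$ hypothesis to all smaller values of $i$. The content of the statement is essentially the pleasant fact that, for the principal-ideal structure of $A = k[T]/(T^{n+1})$, one lifting condition at the deepest level suffices to imply all the intermediate ones.
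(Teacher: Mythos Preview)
Your proof is correct and follows essentially the same strategy as the paper: both reduce flatness to injectivity of $M\otimes(T^i)\to M$ for each $i$ and then handle the ideals $(T^i)$ by an induction that repeatedly invokes the $i=n$ hypothesis. Your presentation is a bit cleaner, since identifying $\mathrm{Tor}_1^A(A/(T^i),M)\cong (0:_M T^i)/T^{n+1-i}M$ makes the inductive statement explicit; the paper carries out the same computation by manipulating elements $m\otimes T^j$ directly inside $M\otimes\mathcal{M}^{n-i}$, but the underlying argument is the same.
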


\begin{proof}[Proof.] 
Flatness of $M$ is equivalent to the injectivity of the map $M\otimes I\to M$ for every ideal $I\subset A$ (see e.g.:\cite[IV.1]{SGA1}). 
In this case there are finitely many ideals:
\[\mathcal{M}=(T),\ \mathcal{M}^2,\dots,\ \mathcal{M}^n.\]
If $M$ is flat is easy to see the second condition in our statement hold.
\par Suppose that  for every $m\in M$ such that $T^n\cdot m=0$ there exist $m'\in M$ such that $m=T\cdot m'$. Let $a\in M\otimes \mathcal{M}^{n-i}$ be in the kernel of $M\otimes \mathcal{M}^{n-i}\to M$. 
When $i=0$, we have $a=m\otimes T^n$, and $m$ is such that $T^n\cdot m=0$ so, by hypothesis, $m=T\cdot m'$ and then $m\otimes T^n= m'\otimes T^{n+1}=0$.
\par When $i>0$, we have $a=\sum_{j=n-i}^i m_j\otimes T^j$, so  $T^{i}\cdot a=m_{n-i}\otimes T^n\in M\otimes \mathcal{M}^n$.
By hypothesis,  $m_{n-i}=T\cdot m'$. So $a\in M\otimes \mathcal{M}^{n-i+1}$ and we are done by induction.
\end{proof}

The following will be useful in the study of foliations of codimension greater than $1$.

\begin{prop}\label{propstratfl}
Let $p:X\to S$ a projective morphism between schemes of finite type over an algebraically closed field $k$, $f: S\to Y$ another morphism, with $Y$ of finite type over $k$, and $\mathscr{F}$ a coherent sheaf over $X$.
Take a stratification $\coprod_i S_i\subseteq S$ of $S$ such that  $\mathscr{F}|_{S_i}:=\mathscr{F}\otimes_S \O_{S_i} $ is flat for all $i$.
If the composition $\coprod_i S_i\hookrightarrow S\xrightarrow{f} Y$ is a flat morphism, then $\mathscr{F}$ is flat over $Y$.

\end{prop}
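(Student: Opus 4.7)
The plan is to use the flatness criterion of \cref{rebusc}, reducing the question to showing $A$-flatness of the base change $\mathscr{F}_A$ after pulling back $Y$ along arcs $\spec(A) \to Y$ with $A$ either a DVR or an Artinian local ring of the form $k[T]/(T^{n+1})$. Under such a base change, the stratification $\coprod S_i$ of $S$, the relative flatness of each $\mathscr{F}|_{S_i}$ on $S_i$, and the $Y$-flatness of $\coprod S_i$ all pull back cleanly; in particular, each $S_{i,A} \to \spec(A)$ remains flat, so that $\mathscr{F}_A|_{S_{i,A}}$ is $\spec(A)$-flat for every $i$. It thus suffices to prove the proposition under the assumption that $Y = \spec(A)$ for such a local ring.

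From there I would proceed by Noetherian induction on the number of strata. Choose an open dense stratum $S_{i_0}$ of $S$, and set $Z = S \setminus S_{i_0}$, a closed subscheme stratified by the remaining $S_i$'s; note that $\coprod_{i \neq i_0} S_i \to \spec(A)$ is still flat, as a sub-disjoint-union of a flat morphism, so the inductive hypothesis yields that $\mathscr{F}|_{X_Z} = \mathscr{F} \otimes_S \mathcal{O}_Z$ is $A$-flat. At the same time, for $x$ over the open stratum $S_{i_0}$, the stalk $\mathscr{F}_x = (\mathscr{F}|_{S_{i_0}})_x$ is $A$-flat, being flat over $S_{i_0}$ which is flat over $\spec(A)$. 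To combine these I would tensor $\mathscr{F}$ with the exact sequence $0 \to \mathcal{I}_Z \to \mathcal{O}_S \to \mathcal{O}_Z \to 0$ (pulled back to $X$), producing
\[ 0 \to \mathcal{I}_Z \mathscr{F} \to \mathscr{F} \to \mathscr{F}|_{X_Z} \to 0. \]
Since the right-hand term is $A$-flat, the $A$-flatness of $\mathscr{F}$ is equivalent to that of $\mathcal{I}_Z \mathscr{F}$; and on the open $X_{S_{i_0}}$ this latter sheaf coincides with the $A$-flat $\mathscr{F}$, so the whole question reduces to $A$-flatness of $\mathcal{I}_Z \mathscr{F}$ at stalks lying over $X_Z$.

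The main obstacle is precisely this last step, where the flatness information on $X_Z$ and $X_{S_{i_0}}$ must be patched across the boundary. In the DVR case with uniformizer $\pi$, the strategy is to show that any $\pi$-torsion element of the stalk $(\mathcal{I}_Z \mathscr{F})_x$ lies, by iterating the fact that $\mathscr{F}_x / \mathcal{I}_Z \mathscr{F}_x = (\mathscr{F}|_{X_Z})_x$ is $\pi$-torsion-free, in arbitrarily high powers $\mathcal{I}_Z^k \cdot \mathscr{F}_x$; one then invokes Krull's intersection theorem in the Noetherian local ring $\mathcal{O}_{S,s}$ to see that $\bigcap_k \mathcal{I}_Z^k \cdot \mathscr{F}_x = 0$, concluding that the torsion vanishes. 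The Artinian case $A = k[T]/(T^{n+1})$ is handled by a parallel argument, using \cref{artcrit} with $T$ playing the role of the uniformizer in place of $\pi$.
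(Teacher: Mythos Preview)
Your reduction via \cref{rebusc} to the cases $A$ a DVR or $A=k[T]/(T^{n+1})$ matches the paper. After that, however, the paper takes a much shorter route than your Noetherian induction: it picks a $\pi$-torsion element $s\in\mathscr{F}_x$ (in the DVR case), considers its support $\mathrm{supp}_S(s)$ as an $\O_{S,p(x)}$-module, notes that this nonempty closed set must meet some stratum $S_i$, and derives an immediate contradiction from the fact that $\mathscr{F}|_{S_i}$ is flat over $S_i$, which is flat over $A$. The Artinian case is the same support trick with \cref{artcrit}. No induction on strata, no Krull intersection.

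There is moreover a genuine gap in your iteration step. From the single fact that $\mathscr{F}_x/\mathcal{I}_Z\mathscr{F}_x$ is $\pi$-torsion-free you can only conclude that a $\pi$-torsion $m$ lies in $\mathcal{I}_Z\mathscr{F}_x$; to push it into $\mathcal{I}_Z^2\mathscr{F}_x$ you need $\mathcal{I}_Z\mathscr{F}_x/\mathcal{I}_Z^2\mathscr{F}_x$ (equivalently $\mathscr{F}_x/\mathcal{I}_Z^2\mathscr{F}_x$) to be $\pi$-torsion-free, and this is a \emph{different} statement --- it is $A$-flatness of $\mathscr{F}$ restricted to the infinitesimal thickening $V(\mathcal{I}_Z^2)$, which your inductive hypothesis (flatness over $Z$ only) does not furnish. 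You can repair this by noting that the strata $\{S_i\}_{i\neq i_0}$ also stratify each thickening $V(\mathcal{I}_Z^k)$ and satisfy the same hypotheses there, so the inductive hypothesis applied to $V(\mathcal{I}_Z^k)$ for every $k$ gives $\mathscr{F}_x/\mathcal{I}_Z^k\mathscr{F}_x$ $\pi$-torsion-free, whence $m\in\bigcap_k\mathcal{I}_Z^k\mathscr{F}_x=0$ by Krull. But once you observe this, the short exact sequence with $\mathcal{I}_Z\mathscr{F}$ is superfluous, and the Artinian case (where the lifting condition of \cref{artcrit} does not pass to submodules as cleanly as ``$\pi$-torsion-free'' does) becomes more delicate than you suggest; the paper's support argument avoids all of this.
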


\begin{proof}[Proof.]
Invoking \cref{rebusc} we can, after applying base change, reduce to the case where $Y$ is either the spectrum of a DVR or $Y=\spec(k[T]/(T^{n+1}))$.
\par {\bf (i) Case $Y=\spec(A)$ with $A$ a DVR.} 
Suppose there is, for some point $x\in X$ a section $s\in \mathscr{F}_x$ that is of torsion over $A$. 
Consider $Z=\mathrm{supp}_S(s)\subseteq S$ the support of $s$ over $S$, that is the support of $s$ as an element of $\mathscr{F}_x$ considered as an $\O_{S, p(x)}$-module.
Now take any stratum $S_i$ and suppose $Z\cap S_i\neq \emptyset$.
Then there is a section of the pullback $\mathscr{F}_{S_i}$ that is of torsion over $A$. But $\mathscr{F}_{S_i}$ is flat over $S_i$ which is in turn flat over $A$, so $\mathscr{F}_{S_i}$ is flat and $Z\cap S_i$ must be empty for every stratum $S_i$, i.e.: $s=0$.
\par {\bf (ii) Case  $Y=\spec(k[T]/(T^{n+1}))$.} One can essentially repeat the argument above, now taking the section $s$ to be such that $T^n s=0$ but $s \notin T\cdot\mathscr{F}_x$. 
\end{proof}

\begin{cor}
Take the flattening stratification $\coprod_P S_P\subseteq S$, of $S$ with respect to $\mathscr{F}$. 
If the composition $\coprod_P S_P\hookrightarrow S\xrightarrow{f} Y$ is a flat morphism, then $\mathscr{F}$ is flat over $Y$.
\end{cor}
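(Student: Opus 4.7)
The corollary is an immediate specialization of \cref{propstratfl}, so the plan is essentially to verify that the hypotheses of the proposition hold for the flattening stratification and then invoke it directly.

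First I would recall the defining property of the flattening stratification $\coprod_P S_P \subseteq S$ of $S$ with respect to $\mathscr{F}$ (as used already in the proof of \cref{rebusc} via \cite{FGex}*{Section 5.4.2}): the pullback $\mathscr{F}|_{S_P} := \mathscr{F}\otimes_S \O_{S_P}$ is flat over $S_P$ for every stratum, and any morphism $T\to S$ along which $\mathscr{F}$ becomes flat factors uniquely through $\coprod_P S_P$. In particular, the flatness of $\mathscr{F}|_{S_P}$ on each stratum is built into the construction and requires no further verification.

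Then the hypotheses of \cref{propstratfl} are satisfied verbatim: we have a stratification $\coprod_P S_P \subseteq S$ on each piece of which $\mathscr{F}$ pulls back to a flat sheaf, and by assumption the composition $\coprod_P S_P \hookrightarrow S \xrightarrow{f} Y$ is flat. Applying \cref{propstratfl} with this stratification yields the flatness of $\mathscr{F}$ over $Y$, which is exactly the conclusion.

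Since this is purely a matter of unpacking definitions and applying the previous proposition, there is no real obstacle; the only thing worth emphasizing in writing it up is that the flattening stratification exists in our setting (projective $p:X\to S$ with $S$ and $Y$ of finite type over an algebraically closed field) so that the reference to \cite{FGex}*{Section 5.4.2} is legitimate.
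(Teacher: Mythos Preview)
Your proposal is correct and matches the paper's approach exactly: the paper states this as an immediate corollary of \cref{propstratfl} with no separate proof, and your argument simply spells out why the flattening stratification satisfies the stratification hypothesis of that proposition.
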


\section{Families of sub-sheaves and their dual families}\label{duales}

\begin{defs}
Given a short exact sequence of sheaves 
\[
0\to \mathscr{G} \xrightarrow{\iota} \mathscr{F} \to \mathscr{H}\to 0,
\]
we apply to it the left-exact contravariant functor $\mathscr{F}\mapsto \mathscr{F}^\vee:=\mathcal{H}om_X(\mathscr{F},\O_X)$ to obtain exact sequences:
\begin{align}
0\to \mathscr{H}^\vee \to &\mathscr{F}^\vee  \to \mathrm{Im}(\iota^\vee)\to 0, \label{dual}\\
0\to \mathrm{Im}(\iota^\vee) \to  \mathscr{G}^\vee \to   &\mathcal{E}xt_X^1(\mathscr{H},\O_X)\to \mathcal{E}xt_X^1(\mathscr{F},\O_X).\notag
\end{align}
 We say that the exact sequence \eqref{dual}, is the \emph{dual exact sequence} of $0\to \mathscr{G} \xrightarrow{\iota} \mathscr{F} \to \mathscr{H}\to 0$. 
\end{defs}

\begin{lema}\label{reflex}
Let $0\to N  \xrightarrow{\iota} T\xrightarrow{\pi}  M \to 0$ be a short exact sequence of $R$-modules such that $T$ is reflexive and $M$ is torsion free. Then  $\mathrm{Im}(\iota^\vee)^\vee=N$ and $M=\mathrm{Im}(\pi^{\vee\vee})$.
\end{lema}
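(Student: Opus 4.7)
The plan is to apply the functor $(-)^\vee$ twice and chase the resulting diagram, using reflexivity of $T$ to identify $T^{\vee\vee}=T$ and using torsion-freeness of $M$ to control $M\to M^{\vee\vee}$.

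\textbf{Step 1.} By the definition given just above the lemma, the first dualization of the sequence yields
\[
0\to M^\vee \to T^\vee \xrightarrow{q} \mathrm{Im}(\iota^\vee)\to 0,
\]
where $q$ is $\iota^\vee$ viewed as surjecting onto its image, and the first arrow is $\pi^\vee$.

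\textbf{Step 2.} Dualize this sequence once more. Since $\mathcal{H}om(-,R)$ is left exact, I get
\[
0\to \mathrm{Im}(\iota^\vee)^\vee \to T^{\vee\vee} \xrightarrow{\pi^{\vee\vee}} M^{\vee\vee}.
\]
By the reflexivity hypothesis on $T$, the evaluation map $ev_T:T\to T^{\vee\vee}$ is an isomorphism, and by naturality of $ev$ the diagram
\[
\xymatrix{
T\ar[r]^{\pi}\ar[d]_{ev_T}^{\cong} & M\ar[d]^{ev_M}\\
T^{\vee\vee}\ar[r]^{\pi^{\vee\vee}} & M^{\vee\vee}
}
\]
commutes. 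Thus, under the identification $T^{\vee\vee}=T$, the map $\pi^{\vee\vee}$ is precisely $ev_M\circ\pi$, and the exact sequence from Step 2 reads
\[
0\to \mathrm{Im}(\iota^\vee)^\vee \to T\xrightarrow{ev_M\circ\pi} M^{\vee\vee}.
\]

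\textbf{Step 3.} The key observation is that $ev_M:M\to M^{\vee\vee}$ is injective whenever $M$ is torsion-free in the context of the paper (e.g.\ coherent sheaves on an integral, normal variety, where torsion-free sheaves locally embed into a free sheaf). Granting this, $\ker(ev_M\circ\pi)=\ker(\pi)=N=\mathrm{Im}(\iota)$, so $\mathrm{Im}(\iota^\vee)^\vee=N$, giving the first claim. The second claim is then immediate: $\mathrm{Im}(\pi^{\vee\vee})=\mathrm{Im}(ev_M\circ\pi)=ev_M(M)\cong M$ via the injection $ev_M$.

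\textbf{Main obstacle.} The only nontrivial point is the injectivity of $ev_M:M\to M^{\vee\vee}$ from the torsion-free hypothesis. In full generality over an arbitrary ring this need not hold (torsion-free is weaker than torsionless), so I would be careful to state the lemma in the context where the paper applies it: namely coherent sheaves on the varieties considered in Proposition~\ref{refS2}, where a torsion-free sheaf embeds Zariski-locally into a free sheaf and hence into its double dual. Once that injectivity is in hand, the rest of the argument is a routine diagram chase.
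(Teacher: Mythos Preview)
Your proof is correct and follows essentially the same approach as the paper: dualize twice, use reflexivity of $T$ to identify $T^{\vee\vee}$ with $T$, use torsion-freeness of $M$ to ensure $M\hookrightarrow M^{\vee\vee}$ is injective, and conclude by comparing kernels. The paper phrases Step~3 as a diagram chase between the original sequence and the double-dual sequence rather than directly identifying $\pi^{\vee\vee}$ with $ev_M\circ\pi$, but the content is identical; your caveat about torsion-free versus torsionless is apt and implicitly assumed in the paper's setting.
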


\begin{proof}[Proof.]First we take the duals in the short exact sequence to get a sequence
\[ 0\to\hom_R(M,R)\xrightarrow{\pi^\vee}\hom_R(T,R)\xrightarrow{\iota^\vee} \mathrm{Im}(\iota^\vee)\to 0 \]
Then we take duals one more time and, given that $T$ is reflexive and that $M$ is torsion-free, we get the diagram
\[
\xymatrix{
	&0 \ar[r]  &\mbox{Im}(\iota^\vee)^\vee \ar[r] &T^{\vee\vee}\ar[r]^{\pi^{\vee\vee}} \ar@{=}[d] &M^{\vee\vee} \ar[r]  &\mathrm{ext}^1_R( \mbox{Im}(\iota^\vee), R)  \\
	&0\ar[r] &N \ar[u] \ar[r] &T \ar[r] &M\ar@{^(->}[u] \ar[r] &0,}
\]
whose rows are exact.
\par Chasing arrows we readily see that the leftmost vertical arrow must be an isomorphism. Indeed, since the monomorphism $N\to T^{\vee\vee}$ factorizes as 
\[ N\to \mbox{Im}(\iota^\vee)^\vee \to T^{\vee\vee},\]
the second arrow being a monomorphism, so must $N\to\mbox{Im}(\iota^\vee)^\vee$ be. 
On the other hand, given $a\in \mbox{Im}(\iota^\vee)^\vee$, we can regard it, via the inclusion, as an element in $T^{\vee\vee}=T$, so we can compute $\pi(a)$. 
As the canonical map $\theta:	M\to M^{\vee\vee}$ is an inclusion we have that, $\theta\circ\pi(a)=\pi^{\vee\vee}(a)=0$, then $\pi(a)=0$, so $a\in N$. From this we have  $N\cong \mbox{Im}(\iota^\vee)^\vee$, wich implies $M=\mathrm{Im}(\pi^{\vee\vee})$.
\end{proof}

\subsection{Exterior Powers}\label{productoexterior}
When dealing with foliations of codimension/dimension greater than $1$ is usually convenient to work with $p$-forms. 
We'll need then to compare sub-sheaves $I\subset \Omega^1_X$ with their exterior powers $\wedge^pI\subset \Omega^p_X$. 
In order to do that we include the following statements, valid in a wider context.
\newline
We'll concentrate on flat modules and their exterior powers. 
This will be important when dealing with flat families of Pfaff systems of codimension higher than $1$ (see \cref{p-formas}).

\begin{lema}\label{lex2}
Let $A$ be a ring containing the field $\Q$ of rational numbers, and let $M$ be a flat $A$-module.
 Then, for every $p$, $\wedge^p M$ is also flat.
\end{lema}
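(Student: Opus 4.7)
The plan is to exhibit $\wedge^p M$ as a direct summand of the tensor power $M^{\otimes p}$, and then invoke the elementary facts that tensor products of flat modules are flat, and that direct summands of flat modules are flat.

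First, since $M$ is flat, the iterated tensor product $M^{\otimes p}$ is flat over $A$. The symmetric group $S_p$ acts $A$-linearly on $M^{\otimes p}$ by permuting tensor factors, and because $\Q \subseteq A$ the element $p!$ is invertible, so one can form the antisymmetrization endomorphism
\[
e \;=\; \frac{1}{p!} \sum_{\sigma \in S_p} \mathrm{sgn}(\sigma)\, \sigma \;\colon\; M^{\otimes p} \longrightarrow M^{\otimes p},
\]
which is easily checked to be an $A$-linear idempotent.

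Next I would construct an explicit section of the canonical projection $\pi : M^{\otimes p} \twoheadrightarrow \wedge^p M$. Define
\[
\alpha : \wedge^p M \longrightarrow M^{\otimes p}, \qquad m_1\wedge\cdots\wedge m_p \;\longmapsto\; \frac{1}{p!}\sum_{\sigma\in S_p}\mathrm{sgn}(\sigma)\, m_{\sigma(1)}\otimes\cdots\otimes m_{\sigma(p)}.
\]
This map is well defined because the right-hand side is alternating in the $m_i$, and a direct computation shows $\pi\circ\alpha=\mathrm{id}_{\wedge^p M}$ (the $p!$ terms produced by $\pi$ coincide after applying the wedge and cancel the factor $1/p!$). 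In fact $\alpha\circ\pi=e$, so $\wedge^p M\cong \mathrm{Im}(e)$ appears as a direct summand of $M^{\otimes p}$, with complement $\ker(e)$.

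Finally, since a direct summand of a flat module is flat (for every ideal $I\subseteq A$ the functor $-\otimes_A I$ commutes with the splitting, so injectivity of $M^{\otimes p}\otimes I\to M^{\otimes p}$ implies injectivity of $\wedge^pM\otimes I\to \wedge^p M$), we conclude that $\wedge^p M$ is flat. The only delicate point of the argument is the availability of the antisymmetrizer, which is precisely why the hypothesis $\Q\subseteq A$ is imposed; without it the map $\alpha$ above would not be defined and $\wedge^p M$ need not be a summand of $M^{\otimes p}$.
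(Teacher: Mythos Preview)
Your proof is correct and follows essentially the same approach as the paper: both arguments show that $M^{\otimes p}$ is flat, use the antisymmetrizer (available because $\Q\subseteq A$) to exhibit $\wedge^p M$ as a direct summand of $M^{\otimes p}$, and conclude that a direct summand of a flat module is flat. The only cosmetic difference is that the paper phrases the splitting via the inclusion $\wedge^p M\hookrightarrow M^{\otimes p}$ with the antisymmetrizer as a retraction, whereas you use the quotient map $M^{\otimes p}\twoheadrightarrow\wedge^p M$ with an explicit section; and the paper deduces flatness of the summand via $\mathrm{Tor}_1$ distributing over direct sums rather than the ideal criterion.
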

\begin{proof}[Proof.]
If tensoring with $M$ is an exact functor, so is its iterate \break $-\otimes M\otimes\dots\otimes M$.
So $M^{\otimes p}$ is flat. 
As $A$ contains $\Q$, there is an anti-symmetrization operator 
\[ M^{\otimes p}\to \wedge^pM\]
which is a retraction of the canonical inclusion $\wedge^pM\subset M^{\otimes p}$.
This makes $\wedge^pM$ a direct summand of $M^{\otimes p}$, set $M^{\otimes p}=\wedge^pM \oplus R$ for some module $R$.
As the tensor power distributes direct sums (i.e.: ${(\wedge^pM\oplus R)\otimes N\cong}{ ( \wedge^pM\otimes N)\oplus (R\otimes N})$), so does their derived functors.
In particular we have, for every module $N$,
\[ 0=\mathrm{Tor}_1(M^{\otimes p},N)=\mathrm{Tor}_1(\wedge^pM,N)\oplus \mathrm{Tor}_1(R,N).\]
So $\wedge^p M$ is flat.
\end{proof}

Finally, we draw some conclusions regarding flat quotient. 
When dealing with Pfaff systems, we'll be interested in short exact sequence of the form 
\[ 0\to\wedge^p I\to \Omega^p_X\to \mathcal{G}\to 0, \]
arising from short exact sequences of flat modules
\[0\to I\to \Omega^1_X\to \Omega\to 0.\]
Note that, in general $\mathcal{G}\neq \wedge^p \Omega$.
Nevertheless, we can state:

\begin{prop}
Let $A$ be a ring containing $\Q$. 
Given an exact sequence
\[0\to M\to P\to N\to 0\]
of flat $A$-modules, we have an associated exact sequence 
\[0\to \wedge^pM\to \wedge^p P\to Q\to 0.\]
Then $Q$ is also flat.
\end{prop}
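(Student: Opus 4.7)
My plan is to construct a natural filtration on $\wedge^p P$ whose successive quotients are flat, and to deduce flatness of $Q$ from this.

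\textbf{Step 1 (the filtration).} For $0\le i\le p$, let $F^i\subseteq \wedge^p P$ be the image of the wedge map $\wedge^i M\otimes_A \wedge^{p-i}P\to \wedge^p P$, with conventions $F^0=\wedge^p P$ and $F^{p+1}=0$. I first need $F^p=\wedge^p M$, i.e.\ that the natural map $\wedge^p M\to \wedge^p P$ is injective. Because $M\hookrightarrow P$ with $P$ flat, iterated tensoring (using flatness of $M$ and $P$) shows $M^{\otimes p}\hookrightarrow P^{\otimes p}$; and since $A\supseteq \Q$, antisymmetrization exhibits $\wedge^p M$ and $\wedge^p P$ as direct summands of $M^{\otimes p}$ and $P^{\otimes p}$ respectively, in a compatible way, so $\wedge^p M\hookrightarrow \wedge^p P$.

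\textbf{Step 2 (identifying the graded pieces).} The claim is that for $0\le i\le p-1$ the wedge map induces an isomorphism
\[\wedge^i M\otimes_A \wedge^{p-i}N\;\xrightarrow{\sim}\; F^i/F^{i+1}.\]
Surjectivity is clear: any representative of a class in $F^i/F^{i+1}$ can be written with its last $p-i$ factors lifted to $P$, and modding out $F^{i+1}$ kills the ambiguity of lifting (any $M$-component in those factors pushes the class into $F^{i+1}$). For injectivity I would pass to the tensor algebra. The filtration on $P^{\otimes p}$ by ``the first $i$ tensor slots lie in $M$'' has graded pieces $M^{\otimes i}\otimes_A N^{\otimes(p-i)}$ (this uses exactness of $0\to M\to P\to N\to 0$ tensored with the flat modules $M^{\otimes i}$ and $P^{\otimes(p-i-1)}$, etc., which holds since all of $M,P,N$ are flat). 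The antisymmetrizer, averaged over $S_p$, is a projector compatible with the filtrations; its restriction to the graded piece $M^{\otimes i}\otimes N^{\otimes(p-i)}$ lands on $\wedge^i M\otimes \wedge^{p-i}N$ as a direct summand, and the image of $F^i$ under the projector is $F^i$. Matching the two yields the asserted isomorphism.

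\textbf{Step 3 (conclusion).} By the previous lemma each $\wedge^i M$ and $\wedge^{p-i}N$ is flat, and the tensor product of flat modules is flat; hence every $F^i/F^{i+1}$ is flat. Now $Q=\wedge^p P/\wedge^p M=F^0/F^p$ inherits the finite filtration $F^0/F^p\supseteq F^1/F^p\supseteq\cdots\supseteq F^{p-1}/F^p\supseteq 0$ with the same successive quotients $F^i/F^{i+1}$. Since an extension of flat modules is flat (from the $\mathrm{Tor}$ long exact sequence), a straightforward induction on the length of the filtration shows $Q$ is flat.

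\textbf{Main obstacle.} The delicate point is Step 2, the identification of the graded pieces. In the split case this reduces to the classical decomposition $\wedge^p(M\oplus N)=\bigoplus_{i+j=p}\wedge^i M\otimes \wedge^j N$; in our non-split situation, the combined use of flatness of the sequence (to ensure the associated graded of the tensor filtration is what one expects) and of characteristic zero (to transport the statement to $\wedge^p P$ via the antisymmetrizer) is exactly what is needed, and the bookkeeping relating the two filtrations is the technical core of the proof.
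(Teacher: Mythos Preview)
Your approach is exactly the paper's: filter $\wedge^p P$ so that the successive quotients are $\wedge^i M\otimes\wedge^{p-i}N$, observe these are flat by the preceding lemma, and conclude $Q=\wedge^pP/\wedge^pM$ is flat as an iterated extension of flats. The paper simply asserts the filtration and its graded pieces; you attempt to justify them, which is good.

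There is, however, a technical slip in Step~2. The filtration on $P^{\otimes p}$ by ``the first $i$ tensor slots lie in $M$'', i.e.\ $G^i=M^{\otimes i}\otimes P^{\otimes(p-i)}$, does \emph{not} have graded pieces $M^{\otimes i}\otimes N^{\otimes(p-i)}$: one computes $G^i/G^{i+1}\cong M^{\otimes i}\otimes N\otimes P^{\otimes(p-i-1)}$. Moreover this filtration is not $S_p$-stable, so the antisymmetrizer is not compatible with it. The fix is standard: use instead the $S_p$-stable filtration where $G^i$ is the sum, over all $i$-element subsets $S\subset\{1,\dots,p\}$, of the images of $\bigotimes_{j\in S}M\otimes\bigotimes_{j\notin S}P$. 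Flatness of $N$ then gives $G^i/G^{i+1}\cong\bigoplus_{|S|=i}M^{\otimes i}\otimes N^{\otimes(p-i)}$, and the antisymmetrizer (available since $\Q\subset A$) carries this to $\wedge^iM\otimes\wedge^{p-i}N$ on the wedge side. With that correction your argument goes through and coincides with the paper's.
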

\begin{proof}[Proof.] 
 Q inherit a filtration from $\wedge^pP$:
\[ Q=  \wedge^p P/\wedge^pM=\-{F}^0\supseteq \-{F}^1\supseteq\dots \supseteq \-{F}^p=0,\]
with quotients  
\[F^i/F^{i+1}\cong \wedge^iM\otimes \wedge^{p-i}N.\]
Then $Q$ have a filtration all of whose quotients are flat, so $Q$ itself is flat.
\end{proof}

\section{Families of distributions and Pfaff systems}\label{distypfaff}

We will consider subsheaves of the relative tangent sheaf $T_SX$ and the relative differentials $\Omega^{1}_{X|S}$.
\begin{defs}
A \emph{family of distributions} is a short exact sequence
\[
 0\to T\F\to T_S X\to N_\F\to 0.
\]
The family is called flat if $N_\F$ is flat over the base $S$.\newline
A family of distributions is called \emph{involutive} if it's closed under Lie bracket operation, that is, if for every pair of local sections $X$, $Y\in T\F(V)$, we have $[X,Y]\in T\F(V)$ where $[-,-]$ is the Lie bracket in $T_SX(V)$.\newline
\par Likewise, a \emph{family of Pfaff systems} is just a short exact sequence
\[
 0\to I(\F)\to \Omega^1_{X|S}\to  \Omega^1_{\F}\to 0.
\]
It's called flat if $ \Omega^1_{\F}$ is flat.\newline
We will say that a family of Pfaff systems is \emph{integrable} if 
$d(I(\F))\wedge \bigwedge^rI(\F)=0\subset \Omega^{r+2}_{X|S}$;
where $d:\Omega^j_{X|S}\to \Omega^{j+1}_{X|S}$ is the \underline{relative}  de Rham differential, and $r$ is the generic rank of the sheaf $\Omega^1_{\F}$.
\end{defs}
\begin{rmk}
Observe that the relative differential $d:\Omega^j_{X|S}\to \Omega^{j+1}_{X|S}$ is not an $\O_X$-linear morphism.
 It is, however, $f^{-1}\O_S$-linear, so the sheaf $d(I(\F))\wedge \bigwedge^rI(\F)$, whose annihilation encodes the integrability of the Pfaff system, is actually a sheaf of $f^{-1}\O_S$-modules.
\end{rmk}

In particular the dual to a family of distributions is a family of Pfaff systems and vice-versa.

\begin{rmk}
The dual of an \underline{involutive} family of distributions is an \underline{integrable} family of Pfaff systems. 
Reciprocally, the dual of an integrable family of Pfaff systems is a family of involutive distributions. 
This is just a consequence of the Cartan-Eilenberg formula for the de Rham differential of a $1$-form applied to vector fields
\[d\omega(X,Y)=X(\omega(Y))-Y(\omega(X))-\omega([X,Y]).\]
Indeed, as involutiveness and integrability can be checked locally over sections, we can proceed as in \cite[Prop. 2.30]{warner}.
\end{rmk}

\begin{defs}
The \emph{dimension} of a family of distribution is the generic rank of $T\F$. Likewise, the \emph{dimension} of a family of Pfaff systems is the generic rank of $\Omega^1_\F$.
\end{defs}

If $p: X\to S$ is moreover projective, $S$ is connected, and the family is flat, so $T\F$ is a flat sheaf over $S$. 
Then for every $s\in S$ the Hilbert polynomial of $T\F_s$ is the same, and so is its generic rank (being encoded in the principal coeficient of the polynomial). The same occurs with families of Pfaff systems. 

\begin{rmk}\label{p-formas}
Frequently, in the study of foliations of codimension higher than $1$, is more convenient and better adapted to calculations  to work with an alternative description of foliations. 
Namely, one can define a codimension $q$ foliation on a variety $X$ as in \cite{Med}, with a global section $\omega$ of $\Omega^q_X\otimes \mathcal{L}$ such that:
\begin{itemize}
	\item $\omega$ is locally decomposable, i.e.: there is, for all $x\in X$ an open set such that 
	\[ \omega=\eta_1\wedge\dots\wedge\eta_q,\]
	with $\eta_i\in \Omega^1_X$.
	\item $\omega$ is integrable, i.e.: $\omega\wedge d\eta_i=0,\ 1\leq i\leq q$. 
\end{itemize}•
With this setting, studying flat families of codimension $q$ foliations (meaning here families of integrable Pfaff systems) as in \cite{split} and \cite{rational}, parametrized by a scheme $S$, amounts to studying  short exact sequences of flat sheaves:
\[
 0\to \mathcal{L}^{-1}\to \Omega^q_{X|S}\to \mathcal{G}\to 0,
\]
that are locally decomposable and integrable. 
By the results of \cref{productoexterior} a flat family of codimension $q$ Pfaff systems given as a sub-sheaf of $\Omega^1_{X|S}$ give rise to a flat family in the above sense.
\end{rmk}

\section{Universal families}\label{univ}

Now lets take a non-singular projective scheme  $X$, a polynomial $P\in \Q[t]$,  and consider the following functor 

\begin{eqnarray*}
\mathfrak{Inv}^P(X):Sch&\longrightarrow& Sets\\
S&\mapsto&\left\{ \parbox{8cm}{flat families ${0\to T\F \to T_S(X\times S)\to N_\F\to 0}$ of involutive distributions  such that $N_\F$ have Hilbert polynomial $P(t)$.} \right\}.
\end{eqnarray*}•

Say $p:X\times S\to X$ is the projection, so $T_S(X\times S)= p^*TX$.
Clearly one have $\mathfrak{Inv}^P(X)$ is a sub-functor of $\mathfrak{Quot}^P(X,TX)$.
  We are going to show that $\mathfrak{Inv}^P(X)$ is actually a \emph{closed} sub-functor of $\mathfrak{Quot}^P(X,TX)$ and therefore also representable.

So take the smooth morphism given by the projection  
\[ p_1:\mathrm{Quot}_P(X,TX)\times X\to \mathrm{Quot}_P(X,TX).\]
 Here we are taking as base scheme $S=\mathrm{Quot}_P(X,TX)$, then on the total space $S\times X=\mathrm{Quot}_P(X,TX)\times X$ we have the natural short exact sequence
\[
0\to \mathscr{F}\to p_2^*TX=T_S (S\times X)\to \mathscr{Q}\to 0.
\]

Now we consider the push-forward of this sheaves by $p_1$, as $X$ is proper, this push-forwards are coherent sheaves over $S$. 
In particular we have  maps of coherent sheaves over $\mathrm{Quot}^P(X,TX)$
\[ p_{1*}\mathscr{F}\otimes_S p_{1*}\mathscr{F} \xrightarrow{[-,-]} p_{1*}T_S(S\times X) \to p_{1*}\mathscr{Q} \]
induced by the maps over $S\times X$.
Note that while the Lie bracket on $T_S(S\times X)$ is  only $p_1^{-1}\O_S$-linear, the  map induced on the push-forwards is $\O_S$-linear, so is a morphism of coherent sheaves.
We then also have for any $m,n\in\Z$ the twisted morphisms
\[ 
p_{1*}\mathscr{F}(m)\otimes_S p_{1*}\mathscr{F}(n) \xrightarrow{[-,-]} p_{1*}T_S(S\times X)(m+n) \to p_{1*}\mathscr{Q}(m+n).\]

Note also that, as $p_1$ is a projective morphism, then there exist an $n\in \Z$ such that for any $m\geq n$ the natural sheaves morphism  over $S\times X$, $p_1^*p_{1*}(\mathscr{F})(m)\to \mathscr{F}(m)$ is an epimorphism. So if for some $f:Z\to S$ and some $m\geq n$ one have that the composition 
\[
 f^*p_{1*}\mathscr{F}(m)\otimes_Z f^* p_{1*}\mathscr{F}(m) \xrightarrow{[-,-]}f^* p_{1*}T_S(S\times X)(2m) \to f^* p_{1*}\mathscr{Q}(2m)
\]
is zero, then the map 
\[ 
(f\times id)^*\mathscr{F}(m)\otimes_{\pi_1^{-1} \O_Z} (f\times id)^*\mathscr{F}(m) \xrightarrow{[-,-]}  T_Z(Z\times X)(2m) \to (f\times id)^*\mathscr{Q}(2m) 
\]
is zero as well, here $\pi_1:Z\times X\to Z$ is the projection, which is by the way the pull-back of $p_1$.

Now to conclude the representability of $\mathfrak{Inv}^P(X)$ we need one important lemma.

\begin{lema}
Let $S$ be a noetherian scheme, $p:X\to S$ a projective morphism and $\mathscr{F}$ a coherent sheaf on $X$. 
Then $\mathscr{F}$ is flat over $S$ if and only if there exist some integer $N$ such that for all $m\geq N$ the push-forwards $p_*\mathscr{F}(m)$ are locally free.
\end{lema}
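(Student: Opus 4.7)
The statement is the standard flatness criterion attributed to Mumford (or obtained by combining Serre's theorems with cohomology and base change). My plan is to prove the two implications separately, both via reduction to the graded-module picture in a relative projective space.

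\textbf{The forward direction} ($\mathscr{F}$ flat $\Rightarrow$ $p_*\mathscr{F}(m)$ locally free for $m\gg 0$). Since $p$ is projective, fix a relatively very ample $\O_X(1)$. By relative Serre vanishing, there exists $N_1$ such that $R^i p_*\mathscr{F}(m)=0$ for all $i\geq 1$ and all $m\geq N_1$. Combining this vanishing with the flatness of $\mathscr{F}$ over $S$, Grothendieck's cohomology and base change theorem yields that for $m\geq N_1$, the natural map
\[p_*\mathscr{F}(m)\otimes_{\O_S} k(s)\longrightarrow H^0(X_s,\mathscr{F}_s(m))\]
is an isomorphism for every $s\in S$. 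Flatness also ensures that the Hilbert polynomial $P_s$ of $\mathscr{F}_s$ is locally constant on $S$; by the vanishing of higher cohomology we have $\dim_{k(s)} H^0(X_s,\mathscr{F}_s(m))=P_s(m)$, so the fiber dimension of the coherent sheaf $p_*\mathscr{F}(m)$ is locally constant. A coherent sheaf whose fiber rank is locally constant and whose formation commutes with base change is locally free (one can reduce locally to the statement that a finitely generated module $M$ over a Noetherian local ring $(A,\mathfrak m)$ is free once $\dim_{k}M/\mathfrak m M=\dim_{k(\eta)}M\otimes k(\eta)$ at the generic point, combined with Nakayama); this gives local freeness of $p_*\mathscr{F}(m)$ for $m\geq N_1$.

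\textbf{The converse direction}. Flatness over $S$ is local, so I reduce to $S=\spec(A)$ affine Noetherian, and embed $X\hookrightarrow \PP^n_S$ via the projective morphism $p$. Viewing $\mathscr{F}$ as a coherent sheaf on $\PP^n_S$ does not change flatness over $S$. Consider the graded $A[x_0,\dots,x_n]$-module
\[M:=\bigoplus_{m\geq N} H^0(\PP^n_A,\mathscr{F}(m))=\bigoplus_{m\geq N}\Gamma(S,p_*\mathscr{F}(m)).\]
By hypothesis, each graded piece is a free $A$-module (localness is automatic once $S$ is replaced by a suitable affine cover), so $M$ is a flat $A$-module. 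By Serre's theorem, for $m\geq N$ the sheaf $\mathscr{F}(m)$ is the restriction of $\widetilde{M}$ to $X$; on the standard affine charts $U_i=\spec(A[x_0/x_i,\dots,x_n/x_i])$, the sheaf $\mathscr{F}$ is (up to a global twist) given by the degree-zero part $M_{(x_i)}$ of the localisation $M_{x_i}$. Both localisation and taking the degree-zero summand preserve flatness over $A$, so each restriction $\mathscr{F}|_{U_i}$ is $A$-flat, and therefore $\mathscr{F}$ is flat over $S$.

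\textbf{Main obstacle.} The delicate step is the forward direction: one needs the cohomology and base change theorem and the passage from "locally constant fiber rank plus base-change compatibility" to local freeness. The converse is routine once one recognises that it is simply a translation of the fact that flatness of the Serre module descends to flatness of the sheafification, because localisation at an element and extraction of the degree-zero graded piece preserve $A$-flatness.
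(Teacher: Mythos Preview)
The paper does not actually prove this lemma: it simply cites \cite{FGex}*{Lemma 5.5}. Your argument is essentially the standard one found in that reference (or in Hartshorne III.9.9, or in Mumford's \emph{Abelian Varieties}), so there is no meaningful difference in approach to discuss.

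One small gap is worth flagging in your forward direction. Your parenthetical justification --- that a finitely generated module $M$ over a Noetherian local ring $(A,\mathfrak m)$ is free once $\dim_k M/\mathfrak m M=\dim_{k(\eta)}M\otimes k(\eta)$ --- is false when $A$ is not reduced: take $A=k[\epsilon]/(\epsilon^2)$ and $M=A/(\epsilon)$, where the closed point is the generic point, the condition is trivially met, yet $M$ is not free. Since the lemma makes no reducedness assumption on $S$, this route does not work. The fix is that the detour through ``locally constant fibre rank'' is unnecessary: once you have invoked cohomology and base change to get that $\varphi^0(s)$ is surjective (indeed an isomorphism) for all $s$, the same theorem (e.g.\ Hartshorne III.12.11(b), applied with $i=0$) gives local freeness of $p_*\mathscr{F}(m)$ directly, because $\varphi^{-1}(s)$ is trivially surjective. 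Your converse direction via the graded Serre module is correct as written.
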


\begin{proof}[Proof.]
This is \cite{FGex}*{Lemma 5.5}.
\end{proof}

We can then take $m\in Z$ big enough so $p_{1*}\mathscr{F}(m)$ and $p_{1*}\mathscr{Q}(2m)$ are locally free and the morphism 
 $p_1^*p_{1*}(\mathscr{F})(m)\to \mathscr{F}(m)$ is epimorphism.
Then we can regard the composition 
\[ 
p_{1*}\mathscr{F}(m)\otimes_S p_{1*}\mathscr{F}(m) \xrightarrow{[-,-]} p_{1*}T_S(S\times X)(2m) \to p_{1*}\mathscr{Q}(2m)
\]
as a global section $\sigma$ of the locally free sheaf $\mathcal{H}om_S( p_{1*}\mathscr{F}(m)\otimes_S p_{1*}\mathscr{F}(m),  \mathscr{Q}(2m))$.
We can then make the following definition.

\begin{defs}
We define the scheme $\mathrm{Inv}^P(X)$ to be the zero scheme $Z(\sigma)$ (cf.: \cref{dzs}) of the section $\sigma$ defined above.
\end{defs} 

A direct application of \cref{pzs} to this definition together with the discussion so far immediately gives us the following.

\begin{prop}\label{proprep1}
The subscheme $\mathrm{Inv}^P(X)\subseteq \mathrm{Quot}^P(X,TX)$ represents the functor $\mathfrak{Inv}^P(X)$.
\end{prop}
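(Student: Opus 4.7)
The plan is to deduce the proposition directly from \cref{pzs}. By the universal property of $\mathrm{Quot}^P(X,TX)$, a morphism $f:T\to \mathrm{Quot}^P(X,TX)$ corresponds to a flat family
\[ 0\to (f\times\mathrm{id})^*\mathscr{F}\to T_T(T\times X)\to (f\times\mathrm{id})^*\mathscr{Q}\to 0 \]
with $(f\times\mathrm{id})^*\mathscr{Q}$ having Hilbert polynomial $P$. So what must be shown is that $f$ factorizes through $Z(\sigma)=\mathrm{Inv}^P(X)$ if and only if $(f\times\mathrm{id})^*\mathscr{F}$ is closed under the Lie bracket of $T_T(T\times X)$, i.e.\ the pulled-back family is involutive.

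By \cref{pzs} applied to the locally free sheaf $\mathscr{E}=\mathcal{H}om_S(p_{1*}\mathscr{F}(m)\otimes p_{1*}\mathscr{F}(m),\mathscr{Q}(2m))$, factorization through $Z(\sigma)$ is equivalent to the vanishing of $f^*\sigma$, i.e.\ to the vanishing of
\[ f^*p_{1*}\mathscr{F}(m)\otimes_T f^*p_{1*}\mathscr{F}(m)\xrightarrow{[-,-]} f^* p_{1*}\mathscr{Q}(2m). \]
The discussion preceding the proposition already shows the implication I need in one direction: whenever this composition vanishes, using that $m$ was chosen so that $p_1^*p_{1*}\mathscr{F}(m)\twoheadrightarrow \mathscr{F}(m)$ is an epimorphism (preserved under arbitrary base change along $f$, by the cohomology and base change theorem together with local freeness of $p_{1*}\mathscr{F}(m)$), the induced map
\[ (f\times\mathrm{id})^*\mathscr{F}(m)\otimes_{\pi_1^{-1}\O_T}(f\times\mathrm{id})^*\mathscr{F}(m)\xrightarrow{[-,-]} (f\times\mathrm{id})^*\mathscr{Q}(2m) \]
is also zero, and untwisting by $\O(-2m)$ shows the unshifted bracket $(f\times\mathrm{id})^*\mathscr{F}\otimes(f\times\mathrm{id})^*\mathscr{F}\to (f\times\mathrm{id})^*\mathscr{Q}$ vanishes, which is exactly involutivity.

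For the converse I would argue as follows: if the family is involutive, then the bracket $(f\times\mathrm{id})^*\mathscr{F}\otimes(f\times\mathrm{id})^*\mathscr{F}\to (f\times\mathrm{id})^*\mathscr{Q}$ is identically zero over $T\times X$; pushing forward along $\pi_1$ (twisted by $(2m)$) and using that push-forward and pull-back along $f$ commute for the sheaves in question (again by cohomology and base change in the appropriate flat range) yields $f^*\sigma=0$. The main obstacle I anticipate is precisely this bookkeeping around base change: one must verify that $m$ can be chosen uniformly so that for \emph{every} test scheme $T$ and every $f$, the formation of $p_{1*}\mathscr{F}(m)$, $p_{1*}\mathscr{Q}(2m)$ and the epimorphism $p_1^*p_{1*}\mathscr{F}(m)\twoheadrightarrow \mathscr{F}(m)$ all commute with base change along $f$. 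This is a standard consequence of flatness over $\mathrm{Quot}^P(X,TX)$ together with Castelnuovo--Mumford regularity, but it is the one place where care is required; once it is in place, \cref{pzs} finishes the proof immediately.
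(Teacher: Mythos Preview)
Your proposal is correct and follows exactly the paper's approach: the paper's own proof is the single sentence ``A direct application of \cref{pzs} to this definition together with the discussion so far immediately gives us the following,'' and you have simply spelled out what that application entails. The base-change bookkeeping you flag is indeed the only point requiring care, and your justification via flatness and Castelnuovo--Mumford regularity is the standard one.
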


\par \bigskip Similarly we can consider the sub-functor $\mathfrak{iPf}^P(X)$ of $\mathfrak{Quot}^P(X,\Omega^1_X)$.

\begin{eqnarray*}
\mathfrak{iPf}^P(X):Sch&\longrightarrow& Sets\\
S&\mapsto&\left\{ \parbox{8cm}{flat families ${0\to I(\F) \to \Omega^1{X|S}\to \Omega^1_\F\to 0}$ of integrable Pfaff systems  such that $\Omega^1_\F$ have Hilbert polynomial $P(t)$.} \right\}.
\end{eqnarray*}•

Then as before we take $S=\mathrm{Quot}^P(X,\Omega^1_X)$ and consider the map
\[ p_{1*}(d(\mathscr{I})\wedge \bigwedge^r\mathscr{I})(m) \longrightarrow p_{1*}\Omega^{r+2}_{S\times X|S}(m).\]
 
Which is, for large enough $m$ a morphism between locally free sheaves on $S$. 

\begin{defs}
We define the scheme $\mathrm{iPf}^P(X)$ to be the zero scheme of the above morphism, viewed as a global section of the locally free
sheaf $\mathcal{H}om( p_{1*}(d(\mathscr{I})\wedge \bigwedge^r\mathscr{I})(m),  p_{1*}\Omega^{r+2}_{S\times X|S}(m))$.
\end{defs}

And then by \cref{pzs} we have representability.

 \begin{prop}\label{proprep2}
The subscheme  $\mathrm{iPf}^P(X)\subset \mathrm{Quot}^P(X,\Omega^1_X)$ represents the functor  $\mathfrak{iPf}^P(X)$.
\end{prop}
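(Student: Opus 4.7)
The plan is to mirror the construction and argument preceding \cref{proprep1}, replacing the Lie bracket on $p_{1*}\mathscr{F}(m)\otimes p_{1*}\mathscr{F}(m)$ by the integrability obstruction $d(\mathscr{I})\wedge\bigwedge^r\mathscr{I}$. First I would fix notation on the base $S:=\mathrm{Quot}^P(X,\Omega^1_X)$, where the universal short exact sequence
\[0\to \mathscr{I}\to \Omega^1_{S\times X|S}\to \mathscr{Q}\to 0\]
has $\mathscr{Q}$ flat over $S$ with Hilbert polynomial $P$. Since $\Omega^1_{S\times X|S}$ is locally free and $\mathscr{Q}$ is $S$-flat, $\mathscr{I}$ is $S$-flat, and by \cref{lex2} so is each $\bigwedge^r\mathscr{I}$. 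Consequently the image subsheaf $d(\mathscr{I})\wedge\bigwedge^r\mathscr{I}\subseteq \Omega^{r+2}_{S\times X|S}$ is a coherent $\mathcal{O}_{S\times X}$-submodule, and its formation is compatible with any base change $f:T\to S$, because the relative de Rham differential $d$ and the wedge product are both base-change compatible.

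Next I would twist by $m\gg 0$ and push forward along $p_1$. Invoking \cite{FGex}*{Lemma 5.5} together with relative Serre vanishing, I can choose a single integer $m$ such that each of the sheaves $p_{1*}\mathscr{I}(m)$, $p_{1*}(\bigwedge^r\mathscr{I})(m)$, $p_{1*}(d(\mathscr{I})\wedge\bigwedge^r\mathscr{I})(m)$ and $p_{1*}\Omega^{r+2}_{S\times X|S}(m)$ is locally free, its formation commutes with arbitrary base change, and the evaluation map $p_1^*p_{1*}(-)(m)\twoheadrightarrow (-)(m)$ is surjective. The inclusion $d(\mathscr{I})\wedge\bigwedge^r\mathscr{I}\hookrightarrow \Omega^{r+2}_{S\times X|S}$ then pushes forward to an $\mathcal{O}_S$-linear morphism of locally free sheaves on $S$, that is, to the global section $\tau$ of the $\mathcal{H}om$-sheaf used to define $\mathrm{iPf}^P(X)=Z(\tau)$.

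The final step is to apply \cref{pzs} directly: a morphism $f:T\to S$ factors through $\mathrm{iPf}^P(X)$ if and only if the pulled-back section $f^*\tau$ vanishes on $T$. Using the surjectivity of the chosen evaluation maps together with the base-change compatibility above, this vanishing is equivalent to the vanishing of the integrability morphism on $T\times X$, namely $d((f\times\mathrm{id})^*\mathscr{I})\wedge\bigwedge^r(f\times\mathrm{id})^*\mathscr{I}=0$ in $\Omega^{r+2}_{T\times X|T}$. This is the exact analogue of the parenthetical surjectivity argument carried out in the involutive case. Hence $f$ factors through $\mathrm{iPf}^P(X)$ if and only if the pulled-back family is an integrable Pfaff system with Hilbert polynomial $P$, which is the functor of points of $\mathfrak{iPf}^P(X)$.

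The main obstacle is the bookkeeping of the Serre-vanishing step: one must verify that a single $m$ can be chosen to simultaneously trivialize (i.e., make locally free and base-change compatible) the four coherent sheaves listed above, and to make the evaluation map $p_1^*p_{1*}(-)(m)\twoheadrightarrow(-)(m)$ surjective on all of them. This is the technical heart of the argument, but it is precisely the same uniform-$m$ argument already used (and tacitly justified) in the construction of $\sigma$ before \cref{proprep1}; once it is in place, representability follows at once from \cref{pzs}.
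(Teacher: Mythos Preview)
Your proposal is correct and follows essentially the same approach as the paper: the paper's proof is the single line ``And then by \cref{pzs} we have representability,'' relying on the construction of the section $\tau$ immediately preceding the definition of $\mathrm{iPf}^P(X)$, and you have simply unpacked that construction with more care (flatness of $\mathscr{I}$ and its exterior powers via \cref{lex2}, base-change compatibility of $d$ and $\wedge$, and the uniform choice of $m$). Your honest flagging of the Serre-vanishing bookkeeping as the technical crux is apt, since the paper leaves exactly the same point tacit in both the involutive and the Pfaff cases.
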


\section{Duality}\label{duality}

\begin{defs}\label{singf}
The \emph{singular locus} of a family of distributions  \break$ 0\to T\F\to T_S X\to N_\F\to 0$ is the (scheme theoretic) support of $\mathcal{E}xt^1_X(N_\F,\O_X)$. Intuitively, its points are the points where $N_\F$ fails to be a fiber bundle.\par
Similarly, for a family of Pfaff systems $ 0\to I(\F)\to \Omega^1_{X|S}\to  \Omega^1_{\F}\to 0$, its singular locus is $\mathrm{supp}( \mathcal{E}xt^1_X(\Omega^1_{\F},\O_X))$.
\end{defs}

\begin{rmk}
Call $i:T\F\to T_SX$ the inclusion.
We have an open non-empty set $U$ where, for every $x\in U$,  $\dim(Im(i\otimes k(x)))$ is maximal.
More precisely, $U$ is the open set where $Tor_1^X(N_\F, k(x))=0$, which is the maximal open set such that $N_\F|_U$ is locally free, and therefore so is $T\F$.
Then, when restricted to $U$, $T\F$ can be given locally as the subsheaf of $T_SX$ generated by $k$ linearly independent relative vector fields, i.e.: $T\F$ defines a family of non-singular foliations. 
In $U$, one have that $\mathcal{E}xt^1_X(N_\F, \O_X)=0$. 
Then, the underlying topological space of the singular locus of the family given by $T\F$ is the singular set of the foliation in a classical (topologial space) sense. 
\par The above discussion translates verbatim to families of Pfaff systems.
\end{rmk}

\begin{prop}
Let 
\[
 0\to I(\F)\to \Omega^1_{X|S}\to  \Omega^1_{\F}\to 0 \]
be a family of Pfaff systems such that $ \Omega^1_{\F}$ is torsion-free. Its singular locus and the singular locus of the dual family
\[
 0\to T\F\to T_S X\to N_\F\to 0 \]
are the same sub-scheme of $X$. We denote this sub-scheme by $\mathrm{sing}(\F)$
\end{prop}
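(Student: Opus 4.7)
The plan is to identify each $\mathcal{E}xt^1$ sheaf as a concrete cokernel via the dual exact sequences of \cref{duales}, and then verify that the two cokernels define the same ideal sheaf on $X$.

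Applying $\mathcal{H}om(-, \O_X)$ to the Pfaff sequence and using that $\Omega^1_{X|S}$ is locally free (so $\mathcal{E}xt^1(\Omega^1_{X|S}, \O_X) = 0$) yields the four-term exact sequence
\[ 0 \to T\F \to T_SX \to I(\F)^\vee \to \mathcal{E}xt^1(\Omega^1_\F, \O_X) \to 0, \]
which identifies $T\F = (\Omega^1_\F)^\vee$, realizes $N_\F$ as $\mathrm{Im}(T_SX \to I(\F)^\vee)$, and gives $\mathcal{E}xt^1(\Omega^1_\F, \O_X) \cong I(\F)^\vee/N_\F$. Applying \cref{reflex} to the Pfaff sequence (with $\Omega^1_{X|S}$ reflexive and $\Omega^1_\F$ torsion-free) yields $N_\F^\vee = I(\F)$, and together with the second conclusion of \cref{reflex} (that $\Omega^1_\F = \mathrm{Im}(\pi^{\vee\vee})$ inside $(\Omega^1_\F)^{\vee\vee}$), dualizing the distribution sequence produces
\[ 0 \to I(\F) \to \Omega^1_{X|S} \to (\Omega^1_\F)^{\vee\vee} \to \mathcal{E}xt^1(N_\F, \O_X) \to 0, \]
and consequently $\mathcal{E}xt^1(N_\F, \O_X) \cong (\Omega^1_\F)^{\vee\vee}/\Omega^1_\F$.

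The remaining and main step is to prove $\mathrm{Ann}(I(\F)^\vee/N_\F) = \mathrm{Ann}((\Omega^1_\F)^{\vee\vee}/\Omega^1_\F)$ as ideal sheaves on $X$. Working locally at $x \in X$, the stalks $I(\F)_x$ and $T\F_x$ are mutually annihilating reflexive submodules of the dual pair of locally free modules $(\Omega^1_{X|S})_x$ and $(T_SX)_x$. On the open set $U \subseteq X$ where both $I(\F)$ and $T\F$ are locally free, one chooses local presentations by matrices $A$ (for $I(\F)$) and $B$ (for $T\F$) satisfying $B^T A = 0$; the classical complementary-minors identity then shows that both annihilators, restricted to $U$, coincide with the common ideal generated by the maximal minors of $A$ (equivalently of $B$).

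The main obstacle is to propagate this equality from $U$ to all of $X$: because $I(\F)$ and $T\F$ are only assumed reflexive, the matrix-presentation argument can fail along $X \setminus U$. By \cref{refS2} the complement has codimension $\geq 2$, so the remaining task reduces to a $Z^{(2)}$-closure argument: since $I(\F)^\vee = N_\F^{\vee\vee}$ and $(\Omega^1_\F)^{\vee\vee}$ are reflexive, hence $Z^{(2)}$-closed, the cokernels $I(\F)^\vee/N_\F$ and $(\Omega^1_\F)^{\vee\vee}/\Omega^1_\F$ (together with their annihilator ideals in $\O_X$) are determined by their restrictions to $U$, and the equality established there propagates globally.
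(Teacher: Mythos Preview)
Your proof has a genuine gap in the final propagation step, and the paper's argument proceeds along an entirely different route.

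The $Z^{(2)}$-closure argument does not do what you claim. The sheaves $I(\F)^\vee$ and $(\Omega^1_\F)^{\vee\vee}$ are indeed reflexive and hence $Z^{(2)}$-pure, but the quotients $I(\F)^\vee/N_\F$ and $(\Omega^1_\F)^{\vee\vee}/\Omega^1_\F$ are torsion sheaves supported on $\mathrm{sing}(\F)$ and are certainly \emph{not} $Z^{(2)}$-closed. More to the point, an ideal sheaf $J\subseteq\O_X$ is in general not determined by its restriction to an open set whose complement has codimension $\geq 2$: take $J_1=\O_X$ and $J_2=\mathcal I_Z$ for any closed subscheme $Z\subseteq X\setminus U$; these agree on $U$ but differ globally. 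Since the annihilator ideals you are comparing cut out subschemes that may have codimension $\geq 2$ (and may even meet $X\setminus U$), equality on $U$ tells you nothing about equality on $X$. Also, the citation of \cref{refS2} for ``the complement has codimension $\geq 2$'' is off: that proposition says reflexive implies relative $S_2$, not that reflexive sheaves are locally free in codimension $2$.

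A smaller issue is the step on $U$ itself. The ``complementary minors identity'' would at best show that the \emph{Fitting} ideals $I_q(A)=I_p(B)$ coincide, but $\mathrm{Fitt}_0$ and $\mathrm{Ann}$ differ in general. A correct argument on $U$ goes as follows: with $M=\mathrm{coker}(A^{T})$ and $M'=\mathrm{coker}(B^{T})$, the mutually exact complexes $0\to R^p\xrightarrow{B}R^n\xrightarrow{A^T}R^q\to M\to 0$ and $0\to R^q\xrightarrow{A}R^n\xrightarrow{B^T}R^p\to M'\to 0$ exhibit $M'\cong\mathcal Ext^2(M,\O_X)$ and $M\cong\mathcal Ext^2(M',\O_X)$, whence $\mathrm{Ann}(M)\subseteq\mathrm{Ann}(M')$ and conversely. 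But this still only works where $I(\F)$ and $T\F$ are locally free, so it does not rescue the global conclusion.

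The paper avoids all of this by comparing the two supports through their functors of points. The key input is the implication: if $\mathcal Ext^1_X(N_\F,\O_X)$ vanishes on an open set then $N_\F$ is locally free there, hence so is $T\F$, and by \cref{reflex} one gets $\Omega^1_\F=T\F^\vee$ locally free, forcing $\mathcal Ext^1_X(\Omega^1_\F,\O_X)=0$ on that same open set (and symmetrically). This, together with Nakayama, is enough to show that every subscheme $T\hookrightarrow X$ factors through one support iff it factors through the other.
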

\begin{proof}[Proof.]
We are going to show that the immersions $Y_1:=\mathrm{supp}( \mathcal{E}xt^1_X(\Omega^1_{\F},\O_X))\subseteq X$ and $Y_2:=\mathrm{supp}( \mathcal{E}xt^1_X(N_\F,\O_X))\subseteq X$ represent the same sub-functor of $\mathrm{Hom}(-,X)$, thus proving the proposition.\newline
\newline
First note that, if  $\mathcal{E}xt^1_X(N_\F,\O_X)=0$, then $ \mathcal{E}xt^1_X(\Omega^1_{\F},\O_X)=0$.\newline
Indeed, if  $\mathcal{E}xt^1_X(N_\F,\O_X)=0$, $N_\F$ is locally free and then so is $T\F$. Moreover, since $\Omega^1_{\F}$ is torsion free, we can dualize the short exact sequence $ 0\to T\F\to T_S X\to N_\F\to 0 $ and, by lemma \ref{reflex}, obtain the equality $\Omega^1_{\F}=T\F^\vee$.\newline
 So $\Omega^1_{\F}$ is locally free and $ \mathcal{E}xt^1_X(\Omega^1_{\F},\O_X)=0$.\newline
\newline
Now, given a quasi-coherent sheaf $\mathscr{G}$ of $X$, its support $\mathrm{supp}(\mathscr{G})\subseteq X$ represents the following sub-functor of $\mathrm{Hom}(-,X)$:
\[ T\longmapsto \{f:T\to X\quad \text{s.t.: $f^*\mathscr{G}$ is not a torsion sheaf}\}\subseteq \mathrm{Hom}(T,X).\]
So, let's take a morphism $f:T\to  Y_1\subseteq X$. 
\begin{enumerate}[(i)]	
	\item $f:T\to  Y_1$ is an immersion:
		 Suppose   $f^*\mathcal{E}xt^1_X(N_\F,\O_X)$ is a torsion sheaf. \break Then there's a point $t\in T$ such that
	\[ \mathcal{E}xt^1_X(N_\F,\O_X)\otimes k(t)=0.\]
 By Nakayama's lemma this implies that there's an open subset $U\subseteq X$ containing $t$ such that $ \mathcal{E}xt^1_X(N_\F,\O_X)|_U =0$. This in turn implies  $ \mathcal{E}xt^1_X(\Omega^1_{\F},\O_X)|_U =0$ contradicting the fact that $t\in T\subseteq Y_1$. Then $T\subseteq Y_2$.\newline
	Similarly one proves that if $T\subseteq Y_2$ then $T\subseteq Y_1$.

	\item General case: Taking the scheme theoretic image of $f$ we can reduce to the above case where $T$ is a sub-scheme of $X$.
\end{enumerate}
\end{proof}

\subsection{The codimension $1$ case}\label{codim1case}
We now treat the case of families of codimension $1$ foliations. From now on we'll suppose that $X\to S$ is a smooth morphism.
\begin{defs} A family of involutive distributions 
\[
 0\to T\F\to T_S X\to N_\F\to 0,
\]
 is of \emph{codimension} $1$ iff $N_\F$ is a sheaf of generic rank $1$.
\par Likewise a family of Pfaff systems 
\[
 0\to I(\F)\to \Omega^1_{X|S}\to  \Omega^1_{\F}\to 0,
\]
is of codimension $1$ if the sheaf $I(\F)$ have generic rank $1$.
\end{defs}
\begin{lema}
Let be  a family of codimension $1$ Pfaff systems 
\[
 0\to I(\F)\to \Omega^1_{X|S}\to  \Omega^1_{\F}\to 0,
\]
over an integral scheme $X$, such that $\Omega^1_{\F}$ is torsion-free.
Then $I(\F)$ is a line-bundle over $X$.
\end{lema}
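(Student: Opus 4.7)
The plan is to establish that $I(\F)$ is a reflexive coherent sheaf of generic rank $1$ on $X$, and then conclude via local factoriality that such a sheaf must be invertible.

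First, since $X\to S$ is smooth, $\Omega^1_{X|S}$ is locally free, hence reflexive; in particular any subsheaf of it—including $I(\F)$—is torsion-free. Naming the inclusion $\iota\colon I(\F)\hookrightarrow \Omega^1_{X|S}$, we apply \cref{reflex} to the given short exact sequence with $N=I(\F)$, $T=\Omega^1_{X|S}$ (reflexive) and $M=\Omega^1_\F$ (torsion-free by hypothesis). The conclusion of that lemma gives the identification $I(\F)\cong \mathrm{Im}(\iota^\vee)^\vee$, exhibiting $I(\F)$ as a dual sheaf. By the corollary in \cref{prel} stating that every dual of a coherent sheaf is reflexive, we deduce that $I(\F)$ is itself reflexive.

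Next, we pass to stalks. For each $x\in X$, the smoothness of $X\to S$ (together with integrality of $X$) makes $\O_{X,x}$ a regular local ring, hence a unique factorization domain by Auslander--Buchsbaum. A reflexive module of rank $1$ over a UFD is always free: any rank-one torsion-free module embeds into its field of fractions and is therefore isomorphic to a fractional ideal; reflexivity forces this ideal to be divisorial, and on a UFD every divisorial ideal is principal. Hence $I(\F)_x\cong \O_{X,x}$ for every $x$, so $I(\F)$ is locally free of rank $1$, i.e.\ a line bundle.

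The only real technical step is the verification that the hypotheses of \cref{reflex} apply, after which reflexivity of $I(\F)$ is automatic; the remaining step from ``reflexive of rank $1$'' to ``invertible'' is standard commutative algebra on a regular scheme. If one wanted to drop the regularity of $X$, one would have to replace the UFD argument by a more delicate one (using for instance \cref{refS2} and a careful local analysis), but under the paper's standing smoothness assumption no such refinement is needed.
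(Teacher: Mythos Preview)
Your first step---using \cref{reflex} to exhibit $I(\F)$ as a dual sheaf and hence reflexive---is exactly what the paper does. The gap is in the second step.

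You assert that smoothness of $X\to S$ together with integrality of $X$ makes each $\O_{X,x}$ a regular local ring. This is false: smoothness of a morphism guarantees regularity of the \emph{fibres} $X_s$, not of the total space. The total space $X$ is regular only if $S$ is regular, and the lemma makes no such assumption. In the paper's principal applications $S$ is a Quot scheme or a closed subscheme $\mathrm{Inv}^P(X_0)$ thereof, which is typically singular and often non-reduced; the total space $X_0\times S$ then fails to be regular and your Auslander--Buchsbaum/UFD argument does not apply. (Your own closing remark anticipates exactly this difficulty, but the hypothesis you rely on to dismiss it is not available.)

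The paper bypasses local factoriality entirely. Having established that $I=I(\F)$ is reflexive of generic rank $1$, it considers the evaluation map $I^\vee\otimes I\to\O_X$, observes that $I^\vee\otimes I$ is self-dual, and dualizes to obtain $\O_X\to I^\vee\otimes I$. The composite $\O_X\to I^\vee\otimes I\to\O_X$ is then nonzero on an integral scheme, hence injective, hence (being an endomorphism of $\O_X$) invertible; this forces $I$ to be invertible. That argument uses only integrality of $X$, which is all the lemma assumes.
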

\begin{proof}[Proof.] 
If  $\Omega^1_{\F}$ is torsion-free, by \cref{reflex} we have $I(\F)\cong N_\F^\vee$.
In particular $I(\F)$ is the dual of a sheaf, then is reflexive and observes property $S_2$.
Write $I=I(\F)$ and consider now the sheaf $I^\vee\otimes I$ together with the canonical morphism
\[ I^\vee\otimes I\to \O_X.\]
The generic rank of $I^\vee\otimes I$ is $1$. 
As $I$ is reflexive, $I^\vee\otimes I$ is self-dual.
So the canonical morphism above induces the dual morphism $\O_X\to I^\vee\otimes I$.
The composition
\[\O_X\to I^\vee\otimes I \to \O_X \]
must be invertible, otherwise the image of $I^\vee\otimes I$ in $\O_X$ would be a torsion sub-sheaf.
Then $I$ is an invertible sheaf.
\end{proof}

\begin{prop}\label{propdual1}
In the case of codimension $1$ Pfaff systems, if $\Omega^1_{\F}$ is torsion-free over $X$ and the inclusion $I(\F)\to \Omega^1_{X|S}$ is nowhere trivial on $S$ (meaning that $I(\F)\otimes \O_T\to \Omega^1_{X|S}\otimes \O_T$ is never the zero morphism for any $T\to S$) then the family is automatically flat.
\end{prop}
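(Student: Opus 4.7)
The plan is to deduce flatness of $\Omega^1_\F$ over $S$ from the local $\mathrm{Tor}$ criterion applied directly to the defining short exact sequence. By the preceding lemma, the torsion-freeness of $\Omega^1_\F$ forces $I(\F)$ to be a line bundle on $X$; since $X\to S$ is smooth, $\Omega^1_{X|S}$ is locally free. Both $I(\F)$ and $\Omega^1_{X|S}$ are therefore flat over $\O_S$, so for any $s\in S$ and any stalk $x\in X_s$ the long exact sequence of $\mathrm{Tor}^{\O_{S,s}}_*(-,k(s))$ applied to the family collapses to
\[
0 \to \mathrm{Tor}_1^{\O_{S,s}}\bigl((\Omega^1_\F)_x, k(s)\bigr) \to I(\F)|_{X_s,x} \xrightarrow{\iota_s} \Omega^1_{X_s/k(s),x}.
\]
Thus $\Omega^1_\F$ is $\O_S$-flat (stalk by stalk, by the local criterion) exactly when the fibrewise map $\iota_s$ is injective for every $s\in S$.

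To obtain that injectivity, I would feed the nontriviality hypothesis with $T=\mathrm{Spec}\,k(s)\to S$: it says $\iota_s$ is not the zero morphism on the fibre $X_s$. Since $I(\F)|_{X_s}$ is a line bundle on the smooth fibre and $\Omega^1_{X_s/k(s)}$ is locally free, a nonzero map from a line bundle into a torsion-free sheaf has kernel of generic rank strictly less than one, and being torsion-free this kernel vanishes. Arguing componentwise — feeding appropriate test schemes $T\to S$ against each irreducible component of $X_s$ — then gives global injectivity of $\iota_s$, and the $\mathrm{Tor}$ computation above yields $\mathrm{Tor}_1^{\O_{S,s}}((\Omega^1_\F)_x,k(s))=0$ at every $(s,x)$, proving the flatness claim.

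The main obstacle I foresee is the delicate interpretation of ``nowhere trivial'' when $X_s$ fails to be connected: one needs $\iota_s$ to be nonzero on each connected component of each fibre rather than merely as a global morphism. In the setting inherited from the preceding lemma (with $X$ integral and, in the typical case, connected fibres) this is automatic and the argument reduces to the Tor computation; more generally one has to be careful to apply the hypothesis against test schemes that detect each component of the fibre, in order to rule out $\iota_s$ degenerating on a stratum.
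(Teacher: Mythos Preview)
Your argument is essentially the paper's: both invoke the preceding lemma to get that $I(\F)$ is a line bundle, then read off the vanishing of $\mathrm{Tor}_1^S(\Omega^1_\F,-)$ from the long exact sequence of the defining short exact sequence, using that a nonzero map from a line bundle into a locally free sheaf on an integral scheme is injective. The only difference is that the paper tests against arbitrary base changes $T\to S$ rather than just residue fields, and this is precisely how it handles the disconnected-fibre issue you flag: if the pulled-back inclusion had nontrivial kernel somewhere, one passes to a smaller $T$ (cut out by the annihilator of the image) over which the inclusion becomes identically zero, directly contradicting the nowhere-triviality hypothesis.
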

\begin{proof}[Proof.]
  Indeed, $\Omega^1_{\F}$ being torsion free implies that the rank-$1$ sheaf $I(\F)$ must be a line bundle. 
 Then if we take any morphism $f:T\to S$ and take pull-backs we'll have an exact sequence
\[0\to \mathrm{Tor}_1^S(\Omega^1_{\F}, T)\to f^*I(\F)\to f^*\Omega^1_{X|S}\to f^*\Omega^1\to 0.\]
Now, as $I(\F)$ is a line bundle, the cokernel $ f^*I(\F)/ \mathrm{Tor}_1^S(\Omega^1_{\F}, T)$ must be a torsion sheaf over $X_T$.
But, $X$ being smooth over $S$, the annihilator $f^*\Omega^1_{X|S}$ is of the form $p^*(J)$, with $J\subset\O_T$, so $ f^*I(\F)\to f^*\Omega^1_{X|S}$ must be the zero morphism when restricted to $\O_T/J$, contradicting the nowhere triviality assumption.
\end{proof}

\begin{rmk}
In the codimension $1$ case, we can calculate the $\sing(\F)$ by noting that $\mathcal{E}xt^1_X(\Omega^1_{\F}, \O_X)$ is the cokernel in the exact sequence
\[T_SX\to I(\F)^\vee\to \mathcal{E}xt^1_X(\Omega^1_{\F}, \O_X)\to 0.\]
We can then tensor the sequence by $I(\F)$ and obtain 
\[ T_S X\otimes I(\F)\to \O_X\to  \mathcal{E}xt^1_X(\Omega^1_{\F}, \O_X)\otimes I(\F)\to 0.\]
Now, $I(\F)$ being a line bundle, the support of $ \mathcal{E}xt^1_X(\Omega^1_{\F}, \O_X)$ and that of $ \mathcal{E}xt^1_X(\Omega^1_{\F}, \O_X)\otimes I(\F)$ is exactly the same.
Note then that, in the second exact sequence, the cokernel is the scheme theoretic zero locus of the twisted $1$-form given by
\[ \O_X\xrightarrow{\omega} \Omega^1_{X|S}\otimes I(\F)^\vee \] 
as defined in \cref{supzero}.
So, if we have a family of codimension $1$ Pfaff systems given locally by a twisted form 
\[ \omega=\sum_{i=1}^n f_i(x) dx_i\]
then $\sing(\F)$ is the scheme defined by the ideal $(f_1, \dots, f_n)$.
\end{rmk}

The above proposition and remark tell us that our definition of flat family for Pfaff systems of codimension $1$ is essentially the same as the one used in the now classical works of Lins-Neto, Cerveau, et. al.

\begin{teo}\label{teo}
Assume we have two families
\begin{align}
 0\to I(\F)\to &\Omega^1_{X|S}\to  \Omega^1_{\F}\to 0 \label{sec1}\\
 0\to T\F\to &T_S X\to N_\F\to 0, \label{sec2}
\end{align}
satisfying the following conditions:
\begin{itemize}
\item The families \ref{sec1} and \ref{sec2} are dual to each other.
\item  $N_\F$ is torsion free (or, equivalently, $\Omega^1_{\F}$ is torsion free).
\item They are codimension $1$ families.
\item $\mathrm{sing}(\F)$ is flat over $S$.\newline
\end{itemize}
Then \ref{sec1} is \underline{flat} if and only if \ref{sec2} is flat. 

\end{teo}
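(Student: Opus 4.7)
The plan is to package both families into a single short exact sequence of $\O_X$-modules in which $\mathrm{sing}(\F)$ appears as the cokernel, and then transport flatness across it using the Tor long exact sequence. The identification to aim for is $N_\F\otimes I(\F)\cong\mathcal I_{\mathrm{sing}(\F)}$, which reduces flatness on the distribution side to the hypotheses that $\O_X$ and $\O_{\mathrm{sing}(\F)}$ are $S$-flat. On the Pfaff side, flatness will follow directly from \cref{propdual1}.

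First I would unpack the duality. By the lemma preceding the theorem $I(\F)$ is an invertible sheaf, and by \cref{reflex} one has $T\F=(\Omega^1_\F)^\vee$. Dualizing \eqref{sec1} gives the four-term exact sequence
\[
0\to T\F\to T_SX\to I(\F)^\vee\to \mathcal{E}xt^1_X(\Omega^1_\F,\O_X)\to 0,
\]
so $N_\F$ is the image of $T_SX\to I(\F)^\vee$ and fits in
\[
0\to N_\F\to I(\F)^\vee\to \mathcal{E}xt^1_X(\Omega^1_\F,\O_X)\to 0.
\]
Tensoring with the invertible sheaf $I(\F)$ preserves exactness, and by the remark preceding the theorem one has $\mathcal{E}xt^1_X(\Omega^1_\F,\O_X)\otimes I(\F)\cong\O_{\mathrm{sing}(\F)}$, which yields the key sequence
\[
0\to N_\F\otimes I(\F)\to\O_X\to\O_{\mathrm{sing}(\F)}\to 0.
\]

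For the flatness deductions, smoothness of $X\to S$ makes $\O_X$ flat over $\O_S$, and by hypothesis $\O_{\mathrm{sing}(\F)}$ is flat over $\O_S$. The long exact sequence of $\mathrm{Tor}^{\O_S}(-,M)$ applied to the displayed sequence then forces $\mathrm{Tor}_1^{\O_S}(N_\F\otimes I(\F),M)=0$ for every $\O_S$-module $M$, so $N_\F\otimes I(\F)$ is $S$-flat, and hence $N_\F$ is $S$-flat since $I(\F)$ is invertible. Thus \eqref{sec2} is flat. For the converse direction, the torsion-freeness of $\Omega^1_\F$ together with the observation that the injection $I(\F)\hookrightarrow\Omega^1_{X|S}$ is nowhere trivial on $S$ (an injection of a line bundle into a locally free sheaf remains nonzero after any base change with nonempty domain) triggers \cref{propdual1}, giving that \eqref{sec1} is automatically flat. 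Both directions of the iff are thereby established.

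The only non-routine step is the scheme-theoretic identification $N_\F\otimes I(\F)\cong\mathcal I_{\mathrm{sing}(\F)}$. The coincidence of the underlying topological supports was already established earlier in \cref{duality}; the upgrade to an identification of scheme structures comes from the explicit local description in the remark above, where $\mathrm{sing}(\F)$ is cut out by the coefficients $(f_1,\dots,f_n)$ of a local generator $\omega=\sum_i f_i\,dx_i$ of $I(\F)$, and the contraction map $T_SX\otimes I(\F)\to\O_X$ has image precisely the ideal $(f_1,\dots,f_n)$. Once this is in place the rest is a formal Tor manipulation, so I expect the identification step to be the conceptual heart of the argument while the flatness conclusions are a standard homological consequence.
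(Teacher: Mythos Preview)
Your argument for the flatness of \eqref{sec2} is exactly the paper's: the paper writes $0\to N_\F\to \mathcal L\to \mathcal L|_\Sigma\to 0$ with $\mathcal L=I(\F)^\vee$ and $\Sigma=\mathrm{sing}(\F)$, which is your key sequence twisted by $I(\F)^\vee$, and then concludes by the same two-out-of-three flatness step. The identification $\mathcal{E}xt^1_X(\Omega^1_\F,\O_X)\otimes I(\F)\cong\O_{\mathrm{sing}(\F)}$ is likewise the paper's.

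The one soft spot is your parenthetical in the converse direction. The assertion ``an injection of a line bundle into a locally free sheaf remains nonzero after any base change'' is false even with torsion-free cokernel: over $S=\mathbb A^2_{s,t}$ and $X=\mathbb A^1_S$, the map $\O_X\xrightarrow{(s,t)}\O_X^{\oplus 2}$ has torsion-free cokernel (since $s,t$ is a regular sequence in $\O_X$) yet vanishes over the origin of $S$. What actually rescues nowhere-triviality here is the hypothesis that $\mathrm{sing}(\F)$ is flat over $S$, which prevents it from containing an entire fibre $X_s$, so the locally free locus $U=X\setminus\mathrm{sing}(\F)$ meets every fibre and the locally split injection $I(\F)|_U\hookrightarrow\Omega^1_{X|S}|_U$ survives base change. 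The paper argues this point via duality instead: on $U$ one has base change for $\mathcal{H}om$, so $(\iota_T)^\vee\cong(\iota^\vee)_T$, and $\iota^\vee$ is the surjection $T_SX\to N_\F$, which stays surjective onto a nonzero sheaf. Either fix works; just replace the parenthetical with one of them.
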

\begin{proof}[Proof.]
Let $\Sigma=\mathrm{sing}(\F)$.
\par Let's suppose first that the family
\[ 
 0\to I(\F)\to \Omega^1_{X|S}\to  \Omega^1_{\F}\to 0
\]
is flat. We have to prove that $N_\F$ is also flat.%
To do this we note that applying the functor $\mathcal{H}om_X(-,\O_X)$ to the family of distributions not only gives us the family of Pfaff systems but also the exact sequence
\[
 0 \to N_\F\to I(\F)^\vee\to \mathcal{E}xt^1_X(\Omega^1_{\F}, \O_X)\to 0.
\]
Being $\Omega^1_{\F}$ torsion-free, $I(\F)$ must be a line bundle, and so must $I(\F)^\vee$, let's call $I(\F)^\vee=\mathcal{L}$ to ease the notation. 
Now $\mathcal{L}$ have $N_\F$ as a sub-sheaf generically of rank $1$, so $N_\F=\mathcal{I}\cdot\mathcal{L}$ for some Ideal sheaf $\mathcal{I}$. 
Then $ \mathcal{E}xt^1_X(\Omega^1_{\F}, \O_X)\cong \mathcal{L}\otimes \O_X/\mathcal{I}$. 
As $\Sigma=\mathrm{supp}( \mathcal{E}xt^1_X(\Omega^1_{\F}, \O_X))$ one necessarily have $\mathcal{E}xt^1_X(\Omega^1_{\F}, \O_X)\cong \mathcal{L}_\Sigma$. Then $\mathcal{L}_\Sigma$, being a locally free sheaf over $\Sigma$ wich is flat over $S$, is itself flat over $S$.
Therefore, as $\mathcal{L}$ is also flat over $S$, flatness for $N_\F$ follows.
\par Let's suppose now that the family 
\[
0\to T\F\to T_S X\to N_\F\to 0
\]
is flat. We have to prove that $\Omega^1_{\F}$ is also flat.
 By the above proposition it's enough to show that the morphism $I(\F)\xrightarrow{\iota} \Omega^1_{X|S}$ is nowhere zero.
 Suppose there is $T\to S$ such that $\iota_T=0$.
 Take an open set $U\subset X$ where $\Omega^1_{\F}$ is locally free. 
In that open set we can apply base change with respect to the functor   $\mathcal{H}om_{X}(-,\O_X)$ (\cite{AK} or \cite{BO}) so, restricting everything to $U$ we have $(\iota_T)^\vee\cong (\iota^\vee)_T$.
 But, in $U$, $\iota^\vee$ is the morphism $T_SX\to N_\F$ and so it cannot become the zero morphism under any base change.
\end{proof}

\subsection{The arbitrary codimension case}\label{codimk}

To give an  analogous theorem to \ref{teo} in arbitrary codimension we'll have to deal with finer invariants than the singular locus of the foliation. 
In the scheme $X$ we'll consider a stratification naturally associated with ${\F}$.
This stratification have been already studied and described by Suwa in \cite{Suwa}.
To deal with flatness issues we have to provide a scheme structure to Suwa's stratification, this will be a particular case of flattening stratification.
Before going into that, we begin with some generalities.
Remember that we are working over a smooth morphism $X\to S$.

\begin{lema}
Let $X\to S$ be a smooth morphism, $\mathscr{F}$ a coherent sheaf on $X$ that is relatively $Z^{(2)}$-closed over $S$.
Then, for any $s\in S$, the sheaf $\mathscr{F}_s=\mathscr{F}\otimes k(s)$ is $Z^{(2)}$-closed over $X_s$.
\end{lema}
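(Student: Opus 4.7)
The plan is to verify the conclusion on stalks, using the hypothesis through a local extension of $Y_s$ from $X_s$ to $X$. Fix $x\in Y_s$; a germ in $((j_s)_*(\mathscr{F}_s|_{U_s}))_x$ is represented by a section $\sigma\in \Gamma(W\setminus Y_s,\mathscr{F}_s)$ for some open neighborhood $W\subset X_s$ of $x$, and the task is to extend $\sigma$ to a section of $\mathscr{F}_s$ on a neighborhood of $x$ in $X_s$.

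Using smoothness of $X\to S$, choose an open $V\subset X$ with $V\cap X_s=W$ and lift defining equations of $Y_s\cap W$ in $X_s$ to equations in $V$, producing a closed subscheme $Y\subset V$ with $Y\cap X_s=Y_s\cap W$. By upper semi-continuity of fiber dimension, after shrinking $V$ we may assume $d_S(Y)\geq 2$. The hypothesis then gives (locally at $x$) that every germ at $x$ of a section of $\mathscr{F}$ on a punctured neighborhood of $x$ in $V$ missing $Y$ extends to a germ of section of $\mathscr{F}$ at $x$, whose reduction modulo $\mathfrak{m}_s\mathcal{O}_{X,x}$ is a germ of section of $\mathscr{F}_s$ at $x$.

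The main obstacle is the intermediate step of lifting $\sigma$ itself to a germ of section of $\mathscr{F}$ on some punctured neighborhood of $x$ in $V$ whose reduction modulo $\mathfrak{m}_s$ is $\sigma$. The obstruction to this lifting lies in $H^1$ of a punctured neighborhood of $x$ with coefficients in $\ker(\mathscr{F}\to\mathscr{F}_s)$; smoothness of $X\to S$ ensures this kernel is locally of the form $(t_1,\ldots,t_r)\cdot\mathscr{F}$ with $t_1,\ldots,t_r$ a regular sequence in $\mathcal{O}_{X,x}$ generating $\mathfrak{m}_s\mathcal{O}_{X,x}$. I would expect to kill the obstruction by an inductive Koszul argument on this regular sequence, introducing the $t_i$ one at a time and using the hypothesis with $Y$ (whose fiber-codimension condition is preserved because each $t_i$ lies in the ideal defining $Y_s$) to produce partial lifts at each step. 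Combining these lifts with the extension property from the previous paragraph yields the required extension of $\sigma$ in $X_s$.
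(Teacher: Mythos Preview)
Your overall strategy---extend $Y_s$ to a relative codimension-$2$ closed set $Y$, lift $\sigma$ to a section of $\mathscr{F}$ on a punctured neighborhood, then invoke the hypothesis---is reasonable, and you correctly locate the obstruction to lifting in $H^1$ of the punctured neighborhood with values in $\mathfrak m_s\mathscr{F}$. The problem is the proposed Koszul induction that is supposed to kill this obstruction.

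First, a confusion: the $t_i$ generate $\mathfrak m_s\O_{X,x}$, i.e.\ they cut out the fiber $X_s$ inside $X$; restricted to $X_s$ they are identically zero, so they are certainly \emph{not} in the ideal of $Y_s$ in $\O_{X_s}$. Your parenthetical justification for why ``the fiber-codimension condition is preserved'' therefore does not parse. Second, and more seriously, $t_1,\dots,t_r$ form a regular sequence on $\O_{X,x}$ (this is what smoothness gives), but nothing in the hypotheses forces them to be regular on $\mathscr{F}_x$: $\mathscr{F}$ is merely coherent, with no flatness over $S$ assumed. So the short exact sequences you would need for the Koszul filtration on $\mathscr{F}$ need not exist. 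Third, even granting regularity, the inductive step asks you to lift through $\mathscr{F}/(t_1,\dots,t_i)\mathscr{F}$, and at that point you would need a $Z^{(2)}$-closedness statement for these intermediate quotients to control the new $H^1$ obstructions---which is essentially the lemma you are trying to prove. The hypothesis is only about $\mathscr{F}$ itself and gives no direct handle on $H^1$ of punctured neighborhoods or on the quotients.

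The paper avoids this lifting problem altogether by passing to the formal completion at $x$. Smoothness then gives a genuine product decomposition $\widehat{\O_{X,x}}\cong \widehat{\O_{S,s}}[[z_1,\dots,z_d]]$, so one may take the thickening of $U_s$ to be literally $V=U\times S$ in the formal model. In that product situation the surjection $\widehat{\mathscr{F}}|_V\to \widehat{\mathscr{F}_s}|_U$ is obtained directly (the map on sections over $V$ is just reduction modulo $\mathfrak m_s$), and a two-line diagram chase using the hypothesis on $\mathscr{F}$ finishes the argument. The formal completion is what buys you the product structure and lets you bypass the $H^1$ obstruction entirely; working only with Zariski opens, as you do, one does not have such a splitting and is forced into the lifting problem you were unable to close.
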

\begin{proof}[Proof.] 
We have to show that for every $U\subset X_s$ such that $\mathrm{codim}(X\setminus U)\geq 2$ the restriction 
\[\mathscr{F}_s\xrightarrow{\rho_U} \mathscr{F}_s|_U\]
is surjective.
As the formal completion $ \widehat{\O}_{X_s, x}$ of $\O_{X_s}$ with respect to any closed point $x$ is faithfully flat over $\O_{X_s}$ \cite{SGA1}*{IV.3.2},  we can check surjectivity of $\rho_U$ by looking at every formal completion.
As $X\to S$ is smooth, formally around a point $x$ we have $\O_X\cong \O_S\otimes_k k[z_1,\dots, z_d]$ so we can take an open subset $V\subseteq X$, to be $V=U\times S$. 
Then, with this choice of $V$, we have an epimorphism
\[  \^{\mathscr{F}}|_V\to \^{\mathscr{F}}_s|_U\to 0. \]
Then we have  a diagram with exact rows and columns 
\[
\xymatrix{ 
&\^{\mathscr{F}} \ar[r] \ar^{\rho_V}[d] &\^{\mathscr{F}_s} \ar^{\rho_U}[d] \ar[r] &0 \\
&\^{\mathscr{F}}|_V \ar[r] \ar[d] &\^{\mathscr{F}_s}|_U \ar[r] &0\\
&0
}.
\]
So $\rho_U$ must be an epimorphism as well.

\end{proof}

\begin{lema}\label{excdist}
Let $p:X\to S$ a smooth morphism.
Let be a family of distributions
\[0\to T\F\to T_S X\to N_\F\to 0.\]
If the codimension of  $\mathrm{sing}(\F)$ with respect to $X_{p(\mathrm{sing}(\F))}$ is greater than $2$ then, for every map $T\to S$, one have
\[\mathcal{H}om_X(T\F,\O_X)\otimes \O_T \cong \mathcal{H}om_{X_T}(T\F_T,\O_T) .\]
The analogous statement is true for $I(\F)^\vee$ in a flat family of Pfaff systems.
\end{lema}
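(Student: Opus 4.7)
The plan is to verify the base-change isomorphism on the dense open locus where $T\F$ is locally free and then propagate it to all of $X_T$ via the $Z^{(2)}$-purity machinery of \cref{refS2}. Set $\Sigma := \sing(\F)$, $U := X\setminus \Sigma$, write $\pi\colon X_T\to X$ for the base-change projection and $j_T\colon U_T := \pi^{-1}(U) \hookrightarrow X_T$ for the inclusion. Since $\Sigma = \mathrm{supp}(\mathcal{E}xt^1_X(N_\F,\O_X))$, the quotient $N_\F$ is locally free on $U$, hence so is $T\F|_U$. On $U_T$ the canonical base-change map
\[
\alpha\colon \mathcal{H}om_X(T\F,\O_X)\otimes \O_T \;\longrightarrow\; \mathcal{H}om_{X_T}(T\F_T,\O_{X_T})
\]
is therefore an isomorphism, since $\mathcal{H}om$ commutes with arbitrary base change for locally free sheaves.

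Fiberwise codimension is preserved under base change: the fibers of $X_T\to T$ coincide, up to residue field extension, with those of $X\to S$, so $\Sigma_T := \Sigma\times_S T$ has codimension $\geq 2$ in every fiber of $X_T\to T$. Smoothness is also preserved, so $X_T\to T$ is smooth with regular, and thus normal integral, fibers. Then \cref{refS2} applies on $X_T$: every reflexive coherent sheaf there is relatively $S_2$ over $T$, hence $Z^{(2)}$-pure, and so coincides with $(j_T)_*$ of its restriction to $U_T$. The target of $\alpha$, being the dual of a coherent sheaf, is reflexive and therefore satisfies this.

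The main obstacle is to establish the same purity for the source $\mathcal{H}om_X(T\F,\O_X)\otimes \O_T$, because $\pi$ need not be flat and so pullback does not automatically preserve reflexivity. I would address this by dualizing $0\to T\F\to T_SX\to N_\F\to 0$ to obtain
\[
0\to N_\F^\vee \to \Omega^1_{X|S} \to \mathcal{H}om_X(T\F,\O_X) \to \mathcal{E}xt^1_X(N_\F,\O_X) \to 0,
\]
whose rightmost term is supported on $\Sigma$. Applying $\pi^*$, the resulting complex on $X_T$ fails exactness (and introduces $\mathrm{Tor}$-corrections) only on $\Sigma_T$, so the natural map from $\mathcal{H}om_X(T\F,\O_X)\otimes \O_T$ to $(j_T)_*$ of its restriction to $U_T$ has kernel and cokernel supported on $\Sigma_T$. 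The fiberwise codim $\geq 2$ hypothesis, combined with a standard local-cohomology vanishing on the smooth $X_T\to T$ (the relevant groups $\mathcal{H}^i_{\Sigma_T}$ for $i=0,1$ vanish in degrees below the codimension), then forces both to vanish, giving $Z^{(2)}$-purity of the source.

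Once both sides of $\alpha$ are $Z^{(2)}$-pure and agree on $U_T$, they are canonically isomorphic on $X_T$, proving the lemma. The Pfaff-system analogue proceeds mutatis mutandis starting from $0\to I(\F)\to\Omega^1_{X|S}\to\Omega^1_\F\to 0$: flatness of the family ensures that $I(\F)$ is flat over $S$ on $U$ and, through the dual short exact sequence, the same reflexivity/$Z^{(2)}$-purity argument applies to $I(\F)^\vee$.
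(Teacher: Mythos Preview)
Your overall strategy---establish the base-change isomorphism on $U_T$ and then extend via $Z^{(2)}$-type properties---is the right shape, and it is also what the paper does. The gap is in your treatment of the source $\pi^*(T\F^\vee)$. You correctly note that the kernel and cokernel of the comparison map $\pi^*(T\F^\vee)\to (j_T)_*\bigl(\pi^*(T\F^\vee)|_{U_T}\bigr)$ are supported on $\Sigma_T$, but the inference ``supported on a fiberwise codimension $\geq 2$ locus, hence zero'' is not a valid general principle. The vanishing $\mathcal{H}^i_{\Sigma_T}(\mathscr{F})=0$ for $i=0,1$ is equivalent to $\mathrm{depth}_{\Sigma_T}\mathscr{F}\geq 2$, which is a condition on the \emph{sheaf}, not on the codimension of $\Sigma_T$: for instance on $\mathbb{A}^2$ with $\Sigma=\{0\}$ the ideal $\mathfrak{m}=(x,y)$ has $j_*(\mathfrak{m}|_U)=\O$, with nonzero cokernel supported at the origin. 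Nothing in your argument controls the depth of $\pi^*(T\F^\vee)$ at points of $\Sigma_T$; since $\pi$ is not assumed flat, pullback does not preserve reflexivity or relative $S_2$, and the pulled-back four-term sequence provides no information precisely at the points where you need it.

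The paper avoids this difficulty by a different reduction. Rather than treating arbitrary $T$ directly, it invokes the exchange theorem of Altman--Kleiman \cite{AK}*{Theorem 1.9}: to obtain the base-change isomorphism for all $T\to S$ it suffices to check that the natural map is \emph{surjective} for $T=\mathrm{Spec}\,k(s)$ at every closed point $s\in S$. Over a closed fiber the preceding lemma of the paper (restriction to a fiber of a relatively $Z^{(2)}$-closed sheaf is $Z^{(2)}$-closed) applies to the reflexive sheaf $T\F^\vee$ and gives $Z^{(2)}$-\emph{closedness} of the source on $X_s$; together with reflexivity (hence $Z^{(2)}$-purity) of the target on $X_s$ and the isomorphism over $U_s$, surjectivity extends to all of $X_s$. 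The essential move you are missing is this reduction to fibers: it lets one work with $Z^{(2)}$-closedness of the source, which the preceding lemma actually supplies, instead of the stronger purity statement for arbitrary $T$ that your local-cohomology sketch does not establish.
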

\begin{proof}[Proof.]
By \cite{AK}*{Theorem 1.9}  we only have to prove that, for every closed point $s\in S$, the natural map
\[ \mathcal{H}om_X(T\F,\O_X)\otimes k(s) \to \mathcal{H}om_{X_s}(T\F\otimes k(s),\O_X \otimes k(s))\]
is surjective.
Being the dual of some sheaves, both $\mathcal{H}om_X(T\F,\O_X)$ and $\mathcal{H}om_{X_s}(T\F\otimes k(s),\O_X \otimes k(s))$ possess the relative property $S_2$ with respect to $p$ ( \cref{refS2}), and so are relatively $Z^{(2)}$-closed w.r.t. $p$,
and so is $\mathcal{H}om_X(T\F,\O_X)\otimes k(s)$ by the above lemma.
\par Let $U=X\setminus \sing{\F}$ and $j:U\hookrightarrow X$ the inclusion. 
 As $T\F|_U$ is locally free over $U$, so is $T\F^\vee|_U$. 
Then, in $U$, we have 
\[ \mathcal{E}xt^1(T\F|_U, \O_X|_U \otimes_S \mathcal{G})=0,\]
for every $\mathcal{G}\in Coh(S)$. 
Then from the exchange property for local Ext's \cite{AK}*{Theorem 1.9} we get surjectivity on
\[ \mathcal{H}om_X(T\F|_U,\O_X|_U)\otimes k(s) \to \mathcal{H}om_{X_s}(T\F|_U\otimes k(s),\O_X|_U \otimes k(s)).\]
But, as $\mathrm{codim}(\sing(\F))>1$ and both sheaves are $S_2$, then surjectivity holds in all of $X_s$.
\end{proof}

\begin{lema}
Given a flat family 
\[0\to T\F\to T_S X\to N_\F\to 0.\]
Such that the relative codimension $d_S(\mathrm{sing}(\F))$ of $\mathrm{sing}(\F)$ over $S$ verifies $d_S(\mathrm{sing}(\F))\geq 2$. 
Suppose further that the flattening stratification of $X$ over $T\F$ is flat over $S$ (c.f.: \cref{propstratfl}).
  Then $T\F^\vee$ is also a flat $\O_S$-module.
\par The analogous statement is true for $I(\F)^\vee$ in a flat family of Pfaff systems.
\end{lema}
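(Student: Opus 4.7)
The plan is to invoke Proposition \ref{propstratfl} applied to the sheaf $\mathscr{F} = T\F^\vee$ on $X$, taking the projective morphism to be the identity $\mathrm{id}_X : X \to X$, the target map to be the structure morphism $f : X \to S$, and the stratification $\coprod_i X_i \hookrightarrow X$ to be the flattening stratification of $T\F$, whose strata are the locally closed subschemes on which $T\F$ becomes locally free of a fixed rank. The hypothesis of the lemma provides exactly the flatness of $\coprod_i X_i \hookrightarrow X \xrightarrow{f} S$, which is the second condition required by \ref{propstratfl}. Hence the entire question reduces to verifying the first condition: that $T\F^\vee \otimes_{\O_X} \O_{X_i}$ is flat over $X_i$ for every stratum.

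I plan to establish this flatness via the base-change isomorphism
\[
T\F^\vee \otimes_{\O_X} \O_{X_i} \;\xrightarrow{\sim}\; \mathcal{H}om_{X_i}\bigl(T\F|_{X_i}, \O_{X_i}\bigr),
\]
whose right-hand side is locally free on $X_i$ because $T\F|_{X_i}$ is, by definition of the flattening stratification; local freeness implies flatness, and Proposition \ref{propstratfl} then delivers the conclusion that $T\F^\vee$ is flat over $S$. The Pfaff-system version for $I(\F)^\vee$ is obtained by the same reasoning: $I(\F)$ is itself flat over $S$ as the kernel of a surjection between flat sheaves, and its flattening stratification on $X$ inherits flatness over $S$ from the hypothesis, so the identical argument applies to the dual.

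The main obstacle is justifying the base-change isomorphism above, since Lemma \ref{excdist} only provides the analogous statement along morphisms $T \to S$ and not along the closed immersions $X_i \hookrightarrow X$. I plan to adapt the proof of \ref{excdist}: $T\F^\vee$ is reflexive (being a dual) and hence enjoys the relative $S_2$ property with respect to $X \to S$ by Proposition \ref{refS2}; combined with the assumption $d_S(\sing(\F)) \geq 2$, this reduces the check to the open complement $U = X \setminus \sing(\F)$, where $T\F$ is locally free on $X$ and the base change is automatic. The hypothesis that each $X_i$ is flat over $S$ is essential here: it ensures that the $S_2$ and $Z^{(2)}$-closedness properties of $T\F^\vee$ are compatible with restriction to the strata, so that the isomorphism known on $U \cap X_i$ propagates to all of $X_i$ via the Hartogs-type extension that $Z^{(2)}$-closedness provides.
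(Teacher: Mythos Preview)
Your proposal is correct and follows essentially the same route as the paper: reduce to Proposition~\ref{propstratfl} with the flattening stratification of $T\F$, then show that $T\F^\vee\otimes\O_{X_P}$ is locally free on each stratum via the base-change identification $T\F^\vee\otimes\O_{X_P}\cong\mathcal{H}om_{X_P}(T\F|_{X_P},\O_{X_P})$. You are in fact slightly more scrupulous than the paper: the paper simply invokes Lemma~\ref{excdist} to obtain this isomorphism, whereas you correctly flag that \ref{excdist} is stated for base change along $T\to S$ and that its proof must be adapted to the closed immersions $X_P\hookrightarrow X$---but the adaptation you sketch (reflexivity of the dual, relative $S_2$, reduction to $U=X\setminus\sing(\F)$ where $T\F$ is locally free) is exactly the mechanism underlying \ref{excdist}, so the argument goes through.
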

\begin{proof}[Proof.]
The proof works exactly the same for distributions or Pfaff systems mutatis mutandi.
\par Take $\coprod_P X_P$ the flattening stratification of $X$ with respect to $T\F$.
The restriction $T\F_{X_P}$ (being coherent and flat over $X_P$) is locally free over $X_P$, then so is its dual $\mathcal{H}om_{X_P}(T\F_{X_P},\O_{X_P})$.
By \cref{excdist}, in each stratum $X_P$ we have the isomorphism
\[ \mathcal{H}om_{X_P}(T\F_{X_P},\O_{X_P}) \cong \mathcal{H}om_X(T\F,\O_X)\otimes \O_{X_P}=T\F^\vee \otimes \O_{X_P}.\]
So $T\F^\vee$ is flat when restricted to the filtration $\coprod_P X_P$, which is in turn flat over $S$.
Then, by \cite{FGex}*{Section 5.4.2}, $T\F^\vee$ is flat over $S$.
%
\end{proof}

\begin{defs}
For a family of distributions
consider the flattening stratification 
\[\coprod_{P(\F)} X_{P(\F)} \subseteq X\]
 of $X$ with respect to $T\F\oplus N_\F$.
We call this the \emph{rank stratification} of $X$ with respect to $T\F$.
\end{defs}

\begin{rmk}
Note that the flattening stratification of  $T\F\oplus N_\F$ is the (scheme theoretic) intersection of the flattening stratification of $T\F$ with that of $N_\F$. This is because $(T\F\oplus N_\F)\otimes \O_Y$ is flat if and only if both $T\F\otimes \O_Y$ and $  N_\F\otimes \O_Y$ are. 
\par  This tells us, in particular, that each stratum  is indexed by two natural numbers $r$ and $k$ such that
\[ x\in X_{r,k} \Longleftrightarrow \dim(T\F\otimes k(x))=r\ \text{and}\ \dim(N_\F\otimes k(x))=k. \]
\par In \cite{Suwa}, Suwa studied a related stratification associated to a foliation. 
Given a distribution  $D\subset TM$ on a complex manifold $M$, he defines the strata $M^{(l)}$ as
\[   M^{(l)}=\{ x\in M\ s.t.\!: D_x \subset T_xM\ \text{is a sub-space of dimension $l$ }\}.\]
Here $D$ is spanned point-wise by vector fields $v_1,\dots,v_r$, and $D_x=<v_i(x)>$.
Clearly if $D$ is of generic rank $r$ the open stratum is $M^{(r)}$. 
\par Note that, in the setting of distribution as sub-sheafs $i: T\F \hookrightarrow TX$ of the tangent sheaf of a variety, the vector space $T_x\F$ is actually the image of the map 
\[T\F \otimes k(x) \xrightarrow{i\otimes k(x)} TX\otimes k(x),\]
whose kernel is $\mathrm{Tor}^X_1(N_\F, k(x))$.
Moreover we have the exact sequence
\[ 0\to T_x\F=\mathrm{Im}(i\otimes k(x))\to TX\otimes k(x) \to N_\F\otimes k(x)\to 0.\]
In particular, in a variety $X$ of dimension $n$, if $\dim( T_x\F)=l$ then $\dim(N_\F\otimes k(x))=n-l$. 
So what we call rank stratification of $X$ is actually a refinement of the stratification studied in \cite{Suwa}.
\end{rmk}

Our main motivation for defining this refinement of the stratification of \cite{Suwa} is the following result.

\begin{teo}\label{propdual2}
Assume we have dual families
\begin{align}
0\to T\F\to T_S X\to N_\F\to 0, \label{sec3}\\
0\to I(\F)\to \Omega^1_{X|S}\to \Omega^1_\F\to 0, \label{sec4}
\end{align}
parametrized by a scheme $S$ of finite type over an algebraically closed field, such that
\begin{itemize}
\item $N_\F$ is torsion free 
\item The relative codimension of $\mathrm{sing}(\F)$ over $S$ (that is $d_S(\mathrm{sing}(\F)$)) verifies $d_S(\mathrm{sing}(\F)\geq 2$. 
\item Each stratum $X_{r,k} $ of the rank stratification is flat over $S$.
\end{itemize}•
Then \ref{sec3} is flat over $S$ if and only if \ref{sec4} is.
Moreover,  for each point $s\in S$ we have
\[I(\F)_s = (N_{\F s})^\vee,\]
in other terms ``the dual family is the family of the duals".
\end{teo}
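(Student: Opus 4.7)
The identity $I(\F)_s = (N_{\F s})^\vee$ is immediate from base change: since $I(\F) = N_\F^\vee$ comes from dualizing the distribution sequence, the Pfaff-side analog of \cref{excdist} (valid under $d_S(\sing(\F))\geq 2$) applied with $T = \spec(k(s))\to S$ yields $I(\F)_s = \mathcal{H}om_X(N_\F,\O_X)\otimes k(s) \cong \mathcal{H}om_{X_s}(N_{\F s},\O_{X_s}) = (N_{\F s})^\vee$.

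For the flatness equivalence, the three hypotheses are invariant under the duality exchanging the two families (via \cref{reflex}), so the two implications are symmetric and it suffices to prove that flatness of $N_\F$ implies flatness of $\Omega^1_\F$. My plan is to invoke \cref{propstratfl} with $p = \mathrm{id}_X$, $f : X \to S$, and the rank stratification $\coprod_{r,k} X_{r,k} \subseteq X$, whose strata are flat over $S$ by hypothesis. It then suffices to show that $\Omega^1_\F|_{X_{r,k}}$ is locally free on $X_{r,k}$ for every $(r,k)$. On such a stratum, both $T\F$ and $N_\F$ restrict to locally free sheaves by the defining property of the flattening stratification of $T\F\oplus N_\F$; dualizing the restricted distribution sequence on $X_{r,k}$ gives an exact sequence of locally free sheaves
\[
0 \to (N_\F|_{X_{r,k}})^\vee \to \Omega^1_{X|S}|_{X_{r,k}} \to (T\F|_{X_{r,k}})^\vee \to 0,
\]
which I then identify with the restriction of the global dual sequence, so that $\Omega^1_\F|_{X_{r,k}} \cong (T\F|_{X_{r,k}})^\vee$ is locally free and \cref{propstratfl} delivers the desired global flatness of $\Omega^1_\F$ over $S$.

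The hard part will be this last identification. While \cref{excdist} is formulated as base change along a morphism $T \to S$, what I need is its analog for the restriction along the closed immersion $X_{r,k}\hookrightarrow X$, namely
\[
\mathcal{H}om_X(T\F,\O_X)\otimes_X \O_{X_{r,k}} \cong \mathcal{H}om_{X_{r,k}}(T\F|_{X_{r,k}}, \O_{X_{r,k}}),
\]
and similarly for $N_\F$. I expect the same strategy as in \cref{excdist} to carry over: on the open locus $U = X\setminus \sing(\F)$, where $T\F$ and $N_\F$ are already locally free over $X$, such base change for $\mathcal{H}om$ is automatic, and the codimension hypothesis $d_S(\sing(\F))\geq 2$ combined with \cref{refS2} should permit extending the isomorphism across $\sing(\F)$ by checking that both sides are relatively $Z^{(2)}$-closed and coincide on $U$.
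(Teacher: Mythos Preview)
Your overall strategy matches the paper's: reduce flatness of $\Omega^1_\F$ over $S$ to a statement on each rank stratum via \cref{propstratfl}. The gap is in what you claim happens on a stratum $Y=X_{r,k}$. Restriction to $Y$ is tensoring over $\O_X$, and $N_\F$ is \emph{not} flat over $\O_X$ away from the open stratum, so the restricted distribution sequence is the four-term
\[
0\to \mathrm{Tor}^X_1(N_\F,\O_Y)\to T\F|_Y\to T_SX|_Y\to N_\F|_Y\to 0,
\]
not a short exact sequence. Consequently $(T_SX|_Y)^\vee\to (T\F|_Y)^\vee$ is not surjective, and your displayed sequence $0\to (N_\F|_Y)^\vee\to \Omega^1_{X|S}|_Y\to (T\F|_Y)^\vee\to 0$ is not exact on the right. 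A rank count already rules out your proposed identification: $(T\F|_Y)^\vee$ is locally free of rank $r$, while the cokernel of $(N_\F|_Y)^\vee\hookrightarrow \Omega^1_{X|S}|_Y$ (when locally free) has rank $n-k$, and $r\neq n-k$ on every non-open stratum.

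The paper does not try to prove $\Omega^1_\F|_Y$ is locally free directly. Instead it shows that $\mathcal{E}xt^1_X(N_\F,\O_X)|_Y$ is locally free, by splitting the four-term sequence above into two short exact sequences of locally free $\O_Y$-modules and identifying $\mathcal{E}xt^1_X(N_\F,\O_X)|_Y\cong \mathcal{H}om_Y\!\big(\mathrm{Tor}^X_1(N_\F,\O_Y),\O_Y\big)$. Then \cref{propstratfl} gives flatness of $\mathcal{E}xt^1_X(N_\F,\O_X)$ over $S$, and the exact sequence $0\to \Omega^1_\F\to T\F^\vee\to \mathcal{E}xt^1_X(N_\F,\O_X)\to 0$ together with flatness of $T\F^\vee$ (from the lemma preceding the theorem) yields flatness of $\Omega^1_\F$. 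The $\mathrm{Tor}$ term you are missing is exactly the object whose dual the paper computes.

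For the ``moreover'' clause: your approach is reasonable, but note that \cref{excdist} is stated for $T\F^\vee$ and $I(\F)^\vee$, not for $N_\F^\vee$; you should say explicitly that its proof goes through verbatim for $N_\F$ (it does, since $N_\F$ is locally free off $\sing(\F)$ and duals are relatively $S_2$). The paper instead deduces base change for $\mathcal{H}om_X(N_\F,\O_X)$ from flatness of $\mathcal{E}xt^1_X(N_\F,\O_X)$ via the Altman--Kleiman exchange theorem, which is why it places this conclusion after the main flatness argument rather than before.
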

\begin{proof}[Proof] 
We prove one of the implications, the proof of the other is identical.
\par
Considering the exact sequence
\[ 0\to \Omega^1_\F \to T\F^\vee \to \mathcal{E}xt^1_X(N_\F,\O_X)\to 0.\] 
Is clear that to prove flatness of the dual family it's enough to show $\mathcal{E}xt^1_X(N_\F,\O_X)$ is flat over $S$.
\par Also, by \cref{excdist}, we have for every $s\in S$ the diagram with exact rows and columns,
\[
\xymatrix{
&\mathcal{H}om(I(\F),\O_X)\otimes k(s) \ar[r] \ar[d] &\mathcal{H}om(I(\F)_s,\O_{X_s}) \ar[r] \ar[d] &0\\
&\mathcal{E}xt_X^1(\Omega^1_\F, \O_X)\otimes k(s) \ar[r] \ar[d] &\mathcal{E}xt^1_X(\Omega^1_{\F}\otimes k(s), \O_{X_s}) \ar[d]\\
&0 &0 
}.
\]
So $\mathcal{E}xt_X^1(\Omega^1_\F, \O_X)\otimes k(s)\to \mathcal{E}xt^1_X(\Omega^1_{\F}\otimes k(s), \O_{X_s})$ is surjective for every $s\in S$ so by \cite{AK}*{Theorem 1.9} the exchange property is valid for $\mathcal{E}xt_X^1(\Omega^1_\F, \O_X)$.
 If  moreover $\mathcal{E}xt^1_X(N_\F,\O_X)$ is flat over $S$, then, again by  \cite{AK}*{Theorem 1.9},
\[
I(\F)_s=\mathcal{H}om_X(N_\F, \O_X)\otimes k(s) \cong \mathcal{H}om_X(N_{\F s}, \O_{X_s})=(N_{\F  s})^\vee .
  \]
By \cref{propstratfl} is enough to show the restriction of  $\mathcal{E}xt^1_X(N_\F,\O_X)$ to every rank stratum is flat over $S$.
So let $Y\subseteq X$  be a rank stratum, if we can show that   $\mathcal{E}xt^1_X(N_\F,\O_X)\otimes \O_Y$ is locally free then we're set.
By hypothesis, one have the isomorphism $\mathcal{H}om_X(T\F,\O_X)\otimes \O_Y \cong \mathcal{H}om_Y(T\F_Y,\O_Y)$.
So we can express  $\mathcal{E}xt^1_X(N_\F,\O_X)\otimes \O_Y$ as the cokernel in the $\O_Y$-modules exact sequence
\[ \mathcal{H}om_Y(TX_Y,\O_Y) \to \mathcal{H}om_Y(T\F_Y,\O_Y) \to \mathcal{E}xt^1_X(N_\F,\O_X)_Y\to 0.\]
So, localizing in a point $y\in Y$, we can realize the local $\O_{Y,y}$-module $\mathcal{E}xt^1_X(N_\F,\O_X)_{Y,y}$ as the set of maps $T\F_{Y,y}\to \O_{Y,y}$ modulo the ones that factorizes as
\[ \xymatrix{
&T\F_{Y,y} \ar@{-->}[r] \ar[d] &\O_{Y,y}\\
&TX_{Y,y} \ar[ru] }.
\]
To study  $\mathcal{E}xt^1_X(N_\F,\O_X)_{Y,y}$ this way, note that we have the following exact sequence.
\[0\to\mathrm{Tor}^X_1(N_\F, \O_{Y,y})\to T\F_{Y,y}\to TX_{Y,y}\to (N_\F)_{Y,y}\to 0.\] 
Which we split into two short exact sequences,
\begin{eqnarray}
0\to\mathcal{K}\to &TX_{Y,y}&\to (N_\F)_{Y,y}\to 0 \quad \text{and} \label{ace1} \\
0\to\mathrm{Tor}^X_1(N_\F, \O_{Y,y})\to &T\F_{Y,y}&\to \mathcal{K}\to 0. \label{ace2}
\end{eqnarray}•
Now, as $Y$ is a rank stratum, then $\mathcal{Q_Y}$ and $T\F_Y$ are flat over $Y$, and coherent, so they are locally free.
As a consequence, short exact sequence (\ref{ace1}) splits, so $TX_{Y,y}\cong (N_\F)_{Y,y}\oplus \mathcal{K}$.
So 
\[\mathcal{H}om_Y(TX_Y,\O_Y)_y\cong {\mathcal{H}om_Y(\mathcal{K},\O_{Y,y})\oplus \mathcal{H}om_Y((N_\F)_Y,\O_Y)_y}\] and we get $\mathcal{E}xt^1_X(N_\F,\O_X)_{Y,y}$ as the cokernel in 
\begin{equation}\label{ace3}
 \mathcal{H}om_Y(\mathcal{K},\O_{Y,y}) \to \mathcal{H}om_Y(T\F_Y,\O_Y)_y \to \mathcal{E}xt^1_X(N_\F,\O_X)_{Y,y}\to 0.
\end{equation}•

Being $(N_\F)_Y$ and $TX_Y$ locally free over $Y$, so is $\mathcal{K}$.
Then short exact sequence (\ref{ace2}) splits, so $T\F_{Y,y} \cong \mathrm{Tor}^X_1(N_\F, \O_{Y,y}) \oplus \mathcal{K}$.
Also, as $T\F_Y$ and $\mathcal{K}$ are locally free over $Y$, so is $\mathrm{Tor}^X_1(N_\F, \O_{Y})$ . 
 Sequence (\ref{ace3}) now reads
\[
 \mathcal{H}om_Y(\mathcal{K},\O_{Y,y}) \to \mathcal{H}om_Y(\mathrm{Tor}^X_1(N_\F, \O_{Y,y}),\O_Y)_y \oplus \mathcal{H}om_Y( \mathcal{K}, \O_{Y,y}) \to \mathcal{E}xt^1_X(N_\F,\O_X)_{Y,y}\to 0.
\]
So we have 
\[ \mathcal{H}om_Y(\mathrm{Tor}^X_1(N_\F, \O_{Y,y}),\O_Y)_y \cong \mathcal{E}xt^1_X(N_\F,\O_X)_{Y,y}.\]
Now, as $\mathrm{Tor}^X_1(N_\F, \O_{Y,y})$ is locally free over $Y$, so is its dual. 
In other words, we just proved $\mathcal{E}xt^1_X(N_\F,\O_X)_{Y}$ is locally free over $Y$, which settles the theorem. \end{proof}

Now by generic flatness we can conclude the following.

\begin{cor}\label{corobir}
Every irreducible component of the scheme $\mathrm{Inv}_P$ is birationally equivalent to an irreducible component of $\mathrm{iPf}_P$.
\end{cor}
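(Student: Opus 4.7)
The plan is to combine Grothendieck's generic flatness theorem with \cref{propdual2} (and with \cref{teo} in codimension~$1$) to produce a rational map $\mathrm{Inv}^P \dashrightarrow \mathrm{iPf}^P$ that restricts to an isomorphism between dense open subsets of corresponding irreducible components. Fix an irreducible component $C \subseteq \mathrm{Inv}^P$ and consider the restriction to $C$ of the universal family
\[ 0 \to T\F \to T_C(X \times C) \to N_\F \to 0. \]

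First I would apply generic flatness to the finitely many sheaves appearing in the hypotheses of \cref{propdual2}: namely $\mathcal{E}xt^1_X(N_\F, \O_{X\times C})$, the torsion subsheaf of $N_\F$, and the structure sheaves of the (finitely many) nonempty strata $X_{r,k}$ of the rank stratification. This yields a dense open $U \subseteq C$ over which each $X_{r,k}|_U$ is flat over $U$, $N_\F$ is fibrewise torsion-free, and, by upper semi-continuity of fibre dimension, the relative codimension satisfies $d_U(\mathrm{sing}(\F)|_U)\geq 2$. In the codimension-$1$ case this last condition may fail, and it is instead replaced by generic flatness of $\mathrm{sing}(\F)$ (again via flattening stratification), after which \cref{teo} is invoked in place of \cref{propdual2}.

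With these conditions over $U$, \cref{propdual2} ensures that the dual family
\[ 0 \to I(\F) \to \Omega^1_{X \times U|U} \to \Omega^1_\F \to 0 \]
is flat, which by the universal property of \cref{proprep2} defines a morphism $\phi : U \to \mathrm{iPf}^P$. Since $U$ is irreducible, its image lies in a single irreducible component $C' \subseteq \mathrm{iPf}^P$. Running the symmetric argument with $\mathrm{Inv}^P$ and $\mathrm{iPf}^P$ exchanged on $C'$ yields a dense open $V \subseteq C'$ and a morphism $\psi : V \to \mathrm{Inv}^P$. The pointwise identity $I(\F)_s = (N_{\F s})^\vee$ supplied by \cref{propdual2}, combined with the biduality statement of \cref{reflex} applied to the reflexive sheaves $T\F$ and $I(\F)$, shows that $\psi \circ \phi$ and $\phi \circ \psi$ restrict to the identity on sufficiently small dense opens; hence $\phi$ induces a birational equivalence between $C$ and $C'$.

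The main obstacle is guaranteeing the relative-codimension condition $d_U(\mathrm{sing}(\F)|_U) \geq 2$ generically on each component $C$, since this is not automatic for every family and the dichotomy with the codimension-$1$ situation (where one falls back on the weaker flatness hypothesis of \cref{teo}) is where the argument requires most care. Once the generic choice of $U$ is made, the rest is a formal bookkeeping of the universal properties underlying the Quot-scheme constructions of \cref{proprep1,proprep2}.
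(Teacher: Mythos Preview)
Your approach matches the paper's, whose entire proof is the clause ``by generic flatness'' placed immediately before the corollary; you have simply unpacked what that clause means---restrict the universal family to a component, use generic flatness to arrange the hypotheses of \cref{propdual2}, obtain the classifying morphism to $\mathrm{iPf}^P$, and invoke biduality via \cref{reflex} for the inverse.

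One minor imprecision worth flagging: applying generic flatness to the torsion subsheaf $\tau(N_\F)$ does not by itself yield fibrewise torsion-freeness, since flatness of $\tau(N_\F)$ over $U$ does not force it to vanish. The paper does not address this point either. Once fibrewise torsion-freeness is in hand, however, the ``main obstacle'' you identify evaporates: a torsion-free coherent sheaf on a regular fibre $X_s$ is automatically locally free in codimension~$1$ (the local ring at a codimension-$1$ point being a DVR), so $d_U(\mathrm{sing}(\F))\geq 2$ follows without further work and no separate codimension-$1$ fallback through \cref{teo} is needed.
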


\begin{rmk}
During the proof of \cref{propdual2} we have actually obtained this result:
\begin{prop}
$\mathcal{E}xt^1_X(N_\F,\O_X)$ is flat over the rank stratification.
\end{prop}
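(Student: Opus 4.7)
The plan is to observe that this proposition is essentially extracted from the middle of the proof of Theorem~\ref{propdual2}: everything needed is already there, one simply has to strip away the ambient flatness-over-$S$ discussion and keep the stratum-by-stratum computation.

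First I would fix a rank stratum $Y=X_{r,k}$. By the very definition of the rank stratification as the flattening stratification of $T\F\oplus N_\F$, both $T\F_Y:=T\F\otimes\O_Y$ and $(N_\F)_Y$ are coherent and flat over $Y$, hence locally free. From the four-term exact sequence
\[ 0\to \mathrm{Tor}^X_1(N_\F,\O_Y)\to T\F_Y\to TX_Y\to (N_\F)_Y\to 0\]
I would split off the kernel $\mathcal{K}$ of $TX_Y\to (N_\F)_Y$. Since $(N_\F)_Y$ is locally free, the short exact sequence $0\to\mathcal{K}\to TX_Y\to (N_\F)_Y\to 0$ splits, so $\mathcal{K}$ is locally free as a direct summand of $TX_Y$. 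Then the second short exact sequence $0\to \mathrm{Tor}^X_1(N_\F,\O_Y)\to T\F_Y\to\mathcal{K}\to 0$ splits as well, and $\mathrm{Tor}^X_1(N_\F,\O_Y)$ appears as a direct summand of the locally free $T\F_Y$, hence is locally free over $Y$.

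Next, I would invoke Lemma~\ref{excdist}, available because the ambient hypothesis $d_S(\sing(\F))\geq 2$ is in force, to get the base change identification
\[\mathcal{H}om_X(T\F,\O_X)\otimes\O_Y\;\cong\;\mathcal{H}om_Y(T\F_Y,\O_Y).\]
Applying $\mathcal{H}om(-,\O_X)\otimes\O_Y$ to $T\F\to TX\to N_\F\to 0$ and using the splittings of the previous paragraph — exactly as carried out in the proof of Theorem~\ref{propdual2} — yields
\[\mathcal{E}xt^1_X(N_\F,\O_X)\otimes\O_Y\;\cong\;\mathcal{H}om_Y\!\bigl(\mathrm{Tor}^X_1(N_\F,\O_Y),\,\O_Y\bigr).\]
Since the dual of a locally free sheaf is locally free, the right-hand side is locally free over $Y$, in particular flat over $Y$.

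There is no serious obstacle beyond what was already overcome in the proof of Theorem~\ref{propdual2}: the splitting argument relies only on the defining property of the rank stratification (both $T\F_Y$ and $(N_\F)_Y$ locally free), and the base-change identification relies only on the codimension hypothesis on $\sing(\F)$, both of which are assumed throughout. Assembling the local freeness on each stratum yields flatness of $\mathcal{E}xt^1_X(N_\F,\O_X)$ over the disjoint union $\coprod_{r,k} X_{r,k}$, which is the statement of the proposition.
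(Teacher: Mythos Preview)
Your proposal is correct and follows essentially the same approach as the paper: the paper itself presents this proposition as a byproduct of the proof of Theorem~\ref{propdual2}, and your argument is precisely the extraction of the stratum-by-stratum computation carried out there, showing $\mathcal{E}xt^1_X(N_\F,\O_X)\otimes\O_Y\cong \mathcal{H}om_Y(\mathrm{Tor}^X_1(N_\F,\O_Y),\O_Y)$ is locally free on each rank stratum $Y$. One cosmetic point: where you cite Lemma~\ref{excdist} for the base-change identification $\mathcal{H}om_X(T\F,\O_X)\otimes\O_Y\cong\mathcal{H}om_Y(T\F_Y,\O_Y)$, the paper simply writes ``by hypothesis''; the lemma as stated concerns base change along $T\to S$ rather than along a closed immersion $Y\hookrightarrow X$, so strictly speaking you are invoking the same $S_2$/codimension argument behind that lemma rather than the lemma itself---but this matches the level of detail in the paper's own proof.
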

In particular, if $\coprod X_Q$ denotes the flattening stratification of $\mathcal{E}xt^1_X(N_\F,\O_X)$, there is a morphism 
\[\coprod_{P(\F)}X_{P(\F)}\to \coprod_Q X_Q.\]
Now, by the construction of flattening stratification, $\coprod X_Q$ consist of an open stratum $U$ such that  $\mathcal{E}xt^1_X(N_\F,\O_X)|_U=0$, and closed strata whose closure is $\mathrm{sing}(\F)$. 
So the morphism $\coprod_{P(\F)}X_{P(\F)}\to \coprod_Q X_Q$ actually defines a stratification of  $\mathrm{sing}(\F)$.
\end{rmk}

\section{Singularities}\label{singi}
Theorem \ref{teo} gives a condition for a flat family of integrable Pfaff systems to give rise to a flat family of involutive distributions in terms of the flatness of the singular locus.
 We have then to be able to decide when can we apply the theorem. 
More precisely, say 
\[  0\to I(\F)\to \Omega^1_{X|S}\to  \Omega^1_{\F}\to 0\]
is a flat family of codimension $1$ integrable Pfaff systems, and let $s\in S$. 
How do we know when $\mathrm{sing}(\F)$ is flat around $s$? 
In this section we address this question and give a sufficient condition for $\mathrm{sing}(\F)$ to be flat at $s$ in terms of the classification of singular points of the Pfaff system $ 0\to I(\F)_s\to \Omega^1_{X_s}\to  \Omega^1_{\F_s}\to 0$.
\newline
\par From now on, we will only consider Pfaff systems such that $\Omega^1_{\F}$ is torsion-free.  

Remember that, if we have a Pfaff system of codimension $1$, \break $ 0\to I(\F)_s\to \Omega^1_{X_s}\to  \Omega^1_{\F_s}\to 0$, such that  $  \Omega^1_{\F_s}$ is torsion-free, we can consider, locally on $X$, that is given by a single $1$-form $\omega$ and that is integrable iff $\omega\wedge d\omega=0$.

\begin{defs}
Let $ 0\to I(\F)_s\to \Omega^1_{X_s}\to  \Omega^1_{\F_s}\to 0$ be a codimension $1$ integrable Pfaff system. 
And let $x\in X$. 
We say that $x$ is a \emph{Kupka singularity} of the Pfaff system if for some (eq. any) $1$-form $\omega$ locally defining $I(\F)$ around $x$ we have. 
We also say that $x$ is a Kupka singularity of the foliation defined by $I(\F)$.
\begin{align*}
\omega_x=0\\
d\omega_x\neq 0.
\end{align*}
\end{defs}
\begin{prop}
The set of Kupka singularities of a codimension $1$ foliation, if non-void, have a natural structure of codimension $2$ sub-scheme of $X$.
\end{prop}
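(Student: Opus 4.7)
The plan is to give $K$ the structure of an open subscheme of $\sing(\F)$ and then appeal to Kupka's local normal form theorem for the codimension count. By the preceding lemma, $I(\F)$ is a line bundle, so on any sufficiently small open $U\subset X$ it has a generator $\omega\in\Gamma(U,\Omega^1_X)$, unique up to multiplication by a unit, and $\sing(\F)\cap U=Z(\omega)$. For another choice of generator $\omega'=u\omega$ we have
\[
d\omega' = u\,d\omega + du\wedge\omega,
\]
so that $d\omega'|_{\sing(\F)} = u\cdot d\omega|_{\sing(\F)}$ since $\omega$ vanishes on $\sing(\F)$. Consequently the zero scheme of the restricted section $d\omega|_{\sing(\F)}$ is independent of the local generator, and one may define $K$ globally as the open subscheme of $\sing(\F)$ where $d\omega|_{\sing(\F)}$ is non-vanishing.

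For the codimension statement, I would invoke Kupka's local normal form theorem: at any Kupka point $x$, after passing to the formal completion, one may choose coordinates $y_1,\ldots,y_n$ in which
\[
\omega = A(y_1,y_2)\,dy_1 + B(y_1,y_2)\,dy_2,
\]
with $A,B$ depending only on the first two coordinates. Then $\sing(\F)$ is locally the zero scheme of the ideal $(A,B)\subset \widehat{\O}_{X,x}$. The standing assumption that $\Omega^1_\F$ is torsion-free makes $I(\F)$ saturated in $\Omega^1_X$, so its local generator $\omega$ cannot be divisible by any non-unit; in the normal form this means $A$ and $B$ share no non-trivial common factor. In the two-variable regular subring $k[[y_1,y_2]]\subset \widehat{\O}_{X,x}$ coprimeness forces $(A,B)$ to have height $2$, and by flatness of $\widehat{\O}_{X,x}\cong k[[y_1,\ldots,y_n]]$ over $k[[y_1,y_2]]$ the height is preserved in the extension. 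Hence $K$ has codimension $2$ in $X$ at every Kupka point.

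The main obstacle is Kupka's normal form theorem itself: its proof is a classical Frobenius-type argument based on the fact that the characteristic distribution $\ker(d\omega)$ is involutive of codimension $2$ near $x$ (using integrability $\omega\wedge d\omega=0$), which can then be straightened by a local change of coordinates. A direct alternative would be to argue purely in $\widehat{\O}_{X,x}$, using the Taylor expansion of the equation $\omega\wedge d\omega=0$ together with $d\omega_x\neq 0$ to show that the ideal $(f_1,\ldots,f_n)$ generated by the coefficients of $\omega$ has height exactly $2$; but this is essentially a reformulation of Kupka's theorem. Everything else — the intrinsic nature of $d\omega|_{\sing(\F)}$, the conversion of saturation into coprimeness of $A$ and $B$, and the codimension count via flatness — is routine commutative algebra.
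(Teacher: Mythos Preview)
Your proposal is correct and follows the classical route. The paper itself does not give an argument here: its entire proof is the citation ``See \cite{LN}''. What you have written is essentially the content one would find there --- define the Kupka set intrinsically as the open locus in $\sing(\F)$ where $d\omega$ is nonzero (your computation $d(u\omega)|_{\sing(\F)}=u\,d\omega|_{\sing(\F)}$ makes this well-defined), then invoke Kupka's local normal form to reduce to a two-variable ideal $(A,B)$, and use saturation of $I(\F)$ to get coprimeness and hence height~$2$.

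One small remark: the paper later proves a \emph{relative} version of Kupka's normal form (\cref{kupkar}) by exactly the mechanism you sketch --- the characteristic distribution $\ker(d\omega)$ is involutive of codimension~$2$, and straightening it via Frobenius gives the two-variable expression. So your ``main obstacle'' is in fact addressed in the paper, just a few pages after the proposition in question. You could simply forward-reference that argument (specialized to $S=\spec\C$) rather than treating the normal form as a black box. Also, the paper works in analytic neighborhoods rather than formal completions; either works here, but since the ambient setting is complex-analytic you may as well stay in that category and avoid the extra step of checking that formal coordinates suffice.
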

\begin{proof}[Proof.]
See \cite{LN}
\end{proof}

\begin{defs}
Let $ 0\to I(\F)_s\to \Omega^1_{X_s}\to  \Omega^1_{\F_s}\to 0$  be as before. 
Say $X$ have dimension $n$ over $\C$. 
Then we call $x\in X$ a \emph{Reeb singularity} if there exist an analytical neighborhood $U$ of $x$ such that $I(\F)$ may be generated by a form $\omega$ with the property that $\omega$ can be written, locally in $U$, in the form $\omega=\sum_{i=1}^n f_idz_i$ with
$f_i(x)=0$ for all $i$, and $(df_1)_x,\dots, (df_n)_x$ are linearly independent in $T^*_xX$.
\end{defs}

\begin{rmk}
Note that Kupka singularities and Reeb singularities are singularities in the sense of \ref{singf} i.e.: they are points in $\mathrm{sing}(\F)$.
\end{rmk}

We now give a version for families of the fundamental result of Kupka.

\begin{prop}\label{kupkar}
Let 
\[  0\to I(\F)\to \Omega^1_{X|S}\to  \Omega^1_{\F}\to 0\]
be a flat family of integrable Pfaff systems of codimension $1$, and let $\Sigma=\mathrm{sing}(\F)\subset X$. 
Let $s\in S$, and $x\in \Sigma_s$ be such that $x$ is a Kupka singularity of  $ 0\to I(\F)_s\to \Omega^1_{X_s}\to  \Omega^1_{\F_s}\to 0$.
 Then, locally around $x$ $I(\F)$ can be given by a \emph{relative} $1$-form $\omega(z,s)\in  \Omega^1_{X|S}$ such that
 \[\omega= f_1(z,s)dz_1+f_2(z,s)dz_2, \] 
i.e.: $\omega$ is locally the pull-back of a relative form $\eta\in \Omega^1_{Y|S}$ where $Y\to S$ is of relative dimension $2$.
\end{prop}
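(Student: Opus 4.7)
The plan is to adapt the classical proof of Kupka's normal form theorem to the relative setting, leveraging smoothness of $X\to S$ and flatness of $I(\F)$. Working locally in the analytic topology around $x$, and using that $X\to S$ is smooth, choose relative analytic coordinates $y_{1},\dots,y_{n}$ on a neighborhood of $x$ over $S$, so that $\omega=\sum_i g_i(y,s)\, dy_i$ as a section of $\Omega^1_{X|S}$. The relative de~Rham differential $d\omega\in\Omega^2_{X|S}$ is $f^{-1}\O_S$-linear in the sense already noted in the paper, and by the Kupka hypothesis on the central fiber, $(d\omega_s)(x)\neq 0$ as a $2$-form on $T_xX_s$; the classical Kupka argument in the absolute case further shows that its rank there equals exactly $2$. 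By semicontinuity of rank, $d\omega$ has relative rank $2$ in a neighborhood of $x$ after shrinking.

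Next, I would apply a relative Darboux theorem for the closed relative $2$-form $d\omega$ of constant rank $2$: there exist relative analytic coordinates $u_1,u_2,z_3,\dots,z_n$ near $x$ such that
\[ d\omega= du_1\wedge du_2. \]
This is the key geometric input. It can be obtained either by a Moser homotopy argument carried out fiberwise and over $S$, or more directly by applying Frobenius to the kernel distribution of $d\omega$, which is involutive because $d(d\omega)=0$; the resulting codimension $2$ foliation on the family produces the functions $u_1,u_2$ as local first integrals of the kernel, while $z_3,\dots,z_n$ are chosen as leaf coordinates. Integrability $\omega\wedge d\omega=0$ then forces $\omega$ to lie in the $\O_{X|S}$-ideal generated by $du_1, du_2$, so one can write $\omega=f_1\, du_1+f_2\, du_2$ with $f_1,f_2$ relative analytic functions. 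Applying $d$ and comparing with $d\omega=du_1\wedge du_2$ gives
\[ df_1\wedge du_1+df_2\wedge du_2=du_1\wedge du_2, \]
and expanding $df_i$ in the basis $du_1,du_2,dz_3,\dots,dz_n$ and equating coefficients of $du_k\wedge dz_j$ for $j\geq 3$ yields $\partial f_i/\partial z_j=0$ for $j\geq 3$. Hence $f_1,f_2$ depend only on $u_1,u_2$ and on $s$, and renaming $u_1,u_2$ to $z_1,z_2$ produces the claimed form, which exhibits $\omega$ as the pull-back of the relative $1$-form $f_1\, dz_1+f_2\, dz_2$ under the relative submersion $\pi:X\to Y$ cut out by the remaining $z_j$.

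The step I expect to be the main obstacle is the relative Darboux normal form for $d\omega$ over a possibly \emph{non-reduced} base $S$; the usual Moser trick requires care when the base carries nilpotents. The most robust route is the Frobenius/leaf-coordinate argument above, which only relies on the relative Frobenius theorem for an involutive subsheaf of $T_{S}X$ of constant rank $n-2$ in a neighborhood of $x$. The constancy of this rank across nearby fibers is exactly where the flatness of the family $I(\F)$ (hence of the Pfaff system and of $d\omega$ as a relative form in this neighborhood, since $d\omega$ has maximal rank $2$ at $(x,s)$) is used: flatness prevents a drop in the rank of $d\omega$ on infinitesimal deformations of the fiber, so the kernel distribution of $d\omega$ remains a relative subbundle on a neighborhood of $x$, and the relative Frobenius theorem applies, yielding the desired coordinate system uniformly over $S$.
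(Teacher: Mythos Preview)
Your Frobenius route is essentially the paper's argument. The paper proceeds via three auxiliary results: (i) $d\omega$ is locally decomposable (equivalently, $d\omega\wedge d\omega=0$, obtained by differentiating $\omega\wedge d\omega=0$ and using Pl\"ucker); (ii) the leaves of the codimension~$2$ foliation $\mathcal{G}$ defined by $d\omega$ are integral manifolds of $\omega$ (i.e.\ $\iota_v\omega=0$ for $v$ in the kernel of $d\omega$); (iii) the relative Lie derivative $L^S_v\omega=0$ for such $v$. Relative Frobenius then produces coordinates with $\partial/\partial z_j$ tangent to $\mathcal{G}$ for $j\geq 3$, and (ii)+(iii) immediately give $\omega=f_1\,dz_1+f_2\,dz_2$. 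Your coefficient comparison $df_1\wedge du_1+df_2\wedge du_2=h\,du_1\wedge du_2$ is exactly the computation hidden in $L^S_v\omega=0$ via Cartan's formula, so the two arguments coincide.

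Two small corrections. First, the Darboux normal form $d\omega=du_1\wedge du_2$ is unnecessary and you rightly fall back on Frobenius; but note that Frobenius only gives $d\omega=h\,du_1\wedge du_2$ with $h$ a unit, which is all your coefficient argument needs. Second, your attribution of constant rank $2$ of $d\omega$ to \emph{flatness} is misplaced: the upper bound $\mathrm{rk}\,d\omega\leq 2$ comes from integrability ($d\omega\wedge d\omega=0$, the paper's decomposability step), while the lower bound $d\omega\neq 0$ near $x$ is just openness of the non-vanishing locus of a section of $\Omega^2_{X|S}$ on the total space $X$. Flatness of the family plays no role at this point of the argument.
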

The proof is essentially the same as the proof of the classical Kupka theorem, as in \cite{Med}. 
One only needs to note that every ingredient there can be generalized to a relative setup.
\par For this we note that, as $p:X\to S$ is a smooth morphism, the relative tangent sheaf $T_SX$ is locally free and is the dual sheaf of the locally free sheaf $\Omega^1_{X|S}$.
We note also that, if $v\in T_SX(U)$, and $\omega\in \Omega^1_{X|S}(U)$, the relative Lie derivative $L_v(\omega)$ is well defined by Cartan's formula
\[L^S_v= d_S\iota_v(\omega)+\iota_v(d_S \omega),\]
where $\iota_v(\omega)=<v,\omega>$ is the pairing of dual spaces (and by extension also the map $\Omega^q_{X|S}\to \Omega^{q-1}_{X|S}$ determined by $v$), and $d_S$ is the \emph{relative} de Rham differential.
Also $\Omega^q_{X|S}=\wedge^q\Omega^1_{X|S}$. \newline
\par Finally we observe that, if $p:X\to S$ is of relative dimension $d$ and $X$ is a regular variety over $\C$  of total dimension $n$, a family of integrable Pfaff systems gives rise to a foliation on $X$ whose leaves are tangent to the fibers of $p$. 
Indeed, we can pull-back the subsheaf $I(\F)\subset \Omega^1{X|S}$ by the natural epimorphism
\[ f^*\Omega^1_S\to \Omega^1_X\to \Omega^1_{X|S}\to 0,\]
and get $J=I(\F)+f^*\Omega^1_S\subset \Omega^1_X$, which is an integrable Pfaff system in $X$, determining a foliation $\hat{\F}$. 
As  $f^*\Omega^1_S\subset J$, the leaves of the foliation $\hat{\F}$ are contained in the fibers $X_s$ of $p$. 
\par In the general case, where $p$ is smooth but $S$ and $X$ need not to be regular over $\C$, Frobenius theorem still gives foliations $\F_s$ in each fiber $X_s$. 
Indeed, as $p:X\to S$ is smooth, each fiber $X_s$ is regular over $\C$  and, $\Omega^1_{\F}$ being flat, $I(\F)_s\subset \Omega^1_{X_s}$ is an integrable Pfaff system on $X_s$.

\begin{prop}\label{pkr1}
Let $p:X\to S$ a smooth morphism over $\C$ and 
\[ 0 \to I(\F)\to \Omega^1{X|S}\to \Omega^1_{\F}\to 0 \]
a codimension $1$ flat family of Pfaff systems.
Let $\omega \in \Omega^1_{X|S}(U)$ be an integrable $1$-form  such that $I(\F)(U)= (\omega)$ in a neighborhood $U$  of a point $x\in X$. 
Then $d\omega$ is locally decomposable.
\end{prop}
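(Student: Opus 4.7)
The plan is to derive the Plücker identity $d\omega \wedge d\omega = 0$ from integrability, and then invoke the standard characterization of decomposable $2$-forms. Start from the integrability condition $\omega \wedge d\omega = 0$ in $\Omega^{3}_{X|S}(U)$. Since the relative de Rham differential $d = d_{S}$ satisfies $d^{2} = 0$ and the graded Leibniz rule (as recalled in the discussion preceding the proposition), applying $d$ to both sides yields
\[
0 \;=\; d(\omega \wedge d\omega) \;=\; d\omega \wedge d\omega \,-\, \omega \wedge d^{2}\omega \;=\; d\omega \wedge d\omega.
\]

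Smoothness of $p \colon X \to S$ makes $\Omega^{1}_{X|S}$ locally free of rank equal to the relative dimension $d$, with a local basis $dz_{1}, \dots, dz_{d}$ around $x$. Writing $d\omega = \sum_{i<j} a_{ij}\, dz_{i} \wedge dz_{j}$, the identity $d\omega \wedge d\omega = 0$ translates into the classical Plücker relations $a_{ij}a_{kl} - a_{ik}a_{jl} + a_{il}a_{jk} = 0$ for $i < j < k < l$, which cut out the cone over the Grassmannian $\mathrm{Gr}(2,d) \subset \wedge^{2}\C^{d}$. This exhibits $d\omega$ as a section taking values in the Plücker variety, so the remaining task is to upgrade this to a sheaf-level factorization $d\omega = \alpha \wedge \beta$ with $\alpha,\beta \in \Omega^{1}_{X|S}$ on a neighborhood of $x$.

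For the sheaf-level factorization I would proceed by contraction. Either $d\omega$ vanishes identically near $x$, in which case the claim is trivial; or one can choose a local relative vector field $v \in T_{S}X$ such that $\alpha := \iota_{v}(d\omega)$ is nonzero at some point near $x$. Contracting $d\omega \wedge d\omega = 0$ by $v$ gives $2\,\alpha \wedge d\omega = 0$, so $d\omega$ lies in the ideal generated by $\alpha$; the standard division lemma for forms (valid where $\alpha$ generates a direct summand of $\Omega^{1}_{X|S}$) then produces $\beta \in \Omega^{1}_{X|S}$ with $d\omega = \alpha \wedge \beta$.

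The main obstacle is points $x$ where $d\omega$ vanishes without vanishing identically nearby, so that any contraction $\alpha = \iota_{v}(d\omega)$ is forced to vanish at $x$ and the division lemma is not directly applicable. I expect to handle this by first obtaining the factorization on the dense open set where $\alpha$ generates a direct summand of $\Omega^{1}_{X|S}$, and then extending across the remaining locus (of codimension $\geq 1$) using the torsion-freeness hypothesis on $\Omega^{1}_{\F}$ together with the relative $S_{2}$-type arguments from \Cref{prel}. In the relative setting, the smoothness of $X \to S$ guarantees the necessary local freeness of $\Omega^{1}_{X|S}$ and compatibility with base change, so no substantive new difficulty arises beyond the classical Kupka-theorem argument cited after the statement.
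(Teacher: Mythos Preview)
Your argument is correct and is essentially the paper's own: differentiate $\omega \wedge d\omega = 0$ to obtain $d\omega \wedge d\omega = 0$ and invoke the Pl\"ucker criterion, which is exactly the content of \cite{Med}*{Lemma 2.5} to which the paper's proof defers. The extension worry in your final paragraph is unnecessary here, since in the subsequent lemmas the decomposition of $d\omega$ is only used near Kupka points where $d\omega_x \neq 0$, and there your contraction-and-division step applies directly without any $S_2$-type extension.
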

\begin{proof}[Proof.] As $T_SX= (\Omega^1{X|S})^\vee$, and $\Omega^q_{X|S}=\wedge^q \Omega^1_{X|S}$ we can apply Plücker relations to determine if $d\omega$ is locally decomposable and proceed as in \cite{Med}*{Lemma 2.5}. 
\end{proof}

\begin{lema}\label{lkr1}
Suppose that $d\omega_x\neq 0$.
Consider $\mathcal{G}_s$ the codimension $2$ foliations defined by $d\omega$ in $X_s$.
 In the neighborhood $V$ of $x\in X$ where $\mathcal{G}_s$ is non-singular for every $s$ we have the following.
 The leaves of $\mathcal{G}_s$ are integral manifolds of $\omega|_{X_s}$.
\end{lema}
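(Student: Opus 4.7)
The plan is to work fiberwise on the smooth fiber $X_s$ and exploit the integrability condition together with the non-vanishing of $d\omega$ in $V$. Fix a point $y\in V\cap X_s$ and let $L\subseteq X_s$ denote the leaf of $\mathcal{G}_s$ through $y$. By definition of $\mathcal{G}_s$, the tangent space $T_yL$ is precisely the kernel of the contraction map
\[
T_yX_s \longrightarrow \Omega^1_{X_s,y},\qquad v\longmapsto \iota_v\bigl(d\omega|_{X_s}\bigr)_y.
\]
Indeed, by \cref{pkr1}, $d\omega$ is locally decomposable, so on $X_s$ one can write $d\omega|_{X_s}=\eta_1\wedge\eta_2$ with $\eta_1,\eta_2$ linearly independent in $V$; the kernel of $v\mapsto \iota_v(\eta_1\wedge\eta_2)$ is then $\ker\eta_1\cap\ker\eta_2$, which has codimension $2$ and agrees with $T\mathcal{G}_s$.

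To prove $\omega|_L=0$ it suffices to show $\omega(v)=0$ for every $v\in T_yL$, since this is exactly the condition that $L$ be an integral manifold of $\omega|_{X_s}$. So let $v\in T_yL$, i.e., $\iota_v(d\omega|_{X_s})=0$. The family being integrable means $\omega\wedge d\omega=0$ in $\Omega^{3}_{X|S}$ (locally, as $\Omega^1_\F$ is torsion-free and of generic rank equal to that of $\Omega^1_{X|S}/(\omega)$); restricting to the fiber gives
\[
\omega|_{X_s}\wedge d\omega|_{X_s}=0\quad\text{in }\Omega^3_{X_s}.
\]
Apply $\iota_v$ and use the graded Leibniz rule for interior product:
\[
0=\iota_v\!\bigl(\omega|_{X_s}\wedge d\omega|_{X_s}\bigr)=\bigl(\omega|_{X_s}\bigr)(v)\cdot d\omega|_{X_s}-\omega|_{X_s}\wedge \iota_v\bigl(d\omega|_{X_s}\bigr).
\]
The second term vanishes because $v\in T_yL=\ker(\iota_{(-)} d\omega|_{X_s})$, so
\[
\bigl(\omega|_{X_s}\bigr)(v)\cdot d\omega|_{X_s}=0
\]
as an identity of $2$-forms on $X_s$ in a neighborhood of $y$.

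Since $y\in V$, where $\mathcal{G}_s$ is non-singular, $d\omega|_{X_s}$ is nowhere zero on a neighborhood of $y$, hence the scalar $(\omega|_{X_s})(v)$ must vanish. As $v\in T_yL$ was arbitrary, $\omega|_L=0$, and $L$ is an integral manifold of $\omega|_{X_s}$, as claimed. The only possibly delicate step is ensuring that the relative integrability $\omega\wedge d\omega=0$ restricts properly to each fiber $X_s$; this uses flatness of $\Omega^1_\F$ (so that $I(\F)_s\subseteq \Omega^1_{X_s}$ really gives a Pfaff system on $X_s$) together with the compatibility between the relative and absolute de Rham differentials on a smooth morphism, both of which are available in the present setup.
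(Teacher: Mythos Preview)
Your argument is correct and is precisely the contraction argument the paper has in mind: the paper reduces the lemma to showing that $\iota_v(d\omega)=0$ implies $\iota_v(\omega)=0$ and then defers to \cite{Med}*{Lemma 2.6}, whose proof is exactly the computation you carry out (contract $\omega\wedge d\omega=0$ with $v$, use the Leibniz rule, and conclude from $d\omega\neq 0$). The only cosmetic difference is that the paper phrases everything with relative vector fields $v\in T_SX$ rather than restricting to a fixed fiber $X_s$, but since the statement is fiberwise this is immaterial.
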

\begin{proof}[Proof.] We only have to prove that, for every $v\in T_SX$ such that  $\iota_v(d\omega)=0$, then $\iota_v(\omega)=0$.
We can do this exactly as in  \cite{Med}*{Lemma 2.6}.
\end{proof}

\begin{lema}\label{lkr2}
With the same hypothesis as \cref{lkr1}. 
Let $v$ be a vector field tangent to $\mathcal{G}$.
Then the \emph{relative}  Lie derivative of $\omega$ with respect to $v$ is zero.
\end{lema}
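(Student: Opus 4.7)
The plan is to apply the relative Cartan formula
\[
L^S_v(\omega) = d_S\iota_v(\omega) + \iota_v(d_S\omega)
\]
and show both terms on the right vanish. The formula is valid in the relative setting, as noted in the discussion preceding \cref{pkr1}, since $T_SX$ and $\Omega^1_{X|S}$ are dual locally free sheaves and $d_S$ is the relative de Rham differential.

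The second summand vanishes by the very definition of $v$: since $v$ is tangent to $\mathcal{G}$, and $\mathcal{G}$ is the foliation locally determined by $d\omega$, we have $\iota_v(d_S\omega) = \iota_v(d\omega) = 0$. For the first summand, I would invoke \cref{lkr1}: the leaves of $\mathcal{G}_s$ are integral manifolds of $\omega|_{X_s}$, which in sheaf-theoretic terms says exactly that $\iota_v(\omega) = 0$ for every vector field $v$ tangent to $\mathcal{G}$. Therefore $d_S\iota_v(\omega) = d_S(0) = 0$.

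Combining the two vanishings yields $L^S_v(\omega) = 0$, as desired. The proof is essentially formal once the two preceding lemmas are available; there is no real obstacle, provided one is careful that the Cartan formula used is the relative one, which amounts only to observing that $d_S$ and the contraction $\iota_v$ restrict properly to the sub-complex of relative forms. This is harmless here because we have already restricted to a neighborhood $V$ where the relevant sheaves are locally free and the relative differential is well defined.
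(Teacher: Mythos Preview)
Your proof is correct and is exactly the standard Cartan-formula argument: the paper itself defers to \cite{Med}*{Lemma 2.7}, whose proof is precisely this computation, so your approach coincides with the intended one.
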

\begin{proof}[Proof.] Like the proof of  \cite{Med}*{Lemma 2.7}.
\end{proof}

\begin{lema}\label{lkr3} Same hypothesis as \cref{lkr1} and \ref{lkr2}, then
 $\mathrm{sing}(\omega)$ is saturated by leaves of $(\mathcal{G}_s)_{s\in S}$ (i.e.: take $y\in V$ a zero of $\omega$ such that $p(y)=s$, and $L$ the leaf of $\mathcal{G}_s$ going through $y$. Then the inclusion $L\to V$ factorizes through $\mathrm{sing}(\omega)$).
\end{lema}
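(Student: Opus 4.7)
The plan is to exploit Lemma \ref{lkr2}, which gives $L^S_v\omega = 0$ for every relative vector field $v\in T_SX$ tangent to $\mathcal{G}$. Such a $v$ is tangent to the fibers of $p$, so its local flow $\phi^v_t$ acts inside each fiber $X_s$; applying Cartan's formula inside $X_s$ translates the vanishing of the relative Lie derivative into the invariance $(\phi^v_t)^*(\omega|_{X_s}) = \omega|_{X_s}$.

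Now fix $y\in V$ with $\omega_y = 0$, set $s=p(y)$, and let $L$ be the leaf of $\mathcal{G}_s$ through $y$. For any local vector field $v$ tangent to $\mathcal{G}$ defined near $y$, the invariance identity at $y$ reads
\[
\omega_y \;=\; (\phi^v_t)^{*}\omega\big|_{y} \;=\; \omega_{\phi^v_t(y)}\circ d\phi^v_t|_y ,
\]
and since $d\phi^v_t|_y$ is an isomorphism, $\omega_y=0$ forces $\omega_{\phi^v_t(y)}=0$ for all $t$ in the flow's domain. Because $\mathcal{G}_s$ is non-singular in $V$, Frobenius supplies local coordinates in which $L$ appears as a coordinate plane and tangent vector fields to $\mathcal{G}_s$ span $TL$; composing finitely many flows of such fields, one reaches any point of $L$ in a neighborhood of $y$, so $\omega$ vanishes on that neighborhood. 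Iterating along chains of such neighborhoods covering $L$ (which is connected as a leaf), one concludes $\omega_{y'}=0$ for every $y'\in L$.

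For the scheme-theoretic factorization, recall from the remark preceding \cref{teo} that $\mathrm{sing}(\omega)$ is locally cut out by the ideal $(f_1,\ldots,f_n)$ of the coefficients of a local expression $\omega=\sum f_i\,dz_i$. The previous paragraph shows every $f_i$ vanishes at every point of $L$; since $L$ is a smooth complex manifold, hence reduced, this is equivalent to the pullback ideal being zero in $\O_L$. Hence $L\hookrightarrow V$ factors through $\mathrm{sing}(\omega)$, as desired. The main subtlety to verify is that enough vector fields tangent to $\mathcal{G}$ can be chosen as elements of $T_SX$ (rather than only as vector fields in a single fiber $X_s$), so that the framework of \cref{lkr2} really applies and the invariance holds as a statement about the relative form $\omega$; this is not serious, since $T\mathcal{G}$ is the kernel of the $\O_X$-linear contraction $T_SX \to \mathcal{H}om_{\O_X}(\Omega^1_{X|S},\Omega^2_{X|S})$ with $d\omega$, and this kernel is a sub-distribution of $T_SX$ whose restriction to each fiber is exactly the tangent distribution of $\mathcal{G}_s$.
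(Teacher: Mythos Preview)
Your proof is correct and is essentially the same argument the paper invokes: the paper simply restricts to the fibre $X_s$ and cites \cite{Med}*{Lemma 2.7}, whose content is precisely the flow-invariance computation you carry out (using $L_v\omega=0$ to get $(\phi^v_t)^*\omega=\omega$, hence propagation of zeros along the leaf). Your additional remarks on the scheme-theoretic factorization via reducedness of $L$ and on lifting tangent fields of $\mathcal G_s$ to sections of $T_SX$ are accurate and make explicit what the paper leaves implicit in the citation.
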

\begin{proof}[Proof.] 
We can do this entirely on $X_s$. Then this reduces to  \cite{Med}*{Lemma 2.7}.
\end{proof}

\begin{proof}[Proof of \cref{kupkar}.] 
We can take an analytical neighborhood $V$ of $x\in X$ such that $V\cong U\times D^d$ with $U\subseteq S$ an open set, $D^d$ a complex polydisk, and 
\begin{eqnarray*}
p|V:V\cong U\times D^d &\longrightarrow& U.\\
(s, z_1,\dots,z_d)&\mapsto& s
\end{eqnarray*}•
Also, by Frobenius theorem, we can choose the coordinates $z_i$ in such a way that 
\[ v_i=\frac{\partial}{\partial z_i}\in T_SX(V),\qquad 3\leq i\leq d, \]
are tangent to $d\omega$.
Then, as $L^S_{v_i}\omega=0$ and $\iota_{v_i}\omega=0$, we can write $\omega$ as
\[ \omega=f_1(z,s)dz_1+f_2(z,s)dz_2 .\]
\end{proof}

\begin{prop}\label{partek}
Let 
\[  0\to I(\F)\to \Omega^1_{X|S}\to  \Omega^1_{\F}\to 0\]
be a flat family of integrable Pfaff systems of codimension $1$, and let $\Sigma=\mathrm{sing}(\F)\subset X$. 
Let $s\in S$, and $x\in \Sigma_s$ be such that $x$ is a Kupka singularity of  $ 0\to I(\F)_s\to \Omega^1_{X_s}\to  \Omega^1_{\F_s}\to 0$.
 Then $\Sigma\to S$ is smooth around $x$. 
\end{prop}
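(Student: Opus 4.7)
The plan is to apply \cref{kupkar} to reduce the problem to a two-dimensional local model and then verify smoothness of $\Sigma \to S$ via the Jacobian criterion. By \cref{kupkar}, near $x$ there exist local analytic coordinates $(z_1,\dots,z_d)$ on $X$ and a neighborhood of $s$ on $S$ in which $I(\F)$ is locally generated by a relative $1$-form
\[
\omega \;=\; f_1(z_1,z_2,s)\,dz_1 + f_2(z_1,z_2,s)\,dz_2,
\]
depending only on the first two $X$-coordinates and on $s$. In these coordinates the ideal cutting out $\Sigma = \sing(\F)$ is $(f_1,f_2)$, and $\Sigma$ is obtained by pullback under a smooth projection $X \to Y$ (where $Y \to S$ has relative dimension $2$, with coordinates $(z_1,z_2,s)$) from $V(f_1,f_2) \subset Y$. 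Since smoothness is stable under smooth base change, it suffices to prove that $V(f_1,f_2) \to S$ is smooth of relative dimension $0$ at the image $\bar x$ of $x$.

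The relative exterior derivative computes as $d_S\omega = (\partial f_2/\partial z_1 - \partial f_1/\partial z_2)\,dz_1 \wedge dz_2$, so the Kupka condition $d\omega_x \neq 0$ translates into $(\partial f_2/\partial z_1 - \partial f_1/\partial z_2)(\bar x) \neq 0$. Without loss of generality assume $(\partial f_1/\partial z_2)(\bar x) \neq 0$. By the implicit function theorem in the relative setting, $V(f_1) \subset Y$ is then a smooth divisor over $S$ near $\bar x$, parametrised locally by $z_2 = \phi(z_1,s)$. Restricting $f_2$ to $V(f_1)$ and differentiating $f_1(z_1,\phi(z_1,s),s)=0$ to compute $\phi_{z_1}$, one obtains
\[
\frac{\partial}{\partial z_1}\bigl(f_2(z_1,\phi(z_1,s),s)\bigr)\bigg|_{\bar x} \;=\; \frac{-\det J(\bar x)}{(\partial f_1/\partial z_2)(\bar x)},
\]
where $J = (\partial f_i/\partial z_j)_{i,j=1,2}$ is the relative Jacobian. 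Hence smoothness of $V(f_1,f_2) \to S$ at $\bar x$ reduces to the non-vanishing of $\det J(\bar x)$.

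The main obstacle is upgrading the Kupka condition (non-vanishing of the antisymmetric combination $\partial f_2/\partial z_1 - \partial f_1/\partial z_2$) to the full-rank statement $\det J(\bar x) \neq 0$. To bridge this I would exploit two additional pieces of structure beyond Kupka alone: first, that $\Omega^1_\F$ is torsion-free forces $I(\F)$ to be saturated in $\Omega^1_{X|S}$, so that $(f_1,f_2)$ forms a regular sequence at $\bar x$ and $V(f_1,f_2)$ has codimension exactly $2$ in $Y$; second, the local decomposability of $d\omega$ from \cref{pkr1}, combined with integrability, permits a further analytic change of variables in the $(z_1,z_2)$-plane bringing the linear parts of $f_1,f_2$ at $\bar x$ into a normal form whose differentials $df_1(\bar x), df_2(\bar x)$ are linearly independent. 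This is the refined half of the classical Kupka theorem, going beyond the pullback structure supplied by \cref{kupkar}. Once $\det J(\bar x) \neq 0$ is secured, the Jacobian criterion yields $V(f_1,f_2) \to S$ étale at $\bar x$, and pulling back through the smooth projection $X \to Y$ gives $\Sigma \to S$ smooth of relative dimension $d-2$ at $x$.
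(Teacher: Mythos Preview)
Your overall line is exactly the paper's: invoke \cref{kupkar} to get the local form $\omega=f_1\,dz_1+f_2\,dz_2$, identify $\Sigma$ with $V(f_1,f_2)$ pulled back along a smooth projection, and then appeal to a Jacobian argument. The paper is terse at this last step, simply asserting that ``$d\omega\neq 0$ implies $\Sigma$ is smooth over $S$''. You are right to notice the tension between the Kupka condition (non-vanishing of the antisymmetric combination $\partial_{z_1}f_2-\partial_{z_2}f_1$) and the Jacobian criterion (non-vanishing of $\det J$); this is a genuine subtlety that the paper's one-line proof does not address.

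However, the mechanism you propose to close the gap cannot work. A change of coordinates $(z_1,z_2)\mapsto (w_1,w_2)$ replaces $(f_1,f_2)$ by $\O$-linear combinations $(g_1,g_2)$ via the transpose of the Jacobian of the coordinate change, so the ideal $(f_1,f_2)=(g_1,g_2)$ and the span of $df_1(\bar x),df_2(\bar x)$ in $T^*_{\bar x}Y$ are preserved. Thus no analytic change of variables can promote linearly dependent differentials to independent ones, and the scheme $V(f_1,f_2)$ is the same in any coordinates. Concretely, with $S$ a point and $\omega=z_2\,dz_1+z_1^2\,dz_2$ one has $\Omega^1_\F$ torsion-free, $(f_1,f_2)$ a regular sequence, and $d\omega_0=-dz_1\wedge dz_2\neq 0$, yet $\Sigma=\mathrm{Spec}\,\C[z_1,z_2]/(z_2,z_1^2)$ is non-reduced, hence not smooth. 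So smoothness of $\Sigma\to S$ at a Kupka point fails in general; what survives, and what \cref{papa} actually uses, is \emph{flatness}. For that, from $\partial_{z_1}f_2-\partial_{z_2}f_1\neq 0$ at $x$ one of the off-diagonal partials is nonzero, say $\partial_{z_1}f_2(x)\neq 0$; then $V(f_2)\to S$ is smooth of relative dimension $1$ near $x$, and $\Sigma$ is a relative Cartier divisor in $V(f_2)$ provided $f_1$ does not vanish identically on any nearby fibre $V(f_2)_s$, which is exactly the fibrewise torsion-freeness of $\Omega^1_{\F_s}$.
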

\begin{proof}[Proof.] By \ref{kupkar} above, we can determine $\Sigma$ around $x$ as the common zeroes of $f_1(z,s)$ and $f_2(z,s)$. The condition $d\omega\neq0$ implies $\Sigma$ is \emph{smooth} over $S$ (remember that we are using the \emph{relative} de Rham differential and that means the variable $s$ counts as a constant).
\end{proof}

\begin{prop}\label{parter}
Let 
\[  0\to I(\F)\to \Omega^1_{X|S}\to  \Omega^1_{\F}\to 0\]
be a flat family of integrable Pfaff systems of codimension $1$, $\Sigma=\mathrm{sing}(\F)\subset X$, and 
$s\in S$. Suppose $x\in \Sigma_s$ is such that $x$ is a \emph{Reeb} singularity of $ 0\to I(\F)_s\to \Omega^1_{X_s}\to  \Omega^1_{\F_s}\to 0$. Then $\Sigma\to S$ is \'etal\'e around $x$. 
\end{prop}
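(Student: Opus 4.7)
The plan is to reduce the statement to the local criterion for flatness via an explicit local description of $\Sigma=\mathrm{sing}(\F)$ around $x$.

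Since $p:X\to S$ is smooth at $x$, pick relative coordinates $z_1,\dots,z_n$ on a neighborhood $V$ of $x$, where $n$ is the relative dimension. Because $\Omega^1_{\F}$ is torsion-free, the lemma preceding \cref{propdual1} gives that $I(\F)$ is a line bundle, so after shrinking $V$ we may write $I(\F)|_V=\O_V\cdot\omega$ with
\[
\omega \;=\; \sum_{i=1}^n F_i\, dz_i \;\in\; \Omega^1_{X|S}(V), \qquad F_i\in \O_V.
\]
By the remark in \cref{codim1case} computing $\sing(\F)$ from a local defining form, near $x$ the scheme $\Sigma$ is cut out by the ideal $(F_1,\dots,F_n)\subseteq \O_{X,x}$.

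The restriction $\omega|_{X_s}$ is a local generator of $I(\F)_s$ near $x$, so it differs from the Reeb form $\omega_s=\sum_i f_i\, dz_i$ by a unit $u\in\O_{X_s,x}^{\times}$. Since $f_i(x)=0$, the Jacobian of $(uf_1,\dots,uf_n)$ at $x$ equals $u(x)$ times the Jacobian of $(f_1,\dots,f_n)$, so the Reeb hypothesis that $(df_1)_x,\dots,(df_n)_x$ span $T^*_xX_s$ translates into invertibility of the fiberwise matrix $\bigl(\partial F_i/\partial z_j\bigr)_x$. Equivalently, the classes of $F_1,\dots,F_n$ in $\O_{X_s,x}$ generate its maximal ideal and hence form a regular sequence cutting out $\{x\}$ as a reduced point of $X_s$.

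Since $p$ is flat at $x$ and $F_1,\dots,F_n$ reduce to a regular sequence on the fiber, the standard local criterion of flatness (a fiberwise-regular sequence in a flat family is itself a regular sequence whose quotient remains flat over the base) yields that $(F_1,\dots,F_n)$ is a regular sequence in $\O_{X,x}$ and that $\O_{\Sigma,x}=\O_{X,x}/(F_1,\dots,F_n)$ is flat over $\O_{S,s}$. The fiber of $\Sigma\to S$ at $s$ is $\O_{X_s,x}/(f_1,\dots,f_n)\cong k(x)$, so $\Sigma\to S$ is flat and unramified at $x$, i.e.\ \'etale. The only substantive step is translating the fiberwise Reeb transversality into a fiberwise-regular sequence for the relative equations; everything after that is routine commutative algebra.
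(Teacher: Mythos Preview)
Your argument is correct and follows the same route as the paper: write a local generator $\omega=\sum_i F_i\,dz_i$ of $I(\F)$, identify $\Sigma$ with the vanishing of the $F_i$, and use the Reeb condition to see that the $F_i$ cut out the reduced point $\{x\}$ on the fiber, whence $\Sigma\to S$ is \'etale at $x$. The paper's proof is a one-liner asserting exactly this; you have simply unpacked the implicit commutative algebra (the unit ambiguity in the generator, the fiberwise regular sequence, and the local criterion for flatness).
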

\begin{proof}[Proof.]
The condition on $x$ means we can actually give $I(\F)$ locally by a relative $1$-form $\omega \in \Omega^1_{X|S}$, $\omega=\sum_{i=1}^n f_i(z,s)dz_i$, with $n$ the relative dimension of $X$ over $S$ and the $df_i$'s linearly independent on $x$. Then $\Sigma$ is given by the equations $f_1=\dots=f_n=0$ and is therefore \'etal\'e over $S$.
\end{proof}
 With this two proposition we are almost in condition to state our condition for flatness of the dual family, we just need a general 

\begin{lema}\label{partea}
Let $X\xrightarrow{p} S$ be a morphism between schemes of finite type over an algebraically closed field $k$. 
Let $U\subseteq X$ be the maximal open sub-scheme such that $U\xrightarrow{p} S$ is flat, and $s\in S$ a point such that $X_s$ is without embedded components. . 
 If $U_s\subseteq X_s$ is dense, then $U_s=X_s$.

\end{lema}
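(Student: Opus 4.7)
The plan is to argue by contradiction using the characterisation of flatness through associated points of the fibre. Suppose there exists $x_0 \in X_s\setminus U_s$; then $p$ fails to be flat at $x_0$. The goal is to promote this non-flatness to an associated point of the fibre, which, under the hypothesis on embedded components, will turn out to be a generic point of an irreducible component of $X_s$, and hence must lie in the dense open set $U_s$ — giving a contradiction.

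The key non-trivial input I would invoke is the criterion of flatness via associated points of the fibre (EGA IV, Proposition 11.3.10): if $p$ is flat at every associated point $\eta$ of $X_s$ such that $x_0\in\overline{\{\eta\}}$, then $p$ is flat at $x_0$. Taking the contrapositive, there must exist an associated point $\eta$ of $X_s$ with $x_0\in\overline{\{\eta\}}$ and $\eta\notin U$.

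I then apply the two remaining hypotheses. Since $X_s$ has no embedded components, the associated points of $\O_{X_s}$ are precisely the generic points of its irreducible components, so $\eta$ is such a generic point. But any dense open subset of $X_s$ contains every generic point of every irreducible component, so density of $U_s$ in $X_s$ forces $\eta\in U_s\subseteq U$, contradicting $\eta\notin U$. The conclusion $U_s=X_s$ follows.

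The main obstacle — and essentially the only non-trivial step — is the passage from non-flatness at $x_0$ to non-flatness at an associated point of the fibre; a direct attempt using the local criterion, e.g.\ working with the non-zero $\O_{X_s,x_0}$-module $\mathrm{Tor}_1^{\O_{S,s}}(\O_{X,x_0},k(s))$, runs into exactly the issue that an associated prime of this $\mathrm{Tor}$ need not a priori be an associated prime of $\O_{X_s,x_0}$, and the cited proposition of EGA is precisely what bridges this gap.
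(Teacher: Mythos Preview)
Your proof is correct, and it takes a genuinely different route from the paper's own argument.

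The paper does not invoke the associated-points criterion for flatness. Instead, it uses its earlier ad hoc criterion (\cref{rebusc}) to reduce to the case where $S$ is either $\spec(A)$ for a DVR $A$, or $\spec(k[T]/(T^{n+1}))$. In the DVR case it argues directly: if $\O_{X,x}$ had a nonzero $A$-torsion element $f$, the support of $f$ would miss $U$, and hence the support of $f|_{X_s}$ would miss $U_s$; but that support is a union of closures of associated points of $\O_{X_s}$, which under the ``no embedded components'' hypothesis are generic points of components of $X_s$, contradicting density of $U_s$. The Artinian case is handled analogously via \cref{artcrit}. Thus the paper rederives, by hand and over these two special bases, precisely the passage from non-flatness at $x_0$ to non-flatness at an associated point of the fibre---the step you import wholesale from EGA.

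Your approach is cleaner and more general: the criterion you cite (flat at $x$ iff flat at every $x'\in\mathrm{Ass}(\mathscr{F}_s)$ specializing to $x$) requires only that $S$ be locally noetherian and $p$ locally of finite type, so it applies without the finite-type-over-$k$ hypothesis and without the reduction machinery. The paper's approach, on the other hand, is more self-contained and showcases the criterion \cref{rebusc}, which the author developed for use elsewhere in the paper. One minor caution: you should double-check the precise EGA numbering (the associated-points flatness criterion is in the vicinity of 11.3 in EGA~IV$_3$, but 11.3.10 itself may be the openness statement rather than the pointwise criterion; the Stacks Project, Tag~0471, gives exactly the form you use).
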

\begin{proof}[Proof.] By \cref{rebusc} we must check that, for $A$ either a discrete valuation domain or an Artin ring of the form $k[T]/(T^{n+1})$, and every arrow $\mathrm{Spec}(A)\to S$, the pull-back scheme $X_\mathrm{Spec}(A)$ is flat over $ \mathrm{Spec}(A)$. 
In this way the problem reduces to the case where $S=\mathrm{Spec}(A)$.
\par {\bf (i) Case $A$ DVD.} In this case, $A$ being a principal domain, flatness of $X$ over $\mathrm{Spec}(A)$ is equivalent to the local rings $\O_{X,x}$ being torsion-free $A_{p(x)}$-modules for every point $x\in X$ (\cite[IV.1.3]{SGA1} , so it suffices to consider the case $A_{p(x)}=A$.
\par Now, let $f\in \O_{X,x}$ and $J=\mathrm{Ann}_{ A}(f)\subseteq A$. 
Suppose $J\neq (0)$ and consider $V(J)\subseteq  \mathrm{Spec}(A)$, clearly $\mathrm{supp}(f)\subseteq p^{-1}(V(J))\subseteq X$.
 So $U\cap \mathrm{supp}(f)=\varnothing$. 
But then the restriction $f|s$ of $f$ to $X_s$ have support disjoint with $U_s$.
On the other hand 
\[ \mathrm{supp}(f|_s)=\overline{\{\mathfrak{P}_1\}}\cup\dots\cup\overline{\{\mathfrak{P}_m\}}\subseteq X_s,\]
where $\{\mathfrak{P}_1,\dots, \mathfrak{P}_m\}=\mathrm{Ass}(\O_{X_s,x}/(f|_s))\subseteq \mathrm{Ass}(\O_{X_s,x})$.\newline
As $X_s$ is without immersed components, the $\mathfrak{P}_i$'s are all minimal, so $X_k\cap \overline{\mathfrak{P}_i}$ is an irreducible component of $X_k$, but 
\[U_s\cap \overline{\mathfrak{P}_i}=\varnothing.\]
Contradicting the hypothesis that $U_s$ is dense in $X_s$.
\par {\bf (ii) Case $A=k[T]/(T^{n+1})$.} Using \cref{artcrit} works just as the first case taking $f\in \O_{X,x}$ as a section such that $T^nf=0$ but $f\notin T\O_{X,x}$.

\end{proof}

We have already said that, in a Pfaff system, Kupka singularities, if exists, form a codimension $2$ sub-scheme of $X$. We will call $\mathcal{K}(\F)$ this sub-scheme, and $\overline{\mathcal{K}}(\F)$ its closure. 

\begin{teo}\label{papa}
Let 
\[  0\to I(\F)\to \Omega^1_{X|S}\to  \Omega^1_{\F}\to 0\]
be a flat family of integrable Pfaff systems, for $s\in S$ consider the Pfaff system $ 0\to I(\F)_s\to \Omega^1_{X_s}\to  \Omega^1_{\F_s}\to 0$.
If $\mathrm{sing}(\F_s)$ is without embedded components and $\mathrm{sing}(\F_s)=\overline{\mathcal{K}}(\F_s)\cup\{p_1,\dots,p_m\}$   where the $p_i$'s are Reeb-type singularities, then $\mathrm{sing}(\F)\to S$ is flat in a neighborhood of $s\in S$.
 
\end{teo}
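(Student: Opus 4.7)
The plan is to combine the pointwise smoothness/\'etaleness of $\Sigma \to S$ at Kupka and Reeb singularities (\cref{partek} and \cref{parter}) with the density criterion of \cref{partea}. Write $\Sigma = \mathrm{sing}(\F)$ and let $U \subseteq \Sigma$ be the maximal open sub-scheme on which the structure morphism $\Sigma \to S$ is flat; since flatness of a coherent sheaf (equivalently, of a morphism of finite type between noetherian schemes) is an open condition on the source, $U$ is indeed open. The goal is to show that $U$ contains the whole fiber $\Sigma_s$, which is enough to produce an open neighborhood of $s$ in $S$ over which $\Sigma$ is flat.

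Before applying the lemma I first identify the fiber $\Sigma_s$ of the relative singular scheme with the absolute singular scheme $\mathrm{sing}(\F_s)$ of the fibered Pfaff system. This is the kind of compatibility already used in the proof of \cref{propdual2}: since $\Omega^1_{\F}$ is $S$-flat and $\Omega^1_{\F,s}$ is torsion-free, the exchange property for local $\mathcal{E}xt$ sheaves \cite{AK}*{Theorem 1.9} gives
\[\mathcal{E}xt^1_X(\Omega^1_\F, \O_X) \otimes k(s) \cong \mathcal{E}xt^1_{X_s}(\Omega^1_{\F,s}, \O_{X_s}),\]
and passing to annihilator ideals yields $\Sigma_s = \mathrm{sing}(\F_s)$ as closed sub-schemes of $X_s$.

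With this identification in place, \cref{partek} gives $U \supseteq \mathcal{K}(\F)$ (smoothness at Kupka points) and \cref{parter} gives $p_i \in U$ for each Reeb point (\'etaleness there), so that $U_s \supseteq \mathcal{K}(\F_s) \cup \{p_1,\dots,p_m\}$. Because $\mathcal{K}(\F_s)$ is open in its closure $\overline{\mathcal{K}}(\F_s)$, and the $p_i$ are the remaining points of $\mathrm{sing}(\F_s)$ by hypothesis, $U_s$ is dense in $\mathrm{sing}(\F_s) = \overline{\mathcal{K}}(\F_s) \cup \{p_1,\dots,p_m\}$. Using the hypothesis that $\mathrm{sing}(\F_s)$ has no embedded components, \cref{partea} applies to $\Sigma \to S$ at $s$ and produces $U_s = \Sigma_s$, which by openness of $U$ means that $\Sigma \to S$ is flat over an open neighborhood of $s$.

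The main delicate point is the scheme-theoretic compatibility $\Sigma_s = \mathrm{sing}(\F_s)$: one must make sure the $\mathcal{E}xt^1$ base-change isomorphism is strong enough to identify annihilator ideal sheaves (not merely their underlying sets), so that the hypothesis ``no embedded components'' transfers from the \emph{absolute} singular scheme of the fiber to the \emph{fiber of the relative} singular scheme, which is what \cref{partea} requires. Everything else is local bookkeeping: once that identification is in hand, the strategy is just ``Kupka gives smooth, Reeb gives \'etale, fill in the rest by density + \cref{partea}''.
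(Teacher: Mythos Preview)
Your proof is correct and follows essentially the same route as the paper's: \cref{partek} and \cref{parter} put the Kupka and Reeb points of $\mathrm{sing}(\F_s)$ into the flat locus $U$, and then \cref{partea} upgrades density of $U_s$ to $U_s=\Sigma_s$. One simplification: in the codimension-$1$ setting of this section the identification $\Sigma_s=\mathrm{sing}(\F_s)$ that you flag as delicate is immediate from the concrete description of $\mathrm{sing}(\F)$ as the zero scheme of the local generator $\omega$ (the remark after \cref{propdual1}), so no $\mathcal{E}xt^1$ base-change argument is needed.
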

\begin{proof}[Proof.]
Indeed, by \cref{partek},  $\mathrm{sing}(\F)$ is flat in a neighborhood of $ \mathcal{K}(\F_s)$, and, as $\mathrm{sing}(\F_s)$ is without embedded components, we can apply \cref{partea} to conclude that  $\mathrm{sing}(\F)$ is flat in a neighborhood of $\overline{ \mathcal{K}(\F_s)}$.
\par Lastly, from \cref{parter}, follows that $\mathrm{sing}(\F)$ is flat in a neighborhood of $\{p_1,\dots,p_m\}$.
\end{proof}
\section{Applications}\label{apps}

Let $X=\PP^n(\C)$. It's well known that the class of sheaves $\mathscr{F}$ that splits as a direct sum of line bundles $\mathscr{F}\cong\bigoplus_i \O(k_i)$ have no non-trivial deformations. Indeed, as deformation theory teach us, first order deformations of   $\mathscr{F}$ are parametrized by $\mathrm{Ext}^1(\mathscr{F},\mathscr{F})$, in this case we have 
\begin{align*}
\mathrm{Ext}^1(\mathscr{F},\mathscr{F})&\cong \bigoplus _{i,j} \mathrm{Ext}^1(\O(k_i),\O(k_j))\cong\\
\cong  \bigoplus _{i,j} \mathrm{Ext}^1(\O,\O(k_j-k_i))&\cong  \bigoplus _{i,j}  H^1(\PP^n, \O(k_j-k_i))=0
\end{align*}
In particular, given a flat family of distributions 
\[
 0\to T\F\to T_S (\PP^n\times S)\to N_\F\to 0,
\]
such that, for some $s\in S$, $T\F_s \cong \bigoplus_i \O(k_i)$, then the same decomposition holds true for the rest of the members of the family.
\par When we deal with codimension $1$ foliations it's more common, however, to work with Pfaff systems or, more concretely, with integrable twisted $1$-forms $\omega \in \Omega^1_{\PP^n}(d)$, $\omega\wedge d \omega=0$ (see \cite{LN}). It's then than the following question emerged: Given a form  $\omega \in \Omega^1_{\PP^n}(d)$ such that the vector fields that annihilate $\omega$ generate a split sheaf (i.e.: a sheaf that decomposes as direct sum of line bundles), will the same feature hold for every deformation of $\omega$? Such question was addressed by Cukierman and Pereira in \cite{split}. Here we use our results to recover the theorem of Cukierman-Pereira as a special case.

\par
As was observed before, every time we have a codimension $1$ Pfaff system
\[  0\to I(\F)\to \Omega^1_{X}\to  \Omega^1_{\F}\to 0\]
such that $ \Omega^1_{\F}$ is torsion-free, then $ I(\F)$ must be a line bundle.
 In the case $X=\PP^n(\C)$, then $I(\F)\cong \O_{\PP^n}(-d)$ for some $d\in \Z$. 
Is then equivalent to give a Pfaff system and to give a morphism $0\to \O_{\PP^n}(-d)\to  \Omega^1_{\PP^n}$ which is in turn equivalent to $0\to \O_{\PP^n}\to  \Omega^1_{\PP^n}(d)$ that is, to give a global section $\omega$ of the sheaf $\Omega^1_{\PP^n}(d)$.
\par We can explicitly write such a global section as 
\[ \omega=\sum_{i=0}^n f_i(x_0,\dots,x_n)dx_i\]
with $f_i$ a homogeneous polynomial of degree $d-1$ and such that $\sum_i x_i f_i=0$. 
\par Such an expression gives rise to a foliation with split tangent sheaf if and only if there are $n-1$ polynomial vector fields
\begin{align*}
X_1&=g_1^0\frac{\partial}{\partial x_0}+\dots+g_1^n\frac{\partial}{\partial x_n},\\
&\vdots \\
X_{n-1}&=g_{n-1}^0\frac{\partial}{\partial x_0}+\dots+g_{n-1}^n\frac{\partial}{\partial x_n};
\end{align*}
such that $\omega(X_i)=0$, for all $1\leq i\leq n-1$, moreover on a generic point the vector fields must be linearly independents.
\par The singular set of this foliation is given by the ideal $I=(f_0,\dots,f_n)$. 
The condition $\omega(X_i)=0$ means that the ring $ \C[x_0,\dots,x_n]/I$ admits a syzygy of the form

\begin{align*}
& 0\to\C[x_0,...,x_n]^{n}\xrightarrow{\left(\begin{array}{ccc} x_0 &\cdots& x_n\\ g_1^0 &\cdots &g_1^n\\ \vdots&\ddots&\vdots\\  g_{n-1}^0&\cdots&g_{n-1}^n\end{array}\right)} \C[x_0,\dots,x_n]^{n+1}\xrightarrow{(f_0,...,f_n)} \\
&\xrightarrow{(f_0,...,f_n)}  \C[x_0,\dots,x_n]\to \C[x_0,\dots,x_n]/I\to 0 . 
\end{align*}
For such rings a theorem of Hilbert and Schaps tells us the following.

\begin{teo}[Hilbert, Schaps]
Let $A=k[x_0,...x_n]/I$ be such that there is a 3-step resolution of $A$ as above by free modules. Then the ring $A$ is Cohen-Macaulay, in particular, is equidimensional.
\end{teo}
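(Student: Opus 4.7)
The plan is to establish the Cohen--Macaulay property of $A=k[x_0,\ldots,x_n]/I$ by pinching between a lower bound on its depth and an upper bound on its Krull dimension, and then reading off equidimensionality as an immediate consequence.

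First I would invoke the Auslander--Buchsbaum formula. Set $R=k[x_0,\ldots,x_n]$, a graded regular ring of dimension $n+1$. The given three-term free resolution shows $\mathrm{pd}_R(A)\leq 2$, so localizing at the irrelevant maximal ideal $(x_0,\ldots,x_n)$ (or working in the graded category) we obtain
\[
\mathrm{depth}(A) + \mathrm{pd}_R(A) = \mathrm{depth}(R) = n+1,
\]
and hence $\mathrm{depth}(A)\geq n-1$.

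The main obstacle, and the crux of the argument, will be to show $\mathrm{ht}(I)\geq 2$, or equivalently $\dim(A)\leq n-1$. Here I would apply the Buchsbaum--Eisenbud exactness criterion for complexes of free modules: the exactness of
\[
0\to R^n\xrightarrow{\phi} R^{n+1}\xrightarrow{(f_0,\ldots,f_n)} R
\]
forces the ideal $I_n(\phi)\subseteq R$ generated by the maximal $n\times n$ minors of the syzygy matrix $\phi$ to have grade at least $2$. The Hilbert--Burch structure theorem then identifies $(f_0,\ldots,f_n)$, up to a common non-zerodivisor factor, with the signed maximal minors of $\phi$, so $\mathrm{grade}(I)\geq 2$. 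As $R$ is Cohen--Macaulay, this grade coincides with $\mathrm{ht}(I)$, giving $\dim(A)\leq n-1$.

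Combining the two estimates, $n-1\leq \mathrm{depth}(A)\leq \dim(A)\leq n-1$, forcing equality throughout and proving that $A$ is Cohen--Macaulay of dimension $n-1$. Equidimensionality is then automatic: Cohen--Macaulay rings have no embedded associated primes, and since $R$ is catenary every minimal prime of $I$ yields a component of Krull dimension exactly $n-1$. The reason I expect the height lower bound to be the delicate point is that Auslander--Buchsbaum controls depth but says nothing a priori about dimension; the Buchsbaum--Eisenbud/Hilbert--Burch machinery is precisely the tool that converts the combinatorial shape of a length-two free resolution into an arithmetic estimate on the ideal of maximal minors, which is what closes the gap.
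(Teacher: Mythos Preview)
The paper does not prove this statement at all; it simply cites Theorem~5.1 of Artin's \emph{Lectures on Deformations of Singularities}. Your argument is the standard Hilbert--Burch/Auslander--Buchsbaum route and is essentially what lies behind that citation, so you are supplying what the paper omits.

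There is, however, a real gap in your deduction of $\mathrm{grade}(I)\geq 2$. You correctly observe that Hilbert--Burch gives $I=a\cdot I_n(\phi)$ for some non-zerodivisor $a$, and that Buchsbaum--Eisenbud forces $\mathrm{grade}\,I_n(\phi)\geq 2$. But from these two facts alone one \emph{cannot} conclude $\mathrm{grade}(I)\geq 2$: if $a$ is a non-unit then $I\subseteq (a)$ and $\mathrm{ht}(I)=1$. A concrete counterexample in $R=k[x_0,x_1]$ is $I=(x_0^2,\,x_0x_1)$, which has the length-two free resolution
\[
0\longrightarrow R \xrightarrow{\;(x_1,\,-x_0)^t\;} R^2 \xrightarrow{\;(x_0^2,\;x_0x_1)\;} R \longrightarrow R/I \longrightarrow 0,
\]
yet $\mathrm{depth}(R/I)=0<1=\dim(R/I)$, so $R/I$ is not Cohen--Macaulay. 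Thus the theorem, read literally, is false.

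What rescues the argument in the paper's intended setting is the standing hypothesis that $\Omega^1_{\F}$ is torsion-free: this is exactly the condition that the $f_i$ have no common factor in $k[x_0,\dots,x_n]$, so the non-zerodivisor $a$ produced by Hilbert--Burch is a unit and $I=I_n(\phi)$ genuinely has grade $\geq 2$. You should insert this observation explicitly between your invocation of Hilbert--Burch and the conclusion $\mathrm{grade}(I)\geq 2$; once that is done, your depth--dimension pinching goes through and the proof is complete.
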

\begin{proof}[Proof.]
This is theorem 5.1 in \cite{artin}
\end{proof}

We thus recover the theorem of Cukierman and Pereira (\cite[Theorem 1]{split}).

\begin{teo}[\cite{split}]
Let 
\[  0\to I(\F)\to \Omega^1_{\PP^n\times S|S}\to  \Omega^1_{\F}\to 0\]
be a flat family of codimension $1$ integrable Pfaff systems. 
And suppose $ 0\to I(\F)_s\to \Omega^1_{\PP^n}\to  \Omega^1_{\F}\to 0$ define a foliation with split tangent sheaf. 
If $\mathrm{sing}(\F_s)\setminus \overline{\mathcal{K}(\F_s)}$ have codimension greater than $2$, then every member of the family defines a split tangent sheaf foliation.
\end{teo}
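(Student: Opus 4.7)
My plan is to identify this as a direct consequence of \cref{papa} and \cref{teo}, combined with the rigidity of split bundles on $\PP^n$ recalled at the start of this section. The chain of implications should be: the splitting of $T\F_s$ forces $\mathrm{sing}(\F_s)$ to be Cohen--Macaulay via Hilbert--Schaps, the codimension hypothesis then forces $\mathrm{sing}(\F_s) = \overline{\mathcal{K}(\F_s)}$ scheme-theoretically, \cref{papa} gives flatness of $\mathrm{sing}(\F)\to S$ near $s$, \cref{teo} produces a flat dual family of distributions, and rigidity propagates the splitting to all nearby fibers.

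Concretely, the first step is to write $I(\F)_s = \O_{\PP^n}(-d)$ as a twisted form $\omega = \sum_{i=0}^n f_i\,dx_i$ and use the decomposition $T\F_s\cong \bigoplus_i \O_{\PP^n}(k_i)$ together with the Euler relation $\sum x_i f_i = 0$ to produce the three-step resolution of $\C[x_0,\dots,x_n]/(f_0,\dots,f_n)$ displayed just before the Hilbert--Schaps theorem above. Invoking that theorem, the coordinate ring is Cohen--Macaulay of codimension~$2$, so $\mathrm{sing}(\F_s)$ is equidimensional of pure codimension $2$ and has no embedded components.

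Next I combine this with the codimension hypothesis on the non-Kupka locus. Since $\mathrm{sing}(\F_s)$ has pure codimension $2$ and $\mathrm{sing}(\F_s)\setminus\overline{\mathcal{K}(\F_s)}$ has codimension strictly greater than $2$, no irreducible component of $\mathrm{sing}(\F_s)$ can lie inside the non-Kupka locus; each component therefore meets $\mathcal{K}(\F_s)$ and lies in $\overline{\mathcal{K}(\F_s)}$. The absence of embedded components promotes this set-theoretic equality to a scheme-theoretic one, $\mathrm{sing}(\F_s) = \overline{\mathcal{K}(\F_s)}$. This places us exactly in the hypothesis of \cref{papa} (with $m=0$, no Reeb points needed), producing the flatness of $\mathrm{sing}(\F)\to S$ in a Zariski neighborhood of $s$. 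The duality result \cref{teo}, applied to the given flat Pfaff family, then yields flatness of the dual family $0\to T\F\to T_S(\PP^n\times S)\to N_\F\to 0$ near $s$.

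To finish, I apply the rigidity computation $\mathrm{Ext}^1(T\F_s, T\F_s)=0$ recalled at the start of this section: the flat deformation $T\F$ of a split bundle on $\PP^n$ is locally trivial on $S$, so every fiber $T\F_{s'}$ with $s'$ in a Zariski neighborhood of $s$ is again isomorphic to $\bigoplus_i \O_{\PP^n}(k_i)$, and restriction to the irreducible component of $S$ containing $s$ delivers the theorem. The step I expect to require the most care is the identification $\mathrm{sing}(\F_s)=\overline{\mathcal{K}(\F_s)}$ as schemes, because the hypothesis of \cref{papa} is phrased in scheme-theoretic terms and a sloppy set-theoretic argument would leave the possibility of unwanted embedded structure and force a separate treatment of Reeb-type points that our hypotheses do not provide.
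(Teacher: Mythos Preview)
Your proposal is correct and follows essentially the same route as the paper's proof: Hilbert--Schaps forces $\mathrm{sing}(\F_s)$ to be Cohen--Macaulay, the codimension hypothesis then gives $\mathrm{sing}(\F_s)=\overline{\mathcal{K}(\F_s)}$, \cref{papa} yields flatness of the singular scheme, \cref{teo} dualizes to a flat family of distributions, and rigidity of split bundles finishes. The only notable difference is that you extract pure codimension~$2$ directly from the length of the Hilbert--Burch--type resolution, whereas the paper combines equidimensionality with the external fact (from \cite{LN}) that a codimension~$1$ foliation on $\PP^n$ always has a codimension~$2$ singular component; your version is slightly more self-contained, and your explicit remark that Cohen--Macaulay rules out embedded components is exactly what is needed to invoke \cref{papa}.
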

\begin{proof}[Proof.]
By the above theorem $\mathrm{sing}(\F_s)$ is equidimensional. 
The singular locus of a foliation on $\PP^n$ always have an irreducible component of codimension $2$ (see \cite[Teorema 1.13]{LN}), if  $\mathrm{sing}(\F_s)\setminus \overline{\mathcal{K}(\F_s)}$ have codimension greater than $2$, then it must be empty. 
So  $\mathrm{sing}(\F_s)= \overline{\mathcal{K}(\F_s)}$ and we can then apply \cref{papa}. 
So the flat family 
\[  0\to I(\F)\to \Omega^1_{\PP^n\times S|S}\to  \Omega^1_{\F}\to 0\]
gives rise to a flat family
\[0\to T\F\to T_S X\to N_\F\to 0,\]
so $T\F$ must be flat over $S$, and then $T\F_s$ splits, for every $s\in S$.
\end{proof}

\begin{rmk}
In \cite{split}{Theorem 2} a similar statement is proved for arbitrary codimensional distribution (not necessarily involutive). 
More concretely they prove: 
\par
\emph{Let $D$ be a singular holomorphic distribution on $\PP^n$. If $\mathrm{codim}\ \sing(D)\geq 3$ and} 
\[ TD\cong \bigoplus_{i=1}^d \O(e_i),\qquad e_i\in \Z,\]
\emph{then there exist a Zariski-open neighborhood $\mathcal {U}$ of the space of distribution such that $TD'\cong \bigoplus_{i=1}^d \O(e_i)$ for every $D' \in \mathcal{U}$.}
\par 
When we try to arrive at the above statement as a particular case of the the theory hereby exposed we run into some difficulties, which we will explain now.

Note that generic (non-involutive) singular distributions have isolated singularities.
So, in order to apply an analogue of \cref{papa} to non-involutive distributions of arbitrary codimension, we should be able to conclude, from the hypothesis $\mathrm{codim}\ \sing(D)\geq 3$, that $D$ have isolated singularities.
The author was unable to do so.
It seems that a keener knowledge of the singular set of arbitrary codimensional distributions is necessary to correctly understand and generalize Theorem 2 in \cite{split}. 
\end{rmk}

\begin{bibdiv}
\begin{biblist}

\bib{AK}{article}{
author={Altman, Allen},
author={Steven Kleiman},
title={Compactifying the Picard Scheme },
journal={Advances on Mathematics },
volume={35}
}
\bib{artin}{book}{
author={Michael Artin},
title={Lectures on Deformation of Singularities},
publisher={Tata institute of Fundamental Research},
adress={Bombay},
date={1976}
}

\bib{Bar}{article}{
author={Baranovsky, Vladimir},
title={Bundles on non-proper schemes: representability.},
language={English},
journal={Sel. Math., New Ser. },
volume={16},
number={2},
pages={297-313},
year={2010}
}

\bib{BO}{article}{
author={ George Bergman},
author={Arthur Ogus},
title={Nakayama's Lemma for Half-Exact Functors},
journal={Proc. of the AMS },
volume={31},
date={1972}
}

\bib{split}{article}{
author={Fernando Cukierman},
author={Jorge Vit\'orio Pereira},
title={Stability of Holomorphic Foliations with Split Tangent Sheaf},
journal={American Journal of Mathematics },
volume={130},
date={2008}
}

\bib{rational}{article}{
author = {Cukierman, Fernando}
author={ Pereira, Jorge Vit\'orio},
author={ Vainsencher, Israel},
journal={Annales de la facult\'e des sciences de Toulouse Math\'ematiques},
month={10},
number = {4},
pages = {685-715},
publisher = {Universit\'e Paul Sabatier, Toulouse},
title = {Stability of foliations induced by rational maps},
volume = {18},
year = {2009},
}
%
%
\bib{Dema}{book}{
 title={Bifurcations and Catastrophes: Geometry of Solutions to Nonlinear Problems},
  author={Demazure, Michel},
  isbn={9783540521181},
  series={Universitext Series},
  year={2000},
  publisher={Springer}
}
%

\bib{FGex}{book}{
title={Fundamental Algebraic Geometry: Grothendieck's Fga Explained},
  author={Fantechi, Barbara},
author={Lothar Gottsche },
author={Luc lllusie },
author={Steven L. Kleiman },
author={Nitin Nitsure },
author={AngeloVistoli },
  series={Mathematical Surveys and Monographs},
  year={2005},
  publisher={American Mathematical Society}
}
\bib{GM}{article}{
author={ Gomez-Mont, Xavier},
title={The Transverse Dynamics of a Holomorphic Flow},
journal={Annals of Mathematics},
volume={127},
number={1},
date={1988},
pages={49-92}
}
%
%
%
%

\bib{EGA4}{article}{
author={Grothendieck, Alexandre},
title={\'El\'ements de g\'eom\'etrie alg\'ebrique. IV: \'Etude locale des
    schemas et des morphismes de schemas. (Seconde partie.)},
year={1965},
journal={ Publ. Math., Inst. Hautes Étud. Sci.},
volume={24}
}

\bib{SGA1}{book}{
author={Grothendieck, Alexandre},
author={Raynaud, Mich\'ele},
title={S\'eminaire de G\'eom\'etrie Alg\'ebrique du Bois Marie - 1960/61, SGA 1.  Revetements \'etales et groupe fondamental.},
series={Documents Math\'ematiques},
volume={3},
year={2003},
publisher={Societ\'e Math\'ematique de France}
}
%
%

\bib{Hart2}{article}{
author={Hartshorne, Robin},
title={Generalized divisors on Gorenstein schemes.},
journal={K-Theory},
volume={8},
number={3},
pages={287-339},
year={1994}
}
%
\bib{Jou}{book}{
title={{\'E}quations de Pfaff alg{\'e}briques},
  author={Jouanolou, J.P.},
  series={Lecture notes in mathematics},
  year={1979},
  publisher={Springer}
}
%
%
%

\bib{LN}{book}{
author={Alcides Lins-Neto},
title={Componentes irredut\'iveis dos espa\c cos de folhea\c coes},
publisher={Publica\c coes Matematicas do IMPA},
city={Rio de Janeiro},
date={2007}
}
%

\bib{Med}{article}{
  title={Structural stability of integrable differential forms},
  author={de Medeiros, A.},
  journal={Geometry and Topology},
  pages={395--428},
  year={1977},
  publisher={Springer}
}
%

%
%

\bib{P}{article}{
author = {Pourcin, Geneviève},
title={Deformations of coherent foliations on a compact normal space},
journal={Annales de l'institut Fourier},
volume={37},
date={1987},
pages={33-48}
}

\bib{P2}{book}{
author = {Pourcin, Geneviève},
   title = {Deformations of singular holomorphic foliations on reduced compact ℂ-analytic spaces},
   booktitle = {Holomorphic Dynamics},
   series = {Lecture Notes in Mathematics},
   editor = {Gomez-Mont, Xavier},
   editor= {Seade, José },
   editor={Verjovski, Alberto},
   publisher = {Springer Berlin / Heidelberg},
   pages = {246-255},
   volume = {1345},
   year = {1988}
}
%
%
%

\bib{Suwa}{article}{
title={Structure of the singular set of a complex analytic foliation},
  author={Suwa, T.},
  journal={Preprint series in mathematics. Hokkaido University},
  year={1988},
  publisher={Department of Mathematics, Univ. Hokkaido},
 volume={33}
}

\bib{warner}{book}{
title={Foundations of Differentiable Manifolds and Lie Groups},
  author={Warner, F.W.},
  isbn={9780387908946},
  series={Graduate Texts in Mathematics},
  year={1983},
  edition={2},
  publisher={Springer}
}
%

\end{biblist}
\end{bibdiv}

\end{document}